
\documentclass[a4paper,french,twoside,11pt,centertags]{smfartVS}

\usepackage{smfenum,smfthm,amsmath,amssymb,mathrsfs,euscript,color}
\usepackage{stmaryrd,mathabx,upgreek}
\usepackage[latin1]{inputenc}
\usepackage[colorlinks]{hyperref}

\textheight 21cm
\textwidth 16cm
\setlength{\oddsidemargin}{0cm}
\setlength{\evensidemargin}{0cm}
\setlength{\topmargin}{1cm}

\numberwithin{equation}{section} 
\bibliographystyle{smfplain}
\theoremstyle{plain}

\newcounter{nonumber}

\newtheorem{theon}[nonumber]{Théorème}







\def\NN{\mathbf{N}}

\def\ZZ{\mathbf{Z}} 


\def\D{{\rm D}}

\def\F{{\rm F}}
\def\G{{\rm G}}
\def\H{{\rm H}}
\def\I{{\rm I}}
\def\J{{\rm J}}

\def\L{{\rm L}}
\def\M{{\rm M}}
\def\N{{\rm N}}

\def\P{{\rm P}}
\def\Q{{\rm Q}}
\def\R{{\rm R}}
\def\SS{{\rm S}}

\def\U{{\rm U}}
\def\V{{\rm V}}
\def\W{{\rm W}}
\def\X{{\rm X}}

\def\Z{{\rm Z}}


\def\Dd{\mathscr{D}}
\def\Ee{\mathscr{E}}

\def\Hh{\mathscr{H}}

\def\Ss{\mathscr{S}}


\def\AA{\mathfrak{A}}



\def\a{\alpha} 
\def\b{\beta}

\def\g{\gamma}

\def\k{\kappa}
\def\l{\lambda}
\def\m{\mathfrak{m}}

\def\om{\omega}

\def\s{\sigma} 


\def\Om{\Omega}
\def\Si{\Sigma}


\def\ie{c'est-à-dire }

\def\rp{\rangle}
\def\>{\geqslant}
\def\<{\leqslant}
\def\odo{\otimes\dots\otimes}
\def\tdt{\times\dots\times}


\def\Hom{{\rm Hom}}
\def\End{{\rm End}}

\def\Mat{\mathscr{M}}
\def\GL{{\rm GL}}

\def\Ind{{\rm Ind}}
\def\ind{{\rm ind}}

\def\mult#1{{#1}^{\times}}


\def\blue#1{\textcolor{blue}{#1}}


\def\Dd{\EuScript{D}}
\def\Hh{\EuScript{H}}

\def\Ss{\EuScript{S}}


\def\AA{\EuScript{A}}
\def\CA{\EuScript{C}}

\def\XA{{\Irr}}
\def\IA{\EuScript{I}}
\def\JA{\EuScript{J}}
\def\MA{\EuScript{M}}

\def\RA{\EuScript{R}} 
\def\SA{\EuScript{S}}

\def\FB{\boldsymbol{k}}
\def\GB{\mathscr{G}}
\def\PB{\mathscr{P}}

\def\MB{\mathscr{M}}

\def\Div{\ZZ}
\def\Dive{\NN}


\def\ss{\mathfrak{s}}


\def\CR{{\rm R}}

\def\AC{\textbf{\textsf{D}}}
\def\AZ{\textbf{\textsf{D}}}
\def\EC{\textbf{\textsf{E}}}

\def\Irr{{\rm Irr}}

\def\cusp{{\rm cusp}}
\def\scusp{{\rm scusp}}

\def\ip{\boldsymbol{i}}

\def\rp{\boldsymbol{r}}
\def\r{{\bf{r}}}


\def\st{{\rm st}}


\def\BJ{{\bf J}}

\def\bk{\boldsymbol{\k}}

\def\bt{\boldsymbol{\uptau}}
\def\ac{\textbf{\textsf{d}}} 
\def\KM{\textbf{\textsf{K}}}
\def\FM{\textbf{\textsf{F}}}
\def\fm{\textbf{\textsf{f}}}
\def\tm{\textbf{\textsf{t}}}

\def\inv{\bt}

\def\qr{q_{\s}}
\def\nr{\nu_{\s}}
\def\sy#1{\boldsymbol{[}#1\boldsymbol{]}}
\def\({\left(}
\def\){\right)}

\def\flb{{\overline{\mathbf{F}}_\ell}}
\def\qlb{{\overline{\mathbf{Q}}_\ell}}
\def\zlb{{\overline{\mathbf{Z}}_\ell}}

\newcounter{intronum}
\def\theintronum{\arabic{intronum}}
\newenvironment{introsec}{\refstepcounter{intronum}
\noindent{\bf\theintronum.}}{\medskip}

\newcounter{notanum}
\def\thenotanum{\arabic{notanum}}
\newenvironment{notasec}{\refstepcounter{notanum}
\noindent{\bf\thenotanum.}}{\medskip}


 \author{Alberto M\'\i nguez}
 \address{Institut de Mathématiques de Jussieu, Université Paris 6, 
 4 place Jussieu, 75005, Paris, France. 
 URL: {\rm http://www.math.jussieu.fr/$\sim$minguez/}} 
 \email{minguez@math.jussieu.fr}

 \author{Vincent Sécherre}
 \address{Université de Versailles Saint-Quentin-en-Yvelines\\
 Laboratoire de Mathémati\-ques de Versailles\\
 45 avenue des Etats-Unis\\
 78035 Versailles cedex, France}
 \email{vincent.secherre@math.uvsq.fr}


\title{L'involution de Zelevinski modulo $\ell$}

\begin{altabstract}
Let $\F$ be a non-Archimedean locally compact field with residual characteristic 
$p$, let $\G$ be an inner form of $\GL_n(\F)$, $n\>1$ and let $\R$ be an 
algebraically closed field of characteristic diffe\-rent from $p$.
When $\R$ has characteristic $\ell>0$, the image of an irreducible smooth 
$\R$-representation $\pi$ of $\G$ by the Aubert involution need not be 
irreducible.
We prove that this image (in the Grothen\-dieck group of $\G$)
contains a uni\-que irreducible term $\pi^\star$ with the same 
cuspidal support as $\pi$. 
This defines an involution $\pi\mapsto\pi^\star$ on the set of isomorphism
classes of irreducible $\R$-representations of $\G$, that coincides with the 
Zelevinski involution when $\R$ is the field of complex numbers. 
The method we use also works for $\F$ a finite field of characteristic $p$,
in which case we get a similar result. 
\end{altabstract}


\long\def\MSC#1\EndMSC{\def\arg{#1}\ifx\arg\empty\relax\else
     {\par\narrower\noindent%
     2010 Mathematics Subject Classification: #1\par}\fi}

\long\def\KEY#1\EndKEY{\def\arg{#1}\ifx\arg\empty\relax\else
	{\par\narrower\noindent Keywords and Phrases: #1\par}\fi}


\linespread{1}


\begin{document}

\maketitle


\MSC 
22E50, 20G40
\EndMSC

\KEY 
Modular representations, $p$-adic reductive groups, 
finite reductive groups, Zelevinski involution, Alvis-Curtis duality, 
type theory 
\EndKEY


\section*{Introduction}

\begin{introsec}
Soit $\F$ un corps localement compact non archimédien, 
de caractéristique résiduelle $p$. 
Zelevinski \cite{Ze1} a défini une involution sur le groupe de 
Grothendieck des représentations complexes de longueur finie 
de $\GL_n(\F)$, pour $n\>1$, 
et a conjecturé que cette involution préserve l'irréduc\-ti\-bi\-li\-té. 
Moeglin et Waldspurger \cite{MW} ont prouvé cette conjecture 
pour les représentations irréducti\-bles de $\GL_n(\F)$ possédant 
un vecteur non nul invariant par un sous-groupe d'Iwahori. 
Précisé\-ment, ils ont montré que, 
si l'on applique le foncteur des invariants par un sous-groupe d'Iwahori, 
l'involution de Zelevinski se transforme, pour les modules sur 
l'algèbre de Hecke-Iwahori, 
en la torsion par une involution de cette algèbre. 
\end{introsec}

\begin{introsec}
En s'inspirant de la dualité d'Alvis-Curtis \cite{Alvis1,Alvis2,Curtis}, 
S.-I.~Kato
\cite{Kato} a défini une involution sur le groupe de 
Grothendieck des représentations complexes de longueur finie 
(et engendrées par leurs vecteurs invariants sous un sous-groupe d'Iwahori) 
d'un groupe réductif $p$-adique déployé.
Comme dans \cite{MW}, il utilise les propriétés du foncteur 
des invariants par un sous-groupe d'Iwahori \cite{Borel} pour 
prouver que cette involution préserve l'irréductibilité à un signe près.
\end{introsec}

\begin{introsec}
Aubert \cite{Aubert} a montré que la définition de Kato permet 
d'obtenir une involution sur le groupe de 
Grothendieck des représentations complexes de longueur finie 
de n'importe quel groupe réductif $p$-adique, et a prouvé que 
cette involution préserve l'irréductibilité à un signe près.
Dans le cas du groupe $\GL_n(\F)$, elle coïncide avec
l'involution de Zelevinski à un signe près, 
ce qui prouve la con\-jecture d'irréductibilité de 
Zelevinski pour toutes les représentations irréductibles complexes de 
$\GL_n(\F)$.
Procter \cite{Procter} avait déjà prouvé cette conjecture peu de temps 
auparavant 
au moyen de la théorie des types de Bushnell-Kutzko \cite{BK}, 
ce qu'on peut voir comme une géné\-ralisation de l'approche de 
Moeglin et Waldspurger à n'importe quel bloc de Bernstein de $\GL_n(\F)$.
\end{introsec}

\begin{introsec}
Au moyen de la théorie des systèmes de coefficients sur l'immeuble de 
Bruhat-Tits, 
Schnei\-der et Stuh\-ler \cite{SSihes} ont défini eux aussi une dualité
pour les représentations comple\-xes 
de longueur fi\-nie d'un groupe réduc\-tif $p$-adique. 
Ils prouvent qu'elle préserve l'irré\-ductibilité et qu'elle coïncide à un
signe près, au ni\-veau des groupes 
de Grothendieck, avec l'involution d'Au\-bert. 
On trouve une autre approche dans Bezrukavnikov \cite{Bez}, 
esquissée dans Bernstein \cite[IV.5.1]{Bnotes}.
\end{introsec}

\begin{introsec}
Vignéras \cite{VigZ} étend la question aux représentations des groupes 
réductifs $p$-adiques à coeffi\-cients dans un corps 
de caractéristique $\ell$ différente de $p$.
Dans ce contexte, la définition d'Aubert a toujours un sens et, 
pourvu que le groupe ait des 
sous-groupes discrets cocompacts (auquel cas la conjec\-tu\-re d'irréductibilité
générique est prouvée \cite{Datf}), elle définit toujours une involution \cite{MSb}. 
En revanche, quand $\ell$ est un nom\-bre premier non banal, 
il est facile de voir que cette involution ne préserve pas l'irréductibilité, 
même à un signe près. 
Quant à la défini\-tion de Zelevinski pour $\GL_n(\F)$, elle a toujours un sens 
elle aussi \cite{MSb} mais 
ne définit pas un automorphisme involutif ni ne 
préserve l'irré\-duc\-tibilité dans le cas non banal 
(remarque \ref{ZeleNonInv}). 
Dans le cas banal par contre, voir \cite{MSb} où l'on traite le cas de
$\GL_n(\F)$ et de ses formes intérieures.
\end{introsec}

\begin{introsec}
Reprenant l'approche de Schnei\-der-Stuh\-ler dans le contexte modulaire, 
Vignéras \cite{VigZ} prouve -- au moins lorsque le groupe a des 
sous-groupes discrets cocompacts -- que l'involution d'Aubert et celle de 
Schneider-Stuhler coïncident à un signe près au niveau des groupes 
de Grothendieck~et que, si $\pi$ est une repré\-sen\-tation 
irréductible d'un groupe réductif $p$-adique, son image par cette involution 
possède un seul terme irréductible de même support cuspidal que $\pi$. 
En outre, elle mon\-tre que ce terme irréductible est 
caractérisé comme étant l'unique quotient irréductible 
d'un certain espace de cohomologie associé à $\pi$
(voir \cite[Theorem 4.6]{VigZ}).
\end{introsec}

\begin{introsec}
\label{intro7}
Dans cet article, nous donnons une autre preuve du résultat de Vignéras pour 
$\GL_n(\F)$, $n\>1$ et ses for\-mes intérieures, suivant une 
approche fondée sur la théorie des types de Bushnell-Kutzko
dans l'esprit de \cite{Procter}. 
Ce travail se situe dans la continuité de nos travaux sur les 
représentations modulaires des formes intérieures de $\GL_n(\F)$
dans lesquels la théorie des types \cite{MSt} jour l'un des rôles principaux. 
Il forme avec \cite{MSb,MSc,MSt} 
un ensemble homogène décrivant de façon cohérente la théorie des 
représentations modulo $\ell\neq p$ des formes intérieures de $\GL_n(\F)$.
Dans le cas des représentations complexes, 
les liens unissant la théorie des systèmes de 
coefficients et la théorie des types ont été explorés dans 
\cite{BrAc,BrSch}. 
Nous décrivons notre stratégie ci-dessous.
\end{introsec}

\begin{introsec}
Soit $\G$ une forme intérieure de $\GL_n(\F)$, 
\ie un groupe de la forme $\GL_{m}(\D)$ où $m$ est un diviseur de $n$ et 
$\D$ une $\F$-algèbre à division centrale de degré réduit $d$ tels que 
$md=n$, soit $\R$ un corps algébriquement clos 
de caractéristique différente de $p$ 
et soit $\AC$ l'invo\-lution d'Aubert
(voir le paragraphe \ref{DefASS})
sur le groupe de Grothendieck des repré\-sen\-ta\-tions de lon\-gueur fi\-nie 
de $\G$ à coef\-ficients dans $\R$.
Notre résultat prin\-ci\-pal est le suivant (théorème \ref{ASG}). 

\begin{theon}
Soit $\pi$ une $\R$-représentation irréductible de $\G$, 
et soit $r(\pi)$ le nombre de termes de son support cuspidal. 
Il y a une unique repré\-sentation irréducti\-ble $\pi^\star$ de $\G$,
de même support cuspidal que $\pi$, telle que la quantité~: 
\begin{equation*}
\AC(\pi)- (-1)^{r(\pi)} \cdot \pi^\star 
\end{equation*}
dans le groupe de Grothendieck de $\G$
ne contienne pas de terme irréductible de même support cuspidal que $\pi$.
\end{theon}
\end{introsec}

\begin{introsec}
La première étape de notre démonstration consiste à se ramener -- par des 
raisonnements sur les supports cuspidal et supercuspidal -- au cas d'une 
représentation irréductible dont le support cuspidal est de la forme 
$\s_1+\dots+\s_r$, où $\s_1,\dots,\s_r$ sont des représentations 
irréductibles supercuspidales inertiellement équivalentes à une même 
représentation irréductible supercuspidale $\s$ de $\GL_{k}(\D)$ avec $kr=m$. 
Dans cette situation, la théorie des types de Bushnell-Kutzko fournit 
un foncteur $\FM$ de la catégorie des $\R$-représentations de $\G$ vers celle 
des modu\-les à droite sur une algèbre de Hecke $\Hh(\s,r)$. 
Contrairement à ce qui se passe en caractéristique nulle 
(voir \cite{BK,VS3}) ce foncteur n'est en général pas exact quand $\R$ est de
caractéristique $\ell>0$. 
Il est représentable par une représentation de type fini $\Q$ de $\G$ 
qui n'est en général pas projective dans la catégorie des $\R$-représentations 
de $\G$, mais qui est quasi-projective, ce qui entraîne les propriétés suivantes 
(voir \cite{MSt} et le paragraphe \ref{RepMod})~:
\begin{enumerate}
\item
$\FM$ induit une bijection entre les classes de représentations
irréduc\-ti\-bles de $\G$ dont le support cuspidal est de la forme~:
\begin{equation}
\label{OMS}
\s_1+\dots+\s_r,
\quad
\text{$\s_1,\dots,\s_r $ supercuspidales et inertiellement équivalentes à $\s$}
\end{equation}
et les $\Hh(\s,r)$-modules à droite simples~;
\item
$\FM$ est exact sur la sous-catégorie pleine $\Ee(\s,r)$ dont les objets sont 
les sous-quotients de sommes directes arbitraires de copies de $\Q$.
\end{enumerate}
\end{introsec}

\begin{introsec}
\label{intro9} 
Soit $\RA_{\s,r}$ le groupe de Grothendieck des représentations de longueur 
finie de $\Ee(\s,r)$, \ie le groupe abélien libre engendré par les classes de 
représentations irréduc\-ti\-bles de $\G$ dont le support supercuspidal est de 
la forme \eqref{OMS}.
Le foncteur $\FM$ induit un morphisme de $\RA_{\s,r}$ vers le groupe abélien
libre $\MA_{\s,r}$ engendré par les $\Hh(\s,r)$-modules à droite simples.
Notant $\RA_{\s}$ la somme directe des $\RA_{\s,r}$, $r\>0$ et définissant 
$\MA_{\s}$ de façon analogue, on obtient un morphisme de groupes de 
$\RA_{\s}$ vers $\MA_{\s}$, que l'on note encore $\FM$.
Son noyau, noté $\JA_{\s}$, est engendré par les classes de représentations
irréduc\-ti\-bles dans $\RA_{\s}$ dont le support super\-cuspidal est différent 
du support cuspidal. 
Ainsi, le théorème principal du paragraphe \ref{intro7} ci-dessus peut être reformulé 
comme suit (voir le paragraphe \ref{ConcPart}). 

\begin{theon}
Soit $\pi$ une $\R$-représentation irréductible dont le support supercuspidal 
est de la forme \eqref{OMS}.
Alors $(-1)^r\cdot\FM(\AC(\pi))$ est un module simple dans $\MA_{\s}$.
\end{theon}

La représentation irréductible $\pi^\star$ correspondant
bijectivement par $\FM$ à ce module simple
est l'unique repré\-sentation irréducti\-ble de même support cuspidal que
$\pi$ telle que $\AC(\pi)- (-1)^{r} \cdot \pi^\star$
ne contienne pas de terme irréductible de même support cuspidal que $\pi$.
\end{introsec}

\begin{introsec}
Passant maintenant au quotient, $\FM$ induit un isomorphisme de groupes $\fm$ de 
$\AA_{\s}$, le quotient de $\RA_{\s}$ par $\JA_{\s}$, vers $\MA_{\s}$.
L'involution $\AC$, laissant stable $\RA_{\s}$ et $\JA_{\s}$ d'après le 
corollaire \ref{astablej}, induit une involution $\ac$ sur $\AA_{\s}$.
Pour prouver le théorème du paragraphe \ref{intro9},
il s'agit de calculer l'iso\-mor\-phis\-me $\fm\circ\ac$ et pour cela de 
déterminer le comportement du foncteur $\FM$ vis-à-vis de l'induction et de la 
restriction paraboliques~:
c'est ce que nous faisons dans la section \ref{SEC3} (voir le 
co\-rol\-laire \ref{stableEsn} et la remarque \ref{vaval}).
Nous y montrons aussi que, pour sous-groupe parabolique $\P$ de $\G$ et tout 
facteur de Levi $\M$ de $\P$, 
le foncteur $\ip_{\M,\P}\circ\rp_{\M,\P}$ composé de l'induction et de la 
restriction paraboliques de $\M$ à $\G$ le long de $\P$ laisse stable la 
sous-catégorie $\Ee(\s,r)$ sur laquelle le foncteur $\FM$ est exact.
L'involution $\AC$ étant définie comme une somme alternée de tels foncteurs
(dans le groupe de Grothendieck), 
ceci nous permet d'obtenir une formule pour 
$\fm\circ\ac$ du côté de $\MA_{\s}$, ne dépendant que d'un invariant 
$q_{\s}\in\R$ associé à la représentation supercuspidale $\s$ (voir \eqref{blo} et
le paragraphe \ref{FormuleRaviolisVapeur}).
Grâce au principe du changement de groupe, cette formule permet de 
réduire le problème au cas où $\s$ est le caractère trivial de $\GL_1(\F')$ 
pour une extension finie convenable $\F'$ de $\F$,
ce que nous faisons au paragraphe \ref{GreatExpectations}.
\end{introsec}

\begin{introsec}
\label{intro11} 
Il ne reste maintenant plus qu'à prouver le théorème du paragraphe 
\ref{intro9} dans le cas où $\s$ est le caractère trivial de $\GL_1(\F)$ 
pour un corps localement compact non archimédien $\F$ quelconque, 
noté simplement $1$. 
L'induction parabolique définit une multiplication sur $\AA_{\s}$,
ce qui en fait une $\ZZ$-algèbre commutative (\S\ref{GeEm}).
On a une structure analogue de $\ZZ$-algèbre commutative sur $\MA_{\s}$, 
et les résultats de la section \ref{SEC3} montrent que $\fm$ 
est alors un isomorphisme d'algèbres. 
D'autre part l'algèbre de Hecke $\Hh(\s,r)$ est une algèbre de 
Hecke affine de type \textsf{A} naturellement munie d'un automorphisme 
involutif d'algèbre $\inv$ (voir le paragraphe \ref{hecke}).
La torsion des modules par cette involution définit une involution sur
$\MA_\s$, encore
notée $\inv$.
Leclerc, Thi\-bon et Vasse\-rot \cite{LTV} ont établi un algorithme permettant 
de calculer l'image par $\inv$ d'un module simple 
en déterminant le multi\-seg\-ment
apériodique lui correspondant.
Nous prouvons le résultat suivant, qui implique et précise le théorème du 
paragraphe \ref{intro9}. 

\begin{theon}
Soit $\pi$ une $\R$-représentation irréductible dont le support supercuspidal 
est de la forme \eqref{OMS}.
Alors $(-1)^r\cdot\FM(\AC(\pi))$ est égal au module simple $\inv(\FM(\pi))$.
\end{theon}
\end{introsec}

\begin{introsec}
Pour tout $\Hh(\s,r)$-module simple $\m$, posons 
$\tm(\m)=(-1)^r\cdot\inv(\m)$. 
Prolongeant par linéarité, on obtient un automorphisme involutif d'algèbre sur 
$\MA_\s$. 
Pour prouver que les isomorphismes d'algèbres $\fm\circ\ac$ et 
$\tm\circ\fm$ sont égaux, il suffit de prouver qu'ils coïncident sur un 
système générateur de $\AA_\s$.
Un système générateur bien adapté au problème est fourni par la base standard 
(voir le théorème \ref{basestd} ou \cite[Lemme 9.41]{MSc}).
Grâce à la propriété de multiplicativité de la base standard, 
il suffit de comparer $\fm\circ\ac$ et 
$\tm\circ\fm$ sur les représentations $\Z(\s,r)$ associées à des segments
(voir le paragraphe \ref{segments}).
\end{introsec}

\begin{introsec}
Le calcul de $\fm\circ\ac(\Z(\s,r))$ se fait d'abord quand $\s$ est le 
caractère trivial de $\mult\F$ 
-- auquel cas $\Z(1,r)$ n'est autre que le caractère trivial de $\GL_r(\F)$ -- 
grâce à un argument de relèvement à la 
caractéristique nulle (voir les paragraphes \ref{FormuleDZ} et \ref{CasUnip}). 
Ceci implique le théorème du paragraphe \ref{intro11} pour $\s$ trivial, 
lui-même impliquant (grâce à la méthode de changement de groupe) 
ce même théorème pour $\s$ quelconque. 
Ceci enfin implique en retour le théorème du paragraphe \ref{intro11} pour $\s$ 
quelconque (voir le paragraphe \ref{CasG}). 
\end{introsec}

\begin{introsec}
Terminons cette introduction par trois remarques. 
D'abord, notre méthode fonctionne aussi bien pour un corps $\F$ 
localement compact non archimédien de caractéristique résiduelle $p$ 
que pour un corps fini de carac\-té\-ris\-tique $p$. 
De fait, l'article est rédigé de façon uniforme en $\F$, 
qu'il soit fini ou $p$-adique.
Le seul passage où le cas fini nécessite d'être traité avant le cas $p$-adique 
est le début de la section \ref{SEC3} où nous étudions le comportement du
foncteur $\FM$ vis-à-vis de l'induction et de la restriction paraboliques. 
Dans le cas où $\F$ est fini, nos résultats généralisent un résultat de 
Ackermann et Schroll \cite{AS} qui traitent le cas unipotent, \ie le cas 
où $\s$ est trivial. 
\end{introsec}

\begin{introsec}
La seconde remarque concerne la section \ref{SEC3} dont l'intérêt, 
du point de vue de la théorie des types, dépasse le cadre de cet article. 
Le calcul des coinvariants de $\Q$ relativement au radical 
uni\-potent d'un sous-groupe parabolique effectué au paragraphe \ref{sec:Types} 
éclaire cer\-taines ques\-tions sou\-le\-vées ou partiellement résolues dans
la section $4$ de \cite{MSt}.
Notamment, le corollaire \ref{stableEsn} étend le domaine de validité de 
\cite[Corollaire 4.31]{MSt}
et le corollaire \ref{Tet} 
(qu'on pourrait raffiner en dé\-fi\-nissant une 
structure de bigèbre au moyen de la restriction parabolique) 
montre qu'un certain nombre de résultats 
impliquant induction et restriction paraboliques peuvent être transportés -- 
via la méthode de changement de groupe -- d'un $\s$ à un autre de même 
invariant $q_\s$.
Ceci sera utile dans des travaux ultérieurs. 
\end{introsec}

\begin{introsec}
Notre troisième et dernière remarque porte sur le calcul des multiplicités des 
représentations ir\-ré\-ductibles dans les représentations standards 
\eqref{defRSTD}.
Si $\pi$ est une $\R$-représentation irréductible dont le support 
cuspidal est de la forme \eqref{OMS}, nous montrons que sa multiplicité 
dans une re\-pré\-sentation standard ne dépend de $\s$ que par le biais de 
$q_\s$. 
Il s'ensuit grâce à \cite{CG,AM} que cette multiplicité ne dépend de $\s$ que 
par le biais de l'entier $e(\s)$ défini par \eqref{DEFe}.
Le fait était bien connu -- au moins dans le cas complexe -- mais n'était 
à notre connaissance écrit nulle part dans la littérature. 
Nous avons profité de l'appareil technique mis en place dans cet article pour 
le fai\-re. 
Nous remercions P.~Boyer, E.~Lapid, B.~Leclerc et M.~Tadi\'c de nous y avoir 
incité. 
\end{introsec}

\section*{Notations et conventions}

\begin{notasec}
\label{NotCorps}
On note 
$\NN$ l'ensemble des entiers naturels et
$\ZZ$ l'anneau des entiers relatifs.
\end{notasec}

\begin{notasec}
\label{in:partition}
Une \textit{composition} d'un entier $n\>1$ est une famille finie d'entiers 
$>0$ de som\-me $n$. 
\end{notasec}

\begin{notasec}
Pour un ensemble $\X$, on note $\ZZ(\X)$ le groupe 
abélien libre de base $\X$ constitué des applications de $\X$ dans $\ZZ$ 
à support fini et $\NN(\X)$ le sous-ensemble de $\ZZ(\X)$ 
constitué des applications à valeurs dans $\NN$.
Si $f,g\in\ZZ(\X)$, on note $f\<g$ si $g-f\in\NN(\X)$, 
ce qui définit une relation d'ordre partiel sur $\ZZ(\X)$.
\end{notasec}

\begin{notasec}
Dans tout cet article, $p$ est un nombre premier et 
$\R$ un corps algébriquement clos de caracté\-ristique dif\-fé\-rente de 
$p$.  
\end{notasec}

\begin{notasec}
Une $\R$-{\it re\-pré\-sen\-ta\-tion lisse} d'un groupe localement profini 
$\G$ est un mor\-phisme de groupes de 
$\G$ dans $\GL(\V)$, où $\V$ est un espace vectoriel sur $\R$, 
tel que tout vecteur de $\V$ ait un stabilisateur ouvert dans $\G$.
{\rm Dans cet article, toutes les représentations sont des 
  $\R$-re\-pré\-sen\-ta\-tions lisses.} 
Un $\R$-{\it caractère} de $\G$ est un mor\-phis\-me de $\G$ vers $\mult\R$ 
de noyau ouvert. 
Si aucune confusion n'est à craindre, on écrira \textit{caractère} et 
\textit{re\-présentation} plutôt que $\R$-carac\-tè\-re et 
$\R$-re\-pré\-sen\-ta\-tion. 
\end{notasec}

\begin{notasec}
Si $\pi$ est une représentation et si $\chi$ est un caractère de $\G$, 
on note $\pi\chi$ la représentation tordue définie par 
$g\mapsto\chi(g)\pi(g)$.
\end{notasec}


\begin{notasec}
On note $\Irr(\G,\R)$
l'ensemble des classes d'iso\-mor\-phisme des
re\-pré\-sen\-ta\-tions irréductibles de $\G$ et $\RA(\G,\R)$ le groupe 
de Gro\-then\-dieck de ses re\-pré\-sen\-ta\-tions de longueur finie, qui 
s'identifie au groupe abélien libre $\Div(\XA(\G,\R))$.
Le plus souvent, on omet\-tra $\R$ dans les notations. 
\end{notasec}

\begin{notasec}
Si $\pi$ est une représentation de longueur finie de $\G$, on désigne par 
$\sy{\pi}$ son image dans $\RA(\G)$.
En particulier, si $\pi$ est irréductible, $\sy{\pi}$ désigne 
sa classe d'isomorphisme. 
Lorsqu'aucune confusion ne sera possible, il nous arrivera d'identifier 
une représentation avec sa classe d'isomorphisme.
\end{notasec}

\begin{notasec}
\label{PF}
Dans cet article, $\F$ désigne~: 
\begin{itemize}
\item
ou bien un corps fini de caractéristique $p$, 
de cardinal noté $q=p^r$, $r\>1$, (et on dira qu'on est dans le \textit{cas fini});
\item
ou bien un corps localement compact non archi\-médien de corps 
résiduel de cardinal $q=p^r$, $r\>1$ (et on dira qu'on est dans le 
\textit{cas $p$-adique}). 
\end{itemize}
\end{notasec}

\begin{notasec}
On fixe une $\F$-algèbre à division centrale de dimension finie $\D$, 
de degré réduit noté $d$. 
Dans le cas fini, on a $d=1$ et $\D$ est égale à $\F$.
Pour tout entier $m\>1$, on note $\Mat_{m}(\D)$ la 
$\F$-algèbre des matrices carrées de taille $m$ à coefficients dans $\D$, 
et on note $\G_{m}$ le groupe $\GL_{m}(\D)$ de ses élé\-ments 
in\-ver\-si\-bles. 
Il est commode de convenir que $\G_{0}$ est le groupe trivial. 
La topologie sur $\F$ 
induit sur $\G_m$ une topologie en faisant un groupe localement profini. 
(Dans le cas fini, c'est la topologie discrète.)
\end{notasec}

\begin{notasec}
On note 
$|\ |_{\F}$ la valeur absolue nor\-malisée sur $\F$.
Dans le cas $p$-adique, c'est la valeur absolue
don\-nant à une uniformisante de $\F$ la valeur $q^{-1}$.
Dans le cas fini, c'est la valeur absolue triviale. 
Comme l'image de $q$ dans $\R$ est in\-ver\-si\-ble, 
elle définit un $\R$-caractère de $\mult\F$ noté $|\ |_{\F,\R}$.
Si l'on note $\N_{m}$ la norme réduite de $\Mat_{m}(\D)$ sur $\F$, 
l'ap\-pli\-ca\-tion $g\mapsto|\N_{m}(g)|_{\F,\R}$ est un 
$\R$-ca\-rac\-tè\-re de $\G_{m}$, qu'on notera simplement 
$\nu$. 
Dans le cas fini, $\nu$ est donc le ca\-rac\-tè\-re trivial de $\G_{m}$. 
\end{notasec}

\begin{notasec}
On note $\Irr$ la réunion des $\Irr(\G_{m})$ 
et $\RA$ la somme directe des $\RA(\G_m)$, pour $m\>0$.
Celle-ci s'identifie au groupe abélien $\Div(\Irr)$. 
Pour une représentation de lon\-gueur finie $\pi$ de $\G_m$, 
on pose $\deg(\pi)=m$, qu'on ap\-pel\-le le {\it degré} de $\pi$. 
L'application $\deg$ fait de $\RA$ un $\ZZ$-module gradué.
\end{notasec}

\section{Préliminaires} 

Pour plus de détails sur les résultats de cette section nous renvoyons le 
lecteur à \cite{MSc} dans le cas $p$-adique et à \cite{MSf} dans le cas fini. 

\subsection{Induction et restriction paraboliques}
\label{GeEm}

Si $\a=(m_{1},\ldots,m_{r})$ est une composition de $m$, 
il lui correspond le 
sous-groupe de Levi standard $\M_{\a}$ de $\G_{m}$ constitué des matrices 
diagonales par blocs de tailles $m_{1},\ldots,m_{r}$ respectivement, que 
l'on identifie naturellement au produit 
$\G_{m_{1}}\times\cdots\times\G_{m_{r}}$. 
On note $\P_{\a}$ 
le sous-groupe para\-bo\-li\-que de $\G_{m}$ de facteur de Levi 
$\M_{\a}$ formé des matrices tri\-an\-gu\-lai\-res supérieures 
par blocs de tailles $m_{1},\ldots,m_{r}$ respectivement, et on note 
$\U_{\a}$ 
son radical unipotent.

On fixe une racine carrée de $q$ dans $\R$.
On note $\ip_\a$ le foncteur d'in\-duc\-tion para\-bo\-li\-que
(norma\-li\-sé, dans le cas $p$-adique, 
relativement au choix de cette racine)
de $\M_{\a}$ à $\G_{m}$ le long de $\P_{\a}$,
et on note 
$\rp_\a$ son adjoint à gau\-che, \ie le foncteur de restriction 
para\-bo\-li\-que lui correspondant.
Ces foncteurs sont exacts,
et préservent l'admissibilité et le fait d'être de longueur finie.

Si, pour chaque $i\in\{1,\ldots,r\}$, on a une 
représentation $\pi_{i}$ de $\G_{m_i}$, on note~: 
\begin{equation}
\label{VentreDieu}
\pi_1\times\cdots\times\pi_r=\ip_{\a}(\pi_1\otimes\cdots\otimes\pi_r).
\end{equation}
Si les $\pi_{i}$ sont de longueur finie, 
la représentation semi-simplifiée 
$\sy{\pi_1\times\cdots\times\pi_r}$
ne dépend que de $\sy{\pi_{1}},\dots,\sy{\pi_{r}}$.
L'ap\-pli\-cation~:
\begin{equation*}
\label{VentreDieuGris}
(\sy{\pi_1},\dots,\sy{\pi_r})\mapsto\sy{\pi_1\times\cdots\times\pi_r}
\end{equation*}
induit par linéarité une application linéaire 
de $\RA(\G_{m_1})\times\dots\times\RA(\G_{m_r})$ dans 
$\RA(\G_{m})$,
faisant de $\RA$ une $\ZZ$-al\-gè\-bre commutative 
(voir \cite[Proposition 2.6]{MSc} dans le cas $p$-adique)
gra\-duée. 

\subsection{Représentations cuspidales et supercuspidales}
\label{DefRepCusp}

Une représentation irréduc\-tible de $\G_m$, $m\>1$, 
est dite {\it cus\-pi\-da\-le} 
si elle n'apparaît comme sous-représentation 
d'aucune induite de la forme \eqref{VentreDieu} avec $r\>2$, et 
elle est dite \textit{supercuspidale} si elle n'apparaît comme sous-quotient 
d'aucune induite de la forme \eqref{VentreDieu} avec $\pi_1,\dots,\pi_r$ 
irréductibles et $r\>2$.  
(Il n'est pas nécessaire de supposer que $\pi_1,\dots,\pi_r$ 
sont irréductibles dans la définition précédente~; 
voir \cite[Proposition 11.1]{SSb}.)

On note ${\CA}$ le sous-ensemble de $\XA$ formé 
des clas\-ses de re\-pré\-sen\-ta\-tions irréductibles cuspidales, 
et $\SA$ le sous-ensemble de $\CA$ formé des classes de représentations 
supercuspidales.

Pour le résultat suivant, on renvoie à 
\cite{MSc} théorèmes 2.1 et 8.16 dans le cas $p$-adique, 
et à \cite{MSf} théorèmes 2.2 et 2.5 dans le cas fini.

\begin{prop}
Soit une représentation irréductible $\pi\in\Irr$. 
\begin{enumerate}
\item 
Il existe une unique somme
$\cusp(\pi)=\s_1+\dots+\s_r\in\NN(\CA)$,
appelée support cuspidal de $\pi$, 
telle que $\pi$ soit isomorphe à un quotient de $\s_1\tdt\s_r$.
\item 
Il existe une unique somme
$\scusp(\pi)=\om_1+\dots+\om_n\in\NN(\SA)$,
appelée support super\-cuspidal de $\pi$, 
telle que $\pi$ soit isomorphe à un sous-quo\-tient de
$\om_1\tdt\om_n$.
\end{enumerate}
\end{prop}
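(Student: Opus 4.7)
The plan is to prove both parts simultaneously by induction on $m=\deg(\pi)$. The base case $m=1$ is immediate: every irreducible representation of $\G_{1}=\D^{\times}$ is cuspidal, hence supercuspidal --- in the finite case because $\G_{1}$ admits no proper parabolic subgroup, and in the $p$-adic case because $\G_{1}$ is compact modulo its centre --- so one sets $\cusp(\pi)=\scusp(\pi)=\sy{\pi}$, and uniqueness is trivial.

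For the inductive step, existence in (1) proceeds as follows. If $\pi$ is cuspidal, take $\cusp(\pi)=\sy{\pi}$. Otherwise, by the definition of non-cuspidality, $\pi$ embeds in some $\tau_{1}\times\cdots\times\tau_{r}$ with $r\geqslant 2$ and $\tau_{i}\in\Irr$ of strictly smaller degree. The contragredient functor --- an exact involution on $\Irr$ preserving cuspidality and commuting with $\ip_{\a}$ up to reversal of factors --- converts this embedding into a surjection from an analogous product onto $\pi$. Applying the induction hypothesis to each $\tau_{i}$ and using transitivity of parabolic induction, I exhibit $\pi$ as a quotient of $\s_{1}\times\cdots\times\s_{s}$ with each $\s_{j}$ cuspidal, and set $\cusp(\pi):=\sy{\s_{1}}+\cdots+\sy{\s_{s}}$. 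Existence in (2) is entirely parallel, replacing ``quotient'' by ``subquotient'' throughout and invoking the definition of supercuspidality.

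The crucial point is uniqueness, which in both parts rests on the geometric lemma (in the $p$-adic case) or the Mackey formula (in the finite case), applied to $\rp_{\b}\circ\ip_{\a}(\s_{1}\otimes\cdots\otimes\s_{r})$ for cuspidal $\s_{i}$. This expresses the iterated Jacquet module as a filtration indexed by double cosets $W_{\a}\backslash W/W_{\b}$; cuspidality of the $\s_{i}$ kills every term involving a proper Jacquet functor of some $\s_{i}$, leaving only contributions parametrised by permutations of the tuple $(\s_{1},\ldots,\s_{r})$. If $\pi$ is a quotient of two products $\s_{1}\times\cdots\times\s_{r}$ and $\s'_{1}\times\cdots\times\s'_{s}$ of cuspidals, Frobenius reciprocity --- together with Bernstein's second adjointness in the $p$-adic case, to convert the quotient condition into a subrepresentation condition after applying an appropriate Jacquet functor --- places both cuspidal tuples inside the same iterated Jacquet module of $\pi$, and the geometric lemma forces them to coincide as multisets in $\NN(\CA)$. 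The argument for (2) is identical, with ``subquotient'' replacing ``quotient''.

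The main obstacle is precisely this uniqueness step, since it requires fine control of the iterated Jacquet modules of induced (super)cuspidals in the modular setting. The exactness of $\ip_{\a}$ and $\rp_{\a}$ recalled in \S\ref{GeEm}, valid in any characteristic different from $p$, ensures that the classical Bernstein-Zelevinsky / Mackey arguments carry over essentially without modification, and no pathology specific to positive characteristic $\ell$ enters this particular proof.
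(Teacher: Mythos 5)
The paper itself gives no proof of this proposition: it simply defers to \cite{MSc} (théorèmes 2.1 et 8.16) in the $p$-adic case and to \cite{MSf} (théorèmes 2.2 et 2.5) in the finite case, so your sketch cannot be matched against an in-text argument. On its own merits it has two problems, one fixable and one fatal.

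The fixable one is the dualisation step in the existence part of (1). Dualising an embedding $\pi\hookrightarrow\tau_{1}\times\cdots\times\tau_{r}$ produces a surjection $\tau_{r}^{\vee}\times\cdots\times\tau_{1}^{\vee}\twoheadrightarrow\pi^{\vee}$, \emph{not} onto $\pi$; inserting the contragredient in the middle of the induction the way you do therefore does not yield $\pi$ as a quotient. The correct organisation is to first prove by your induction that every irreducible \emph{embeds} in a product of cuspidals (this needs no contragredient at all: if $\pi$ is not cuspidal, it embeds in some $\tau_{1}\times\cdots\times\tau_{r}$, apply the inductive hypothesis to each $\tau_{i}$ and transitivity of $\ip_{\a}$, which preserves injections by exactness), and \emph{then} apply contragredient once to the completed statement to obtain the quotient version. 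As written, the argument conflates $\pi$ and $\pi^{\vee}$.

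The fatal one is the final sentence that ``the argument for (2) is identical, with `subquotient' replacing `quotient',\,'' together with the remark that ``no pathology specific to positive characteristic $\ell$ enters this particular proof.'' This is precisely backwards, and it is where the real content of part (2) lies. When $\R$ has characteristic $\ell>0$ there exist cuspidal representations that are not supercuspidal (the representations $\st_{r}(\s)$ of théorème~\ref{AppCuspSuper}); for such a $\pi$, \emph{all} proper Jacquet modules $\rp_{\b}(\pi)$ vanish, so the geometric lemma / Mackey formula you invoke gives strictly no information about the supercuspidal multiset $\om_{1}+\cdots+\om_{n}$. Uniqueness of the supercuspidal support then cannot be extracted from parabolic restriction alone; in \cite{MSc} (théorème 8.16) it rests on the full classification of cuspidals in terms of supercuspidals via the map $(\s,r)\mapsto\st_{r}(\s)$, which in turn depends on type theory and Hecke-algebra structure. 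In characteristic zero the distinction disappears because cuspidal equals supercuspidal, so your ``identical'' claim is only true in the very case the paper is not primarily about. Part (2) is genuinely harder than part (1), and your proposal does not engage with that difficulty at all.
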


Soit $\s$ une représentation irréductible cuspidale, de degré $m\>1$.
Dans le cas $p$-adique, il lui correspond (via la théorie des types) deux 
entiers $f(\s),s(\s)\>1$ (voir \cite[3.4]{MSt}).
On pose~:
\begin{eqnarray}
\nr^{} & = & 
\left\{
\begin{array}{ll}
\nu^{s(\s)} & \text{dans le cas $p$-adique,} \\
\text{le caractère trivial} & \text{dans le cas fini,}
\end{array}
\right. \\
\label{blo}
\qr^{} & = &
\left\{
\begin{array}{ll}
q^{f(\s)} & \text{dans le cas $p$-adique,} \\
q^{\deg(\s)} & \text{dans le cas fini.}
\end{array}
\right. 
\end{eqnarray}
Pour harmoniser les notations, 
on pose $f(\s)=\deg(\s)$ dans le cas fini, 
de sorte qu'on a $\qr^{}=q^{f(\s)}$ dans tous les cas. 
Enfin on pose~:
\begin{equation}
\label{DEFe}
e(\s) = 
\left\{
\begin{array}{l}
\text{$0$ si $\R$ est de caractéristique nulle,} \\
\text{le plus petit 
$k\>2$ tel que 
$1+\qr^{}+\dots+\qr^{k-1}=0$ dans $\R$
sinon.}
\end{array}
\right. 
\end{equation}

Pour tout entier $n\>2$, l'induite~:
\begin{equation*}
\s \times \s \nr^{} \times \dots \times \s \nr^{n-1}
\end{equation*}
contient un sous-quotient irréductible cuspidal si et seulement si 
$\R$ est de caractéristique $\ell>0$ et s'il existe $r\>0$ tel que 
$n=e(\s)\ell^r$ 
(voir \cite[Proposition 6.4]{MSc} et \cite[Paragraphe 1.4]{MSf}). 
Dans ce cas, le sous-quotient cuspidal est unique, 
et il apparaît avec multiplicité $1$ dans l'induite. 
On le note~:
\begin{equation*}
\st_r(\s).
\end{equation*}
Le théorème ci-dessous donne une classification des représen\-ta\-tions 
irréductibles cuspidales en fonc\-tion des supercuspidales 
(voir \cite[Théorème 6.14]{MSc} et \cite[Théorème 1.4]{MSf}).

\begin{theo}
\label{AppCuspSuper}
\begin{enumerate}
\item 
L'application~:
\begin{equation*}
(\s,r)\mapsto\st_r(\s) 
\end{equation*}
est une surjection de $\Ss\times\ZZ_{\>0}$ sur $\CA\smallsetminus\SA$. 
\item 
Pour que
deux couples $(\s,r),(\s',r')\in\Ss\times\ZZ_{\>0}$ aient la 
même image par cette application, il faut et il suffit que 
$r'=r$ et qu'il existe un entier $i\in\ZZ$ tel que $\s'$ soit isomorphe à 
$\s\nr^i$.
\end{enumerate}
\end{theo}

Etant donnée une représentation irréductible cuspidale $\s$, on pose~: 
\begin{equation}
\label{classeinert}
\Om_\s = 
\left\{
\begin{array}{ll}
\{[\s]\} & 
\text{dans le cas fini,} \\
\{[\s\chi]\ |\ \text{$\chi$ caractère non ramifié de $\G_{\deg(\s)}$} \} & 
\text{sinon.}
\end{array}
\right. 
\end{equation}
Dans le cas $p$-adique, deux représentations irréductibles cuspidales 
$\s,\s'$ telles que $\Om_{\s}=\Om_{\s'}$
sont dites \textit{inertiellement équivalentes}.
Par commodité, nous étendrons cette définition au cas fini. 

\begin{theo}[{\cite[Théorème 4.18]{MSt}, \cite[Proposition 3.3]{MSf}}]
\label{DEC} 
Soient 
$\s_1,\dots,\s_r$ des repré\-sen\-ta\-tions 
cuspidales deux à deux non 
inertiellement équivalentes. 
Pour chaque $i\in\{1,\dots,r\}$, on fixe un support cuspidal 
$\ss_i$ formé de représentations inertiellement équivalentes 
à $\s_i$. 
\begin{enumerate}
\item 
Pour chaque entier $i$, soit $\pi_i$ une représentation irréductible de 
support cuspidal $\ss_i$.
Alors l'induite $\pi_1\times\dots\times\pi_r$ est irréductible.
\item
Soit $\pi$ une représentation irréductible de 
support cuspidal $\ss_1+\dots+\ss_r$.
Alors il existe des représentations $\pi_1,\dots,\pi_r$, 
uniques à isomorphisme près, telles que $\pi_i$ soit de support 
cuspidal $\ss_i$ pour chaque $i$ 
et telles que $\pi_1\times\dots\times\pi_r$ soit 
isomorphe à $\pi$.
\end{enumerate}
\end{theo}

On a aussi un variante supercuspidale de ce théorème.

\begin{theo}[{\cite[Théorème 8.19]{MSc}, \cite[Proposition 1.8]{MSf}}]
\label{DEC1} 
On reprend les hypothèses du théorème \ref{DEC}, en supposant en outre 
que $\s_1,\dots,\s_r$ sont supercuspidales. 
\begin{enumerate}
\item 
Pour chaque entier $i$, soit $\pi_i$ une représentation irréductible de 
support supercuspidal $\ss_i$.
Alors l'induite $\pi_1\times\dots\times\pi_r$ est irréductible.
\item
Soit $\pi$ une représentation irréductible de 
support supercuspidal $\ss_1+\dots+\ss_r$.
Il existe des représentations $\pi_1,\dots,\pi_r$, 
uniques à isomorphisme près, telles que $\pi_i$ soit de support 
supercuspidal $\ss_i$ pour chaque $i$ 
et telles que $\pi_1\times\dots\times\pi_r$ soit 
isomorphe à $\pi$.
\end{enumerate}
\end{theo}

\subsection{L'algèbre de Hecke}
\label{hecke}

Soient $n\>1$ et $u\in\mult\CR$. 
On note $\Hh(n,u)$ la $\CR$-algèbre engendrée 
par les symboles $\SS_1,\dots,\SS_{n-1}$ avec les relations~:
\begin{alignat}{2}
\label{eq:relations}
(\SS_i+1)(\SS_i-u)&=0, &\quad\quad& i\in\{1,\dots, n-1\},\\
\label{R1aff}
\SS_i\SS_j&=\SS_j\SS_i, && |i-j|\>2,\\
\label{R3aff}
\SS_i\SS_{i+1}\SS_i&=\SS_{i+1}\SS_i\SS_{i+1}, && i\in\{1,\dots, n-2\}.
\end{alignat}
Il y a donc une involution $\inv$ de $\Hh(n,u)$ définie par~:
\begin{equation}
\label{invf}
\SS_i\mapsto -\SS_{n-i}+u-1
\quad
i\in\{1,\dots,n-1\}.
\end{equation}
Puis on note $\widetilde{\Hh}(n,u)$ la $\CR$-algèbre engendrée 
par les symboles $\SS_1,\dots,\SS_{n-1}$ 
et $\X_1,\dots,\X_n$ et leurs inverses 
avec les relations \eqref{eq:relations} à \eqref{R3aff} auxquelles s'ajoutent 
les relations~: 
\begin{alignat}{2}
\X_i\X_j&=\X_j\X_i, &\quad\quad& i,j\in\{1,\dots,n\},\\
\X_j\SS_i&=\SS_i\X_j, && i\notin\{j,j-1\},\\
\label{Rderaff}
\SS_i\X_{i}\SS_i&=u\X_{i+1}, && i\in\{1,\dots,n-1\}.
\end{alignat}
La première s'identifie à une sous-algèbre de la seconde~;
on note $\inv$ l'involution de 
$\widetilde{\Hh}(n,u)$ défi\-nie par \eqref{invf} et~:
\begin{equation}
\label{invp}
\X_j\mapsto\X_{n+1-j},
\quad
j\in\{1,\dots,n\}.
\end{equation}

Soit $\s$ une représentation irréductible cuspidale de degré $m\>1$, 
et soit $\G=\G_m$. 
Fixons un entier $n\>1$.

Dans le cas $p$-adique, d'après \cite[Théorème 3.11]{MSt}, 
il existe un sous-groupe ouvert compact $\J$ de $\G$ 
et une représentation irréductible $\l$ de $\J$ tels que les 
représentations irréductibles de $\G$ dont la restriction à $\J$
admette $\l$ comme sous-représentation sont exactement les 
$\s\chi$ pour $\chi$ décrivant les caractères non ramifiés de $\G$. 
Un tel couple $(\J,\l)$ est appelé un type simple (maximal) pour $\s$.
On note~:
\begin{equation*}
\Si=\ind^\G_\J(\l)
\end{equation*}
l'induite compacte de $\l$ à $\G$. 
Dans le cas fini, on pose simplement $\Si=\s$.

On note $\Hh(\s,n)$ 
l'algèbre des endomorphismes de l'induite 
$\Si^{\times n}=\Si\tdt\Si$, 
le produit de $n$ copies de $\Si$.
D'après \cite[Proposition 4.18]{MSt} dans le cas $p$-adique et \cite[§5]{DippFlei1} 
dans le cas fini, il y a un isomorphisme naturel~:
\begin{equation}
\label{Hodkann}
\Psi_{\s,n} : \Hh(\s,n) \to
\left\{
\begin{array}{ll}
\widetilde{\Hh}(n,\qr)  & \text{dans le cas $p$-adique}, \\
\Hh(n,\qr) & \text{dans le cas fini}.
\end{array}
\right.
\end{equation}
Dans les deux cas, on a défini une représentation $\Si$ de $\G$ et, 
pour tout entier $n\>1$, une algèbre $\Hh(\s,n)$, 
isomorphe à une algèbre de Hecke de paramètre $\qr$,
et munie d'une involution $\bt$.

Si $\m$ est un $\Hh(\s,n)$-module à droite, on note $\inv(\m)$ 
le $\R$-espace vectoriel $\m$ muni d'une structure de $\Hh(\s,n)$-module à
droite par~:
\begin{equation*}
(x,h) \mapsto x*{\bt}(h)
\end{equation*}
pour tous $x\in\m$ et $h\in\Hh(\s,n)$, 
où $*$ désigne l'action de $\Hh(\s,n)$ sur le module $\m$.

Notons $\MA_{\s}$ la somme directe, portant sur $n\>0$, 
des groupes de Grothendieck des caté\-go\-ries des $\Hh(\s,n)$-modules 
à droite de dimension finie.

Pour toute composition $\a=(n_1,\dots,n_r)$ de $n$, on note 
$\Hh(\s,\a)$ la sous-$\R$-algèbre de $\Hh(\s,n)$ en\-gen\-drée par les 
$\SS_i$ tels que $i\notin\{n_1+n_2+\dots+n_k\ |\ 1\<k\<r-1\}$ --- 
auxquels on ajoute tous les $\X_j$ dans le cas $p$-adique.
On note ${\bf r}_\a$ 
le foncteur de restriction de $\Hh(\s,n)$ à $\Hh(\s,\a)$ 
et ${\bf i}_\a$ son adjoint à droite.
Ces deux foncteurs sont exacts.

De façon analogue au paragraphe \ref{GeEm}, on munit $\MA_\s$ 
d'une structure de $\ZZ$-algèbre commutative graduée~:
si, pour chaque $i\in\{1,\ldots,r\}$, on a un
$\Hh(\s,n_i)$-module à droite de dimension finie $\m_i$, 
on pose~:
\begin{eqnarray*}
\label{VentreDieuHmod}
\m_1\times\cdots\times\m_r
& =& {\bf i}_{\a}(\m_1\otimes\cdots\otimes\m_r) \\
& =& \Hom_{\Hh(\s,\a)}(\Hh(\s,n),\m_1\otimes\cdots\otimes\m_r).
\end{eqnarray*}

La sous-algèbre $\Hh(\s,\a)$ n'est en général pas stable par 
${\bt}$~: son image est $\Hh(\s,\a')$, où $\a'$ est la composition 
$(n_r,\dots,n_1)$.
Par conséquent, 
on a un isomorphisme fonctoriel de $\Hh(\s,n)$-modules
entre $\inv(\m_1\times\cdots\times\m_r)$ et 
$\inv(\m_r)\times\cdots\times\inv(\m_1)$
qui, après semi-simplification, donne l'égalité~:
\begin{equation*}
\inv(\m_1\times\cdots\times\m_r) = 
\inv(\m_1)\times\cdots\times\inv(\m_r)
\end{equation*}
dans $\MA_\s$, faisant de $\inv$ une involution de $\ZZ$-algèbre de $\MA_\s$. 

\subsection{Représentations et modules}
\label{RepMod}

On reprend les notations du paragraphe précédent. 
Soit $\Ee(\s,n)$ la sous-catégorie pleine de la catégorie des 
représentations de $\G_{mn}$ dont les objets sont les sous-quotients de 
sommes arbitraires de copies de $\Si^{\times n}$. 
Pour le résultat suivant, on renvoie à 
\cite[\textsection 4.1]{MSt} et \cite[\textsection 3]{MSf}.

\begin{theo}
\label{qptf}
\begin{enumerate}
\item
La représentation $\Si^{\times n}$ est quasi-projective \cite{Vigs,MSt}. 
\item
Le foncteur~:
\begin{equation}
\label{BIJ}
\FM : \pi\mapsto\Hom_\G(\Si^{\times n},\pi)
\end{equation}
est un foncteur exact de 
$\Ee(\s,n)$ dans la catégorie des $\Hh(\s,n)$-modules à droite.
\item
Ce foncteur 
induit une bijection 
entre les classes de représentations irréduc\-ti\-bles de $\G_{mn}$ dont le 
support cuspidal appartient à $\Dive(\Om_\s)$ 
et les $\Hh(\s,n)$-modules à droite simples.
\end{enumerate}
\end{theo}

On pose~:
\begin{eqnarray*}
\XA_{\s,n}^{} & = & \{\pi\in\XA\ |\ \text{$\pi$ est un sous-quotient de 
$\s_1 \tdt \s_n$ avec $\s_1,\dots,\s_n\in\Om_\s$} \}, \\
\XA_{\s,n}^{*} & = & 
\{\pi\in\XA\ |\ \text{$\pi$ est un quotient de 
$\s_1 \tdt \s_n$ avec $\s_1,\dots,\s_n\in\Om_\s$} \}.
\end{eqnarray*}
On note $\XA_{\s}^{}$ la réunion des $\XA_{\s,n}^{}$ pour $n\>0$,
et on définit $\XA_{\s}^{*}$ de façon analogue.
On note $\RA_\s$ la sous-$\ZZ$-algèbre de $\RA$ en\-gen\-drée par $\XA_{\s}$, 
et on note $\JA_\s$ l'idéal de $\RA_\s$ engendré par les
$\pi\in\Irr_{\s}^{}$ tels que $\pi\notin\Irr_{\s}^{*}$ 
(\ie tels que $\cusp(\pi)\notin\Dive(\Om_{\s})$).



Les foncteurs définis pour tout $n\>0$ par \eqref{BIJ} 
induisent un morphisme surjectif de groupes~:
\begin{equation}
\label{Falbala}
\FM : \RA_{\s} \to \MA_{\s}
\end{equation}
(encore noté $\FM$ par abus de notation) de noyau $\JA_\s$.

\begin{rema}
On verra plus loin (voir proposition \ref{Tet}) que $\FM$ est un morphisme 
d'algèbres. 
Après avoir défini dans la section \ref{sec:AC} une involution $\AC$ sur 
$\RA_\s$, on verra dans la section \ref{four}  
que c'est même | \textit{à un signe près} | un morphisme 
d'algèbres à involution lorsqu'on munit $\MA_\s$ de $\bt$.
\end{rema}

\section{Dualité}\label{sec:AC}

Dans cette section, on introduit un automorphisme involutif d'algèbre de
$\RA$, noté $\AC$ et appelé involution d'Aubert. 
Il ne préserve pas l'irréductibilité (même au signe près)
mais on montre au théo\-rè\-me \ref{ASG} que, pour toute 
représentation irré\-ductible $\pi$, son image $\AC(\pi)$ est une 
combinaison li\-né\-aire de re\-pré\-sen\-tations irréductibles dont une 
seule, notée $\pi^\star$, a le même support cuspidal que $\pi$. 

\subsection{L'involution $\AC$ de $\RA$}
\label{DefASS}

On définit un endomorphisme de groupe de $\RA$ 
en associant à toute représentation ir\-ré\-ductible $\pi\in\XA$ 
la représentation virtuelle dans $\RA$~:
\begin{equation*}
\AC(\pi)=\sum\limits_{\a}(-1)^{r(\a)}\cdot\ip_\a\circ\rp_\a(\pi)
\end{equation*}
où $\a$ décrit l'ensemble des compositions 
de $\deg(\pi)$ et où $r(\a)$ est le nombre de termes de $\a$.

\begin{prop}
\label{propAC}
L'application $\AC$ est un automorphisme involutif de $\ZZ$-algèbre
de $\RA$. 
\end{prop}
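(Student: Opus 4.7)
Le plan consiste à établir séparément que $\AC$ est un morphisme de $\ZZ$-algèbres, puis que $\AC\circ\AC$ est l'identité; la combinaison de ces deux propriétés entraînera que $\AC$ est un automorphisme involutif. Les arguments sont hérités du cas complexe dû à Aubert \cite{Aubert}, et il s'agit essentiellement de vérifier que les raisonnements demeurent valides lorsque $\R$ est de caractéristique $\ell\neq p$.

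Pour la multiplicativité, on considère deux représentations irréductibles $\pi_1$ et $\pi_2$ de $\G_{m_1}$ et $\G_{m_2}$. Le point-clef est le lemme géométrique de Bernstein-Zelevinski (cas $p$-adique) ou de Howlett-Lehrer (cas fini), qui exprime $\rp_\g\circ\ip_{(m_1,m_2)}(\pi_1\otimes\pi_2)$ comme une somme indexée par les doubles classes du groupe symétrique ambiant modulo les stabilisateurs de $\g$ et de $(m_1,m_2)$. Pour $\GL_m(\D)$, ces doubles classes sont paramétrées par les couples $(\a_1,\a_2)$ de compositions de $m_1$ et $m_2$ dont un certain entrelacement donne $\g$. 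En regroupant la somme double définissant $\AC(\pi_1\times\pi_2)$ selon ces couples et en invoquant la transitivité de l'induction parabolique, les signes $(-1)^{r(\g)}$ se factorisent en $(-1)^{r(\a_1)}\cdot(-1)^{r(\a_2)}$, ce qui donne l'identité $\AC(\pi_1\times\pi_2)=\AC(\pi_1)\times\AC(\pi_2)$ voulue.

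Pour l'involutivité, on développe
\begin{equation*}
\AC^2(\pi) = \sum_{\a,\b} (-1)^{r(\a)+r(\b)}\, \ip_\a\circ\rp_\a\circ\ip_\b\circ\rp_\b(\pi)
\end{equation*}
puis on applique le lemme géométrique à la composition intermédiaire $\rp_\a\circ\ip_\b$. On obtient une somme triple indexée par les triplets $(\a,\b,w)$, où $w$ parcourt un système de représentants de doubles classes dans le groupe symétrique. Une identité combinatoire classique, due essentiellement à Solomon et adaptée par Aubert au cadre des foncteurs d'induction et restriction paraboliques, montre que, pour chaque $w$ non trivial, la somme alternée sur les couples $(\a,\b)$ associés s'annule; seul subsiste le terme correspondant à la double classe triviale, qui contribue exactement $\pi$. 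L'obstacle principal tient à la validité modulo $\ell$ du lemme géométrique: il provient d'une filtration exacte de $\rp_\a\circ\ip_\b$ dont les gradués sont des compositions de foncteurs exacts, filtration qui existe dans la catégorie abélienne des $\R$-représentations sans hypothèse de caractéristique; les identités combinatoires de signes sont, elles, indépendantes de $\ell$. Un traitement complet de ces deux points pour les formes intérieures de $\GL_n(\F)$ est donné dans \cite{MSb}.
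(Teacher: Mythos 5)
Your argument is correct and follows the same route as the paper, which proves the proposition by appealing to \cite[\S8]{DM} in the finite case and to \cite[Th\'eor\`eme 1.7]{Aubert} together with \cite[Proposition A.2]{MSb} in the $p$-adic case, noting that those proofs remain valid over $\R$. You have usefully spelled out the reason behind that claim, namely that the geometric lemma (resp.\ the Mackey formula) furnishes an exact filtration valid over any field where $p$ is invertible and that the Solomon-type sign identities are purely combinatorial, hence characteristic-free.
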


\begin{proof}
La preuve donnée dans \cite[§8]{DM} dans le cas fini pour les représentations 
complexes est encore valable pour les représentations modulaires. 

Dans le cas $p$-adique, 
voir \cite[Théorème 1.7]{Aubert} et \cite[Proposition A.2]{MSb}.
\end{proof}

\begin{defi}
Pour toute représentation irréductible $\pi\in\XA$, 
on note $r(\pi)$ le nombre de termes du support cuspidal de $\pi$. 
\end{defi}

\begin{rema}
\label{remAC}
Dans le cas fini, cette dualité a été introduite simul\-tanément par Alvis 
et Cur\-tis (voir \cite{Alvis1,Curtis} et \cite[\S8]{DM})
pour les représentations complexes de groupes plus généraux.
Elle pré\-ser\-ve l'irréductibilité à un signe près, \ie que, pour toute 
représentation irréductible complexe $\pi\in\XA$, on a~:
\begin{equation*}
(-1)^{r(\pi)}\cdot\AC(\pi)\in\XA.
\end{equation*}
La dualité a été étendue par Ca\-ba\-nes et Rickard \cite{CR} 
à des représentations à coefficients dans un anneau commutatif 
quelconque dans lequel $p$ est inversible.

Dans le cas $p$-adique, $\AZ$ a été introduite par Aubert 
\cite{Aubert} pour les représentations complexes de groupes réductifs 
$p$-adiques
(voir aussi \cite{Ze1,Tadic,BaduRena} pour le cas du groupe $\GL_m(\D)$). 
Elle préserve aussi l'irréductibilité à un signe près.
Voir \cite[Appendice A]{MSb} dans le cas modulaire banal.
\end{rema}

Dans le cas modulaire, $\AC$ ne préserve plus l'irréductibilité, pas même à un 
signe près. 

\begin{exem}
On suppose que la caractéristique $\ell$ de $\R$ divise $q+1$.
On note $1$ le caractère tri\-vial de $\F^\times$ 
(on a donc $e(\s)=2$ quand $\s$ est le caractère $1$).
On note $1_2$ le ca\-rac\-tère trivial de $\GL_2(\F)$ et $\nu_2$ le caractère 
$g\mapsto|\det(g)|_{\F,\R}$. 
Soit $\pi$ la représentation de $\GL_2(\F)$ sur l'espace des fonc\-tions 
lo\-ca\-lement constantes sur la droite projective sur $\F$, à valeurs dans $\R$. 
On a~:
\begin{equation*}
[\pi] =
1_2+\st_0(1)+\nu_2
\end{equation*}
dans $\RA$, et $\st_0(1)$ est cuspidale \cite{VigGL2} (voir aussi le théorème 
\ref{AppCuspSuper}). 
On a ainsi~:
\begin{eqnarray*}
\AC(1_2)&=&\nu_2+\st_0(1),\\
\AC(\st_0(1))&=&-\st_0(1).
\end{eqnarray*}
\end{exem}

Le but de cet article est de montrer le théorème suivant. 

\begin{theo}
\label{ASG}
Soit $\pi\in\Irr$.
Alors il y a une unique repré\-sentation irréductible $\pi^\star$
de même support cuspidal que $\pi$ telle que~: 
\begin{equation*}
\AC(\pi)- (-1)^{r(\pi)} \cdot \pi^\star\in\RA
\end{equation*}
ne contienne pas de terme irréductible de même support cuspidal que $\pi$.
\end{theo}

\begin{rema}
Il est clair que tous les sous-quotients irréductibles de $\AC(\pi)$ ont même 
support supercuspidal que $\pi$. Le problème est de montrer qu'un seul parmi
eux a le même support cuspidal que $\pi$. 
\end{rema}

\subsection{Premières réductions du problème}

Soit $\s$ une représentation irréductible cuspidale. 
On rappelle que $\RA_\s$ est la sous-algèbre de 
$\RA$ engendrée par $\XA_{\s}$, et que $\JA_\s$ est 
l'idéal de $\RA_\s$ engendré par les
$\pi\in\Irr_{\s}^{}$ telles que $\pi\notin\Irr_{\s}^*$.
On note aussi $\IA_\s$ l'idéal de $\RA_\s$ engendré par 
l'ensemble $\XA_{\s}$ privé du caractère trivial de $\G_0$.

On renvoie au paragraphe \ref{DefRepCusp} pour la définition de la notation 
$\st_{r}(\s)$, $r\>0$. 
Par commodité, on pose aussi $\st_{-1}(\s)=\s$. 

\begin{lemm}
\label{lemmeclef}
Soit $\pi$ une représentation irré\-ductible telle que 
$\cusp(\pi)\in\Dive(\Om_{\s})$.
Si $\tau$ est un terme irréductible de $\AC(\pi)$, alors~:
\begin{equation*}
\cusp(\tau) \in 
\Dive(\Om_{\s}) + \Dive(\Om_{\st_0(\s)}) + \Dive(\Om_{\st_1(\s)}) + 
\Dive(\Om_{\st_2(\s)}) + \dots
\end{equation*}
\end{lemm}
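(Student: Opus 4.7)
Le plan est de se ramener \`a un contr\^ole du support supercuspidal, qui est
pr\'eserv\'e par l'involution d'Aubert. La premi\`ere \'etape consiste \`a
observer que, pour toute composition $\a$ de $\deg(\pi)$, tout sous-quotient
irr\'eductible $\tau'$ de $\ip_\a\rp_\a(\pi)$ v\'erifie $\scusp(\tau')=\scusp(\pi)$,
puisque l'induction et la restriction paraboliques pr\'eservent le support
supercuspidal (par transitivit\'e du support cuspidal, puis appli\-cation de
$\scusp$). Comme $\AC(\pi)=\sum_\a(-1)^{r(\a)}\ip_\a\rp_\a(\pi)$ par d\'efinition,
tout terme irr\'eductible $\tau$ apparaissant dans $\AC(\pi)$ dans $\RA$
satisfait $\scusp(\tau)=\scusp(\pi)$.

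Dans un second temps, on traduit l'hypoth\`ese $\cusp(\pi)\in\NN(\Om_\s)$ en une
description pr\'ecise de $\scusp(\pi)$. Notons $\om$ la supercuspidale sous-jacente :
$\om=\s$ si $\s$ est supercuspidale, sinon $\s=\st_{j_0}(\om)$ pour un entier
$j_0\>0$ par le th\'eor\`eme \ref{AppCuspSuper}. Dans tous les cas, les
supercuspidales apparaissant dans le support supercuspidal d'un \'el\'ement de
$\Om_\s$ sont inertiellement \'equivalentes \`a $\om$, donc $\scusp(\pi)\in\NN(\Om_\om)$.
En d\'ecomposant $\cusp(\tau)=\tau_1+\dots+\tau_s$ en constituants cuspidaux,
l'\'egalit\'e $\sum_j\scusp(\tau_j)=\scusp(\tau)=\scusp(\pi)\in\NN(\Om_\om)$
force $\scusp(\tau_j)\in\NN(\Om_\om)$ pour chaque $j$. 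Le th\'eor\`eme
\ref{AppCuspSuper} classifie alors chaque $\tau_j$~: soit $\tau_j$ est
supercuspidale et appartient \`a $\Om_\om$, soit $\tau_j=\st_r(\om')$ avec
$\om'\in\Om_\om$ et $r\>0$, auquel cas $\tau_j\in\Om_{\st_r(\om)}$ par
invariance de la classe inertielle par torsion non ramifi\'ee.

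Lorsque $\s$ est supercuspidale (donc $\om=\s$), la conclusion
$\cusp(\tau)\in\NN(\Om_\s)+\sum_{r\>0}\NN(\Om_{\st_r(\s)})$ est alors
imm\'ediate. L'\'etape restante, qui constitue \`a mon sens l'obstacle principal,
concerne le cas g\'en\'eral $\s=\st_{j_0}(\om)$ : il faut relier la famille
$\{\Om_\om,\Om_{\st_r(\om)}\mid r\>0\}$ issue de l'argument ci-dessus \`a la
famille $\{\Om_\s,\Om_{\st_i(\s)}\mid i\>0\}$ figurant dans l'\'enonc\'e. Cela
rel\`eve d'une analyse combinatoire exprimant $\st_i(\st_{j_0}(\om))$ sous la
forme $\st_r(\om)$ en termes des invariants $e(\om)$, $e(\s)$ et des puissances
de $\ell$, combin\'ee \`a une contrainte de divisibilit\'e sur les multiplicit\'es
du support supercuspidal de $\pi$ qui exclut les classes $\Om_{\st_r(\om)}$
pour $r<j_0$ (avec la convention $\st_{-1}(\s)=\s$).
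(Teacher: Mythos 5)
Votre proposition commence correctement : l'observation que $\scusp$ est pr\'eserv\'e par $\ip_\a\circ\rp_\a$ et la d\'ecomposition du support cuspidal de $\tau$ via le th\'eor\`eme \ref{AppCuspSuper} donnent bien que tout terme de $\cusp(\tau)$ appartient \`a l'un des $\Om_{\st_r(\om)}$, $r\>-1$, o\`u $\om\in\SA$ v\'erifie $\s=\st_{j_0}(\om)$. Ce sont aussi les premi\`eres lignes de la preuve du papier (qui note $\s'=\om$, $u=j_0$). Le cas $\s$ supercuspidale est alors imm\'ediat, comme vous le dites.

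Mais le point que vous identifiez comme \og l'obstacle principal \fg\ -- exclure les classes $\Om_{\st_r(\om)}$ avec $r<j_0$, c'est-\`a-dire les termes de degr\'e strictement inf\'erieur \`a $\deg(\s)$ -- est pr\'ecis\'ement le c{\oe}ur de la d\'emonstration, et votre proposition ne le traite pas. L'id\'ee d'une \og contrainte de divisibilit\'e sur les multiplicit\'es du support supercuspidal \fg\ est insuffisante : le support supercuspidal ne d\'etermine pas le support cuspidal, et des \og progressions \fg\ courtes de supercuspidales dans $\Om_\om$ (correspondant aux $\st_r(\om)$, $r<j_0$) peuvent a priori se r\'earranger pour reconstituer les progressions longues de $\scusp(\pi)$, sans qu'aucun argument combinatoire ne l'interdise. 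La preuve du papier contourne cela par un argument g\'eom\'etrique : si $\cusp(\tau)$ contenait un terme de degr\'e $\b_1<\deg(\s)$, on aurait $\rp_\b(\tau)\neq0$ pour $\b=(\b_1,\b_2)$ ; le Lemme G\'eom\'etrique (ou la formule de Mackey dans le cas fini) appliqu\'e \`a $\rp_\b(\ip_\a\circ\rp_\a(\pi))$ produit une composition $\g$ plus fine que $\b$ avec $\rp_\g(\pi)\neq0$ et $\g_1\<\b_1<\deg(\s)$ ; or l'hypoth\`ese $\cusp(\pi)\in\Dive(\Om_\s)$ force tous les $\g_i$ \`a \^etre des multiples de $\deg(\s)$, contradiction. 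C'est cette utilisation du Lemme G\'eom\'etrique qui manque \`a votre proposition et qui ne peut pas \^etre remplac\'ee par une analyse des multiplicit\'es du support supercuspidal.
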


\begin{proof}
Il suffit de démontrer que le support cuspidal d'un sous-quotient irréductible 
d'une représentation de la forme $\ip_\a\circ\rp_\a(\pi)$, où $\a$ est une 
composition de $\deg(\pi)$, n'est composé que de représentations dans~: 
\begin{equation}
\label{Corinne}
\Om_{\s} \cup \Om_{\st_0(\s)} \cup \Om_{\st_1(\s)} \cup \Om_{\st_2(\s)} \cup \dots 
\end{equation}
Soient $\s' \in \SA$ et $u \geq -1$ tels que $\s=\st_u(\s')$.
Comme le support supercuspidal de $\pi$ est formé de représentations dans 
$\Om_{\s'}$, 
le support cuspidal de $\tau$ est composé de représentations dans~:
\begin{equation*}
\Om_{\s'} \cup \Om_{\st_0(\s')} \cup \Om_{\st_1(\s')} \cup \Om_{\st_2(\s')} \cup \dots 
\end{equation*}
Supposons qu'il y ait dans $\cusp(\tau)$ un terme inertiellement équivalent à 
$\st_i(\s')$, $i\>-1$, tel que $\deg(\st_i(\s')) < \deg(\s)$. 
Fixons une paire $\b=(\b_1,\b_2)$ avec $\b_1=\deg(\st_i(\s'))$ 
telle que $\rp_\b(\tau)$ soit non nulle. 
Si l'on calcule la restriction parabolique 
$\rp_\b(\ip_\a\circ\rp_\a(\pi))$, 
on trouve, grâce au Lemme Géométrique \cite[1.1.2]{MSc} 
dans le cas $p$-adique et à la formule de Mackey 
\cite[\textsection 1.2]{MSf} dans le cas fini, une composition
$\g=(\g_1,\dots,\g_r)$ plus fine que 
$\b$ telle que $\rp_\g(\pi)$ soit non nulle.
En particulier, on a~:
\begin{equation*}
\g_1\<\b_1<\deg(\s).
\end{equation*}
Mais, par hypothèse sur le support cuspidal de $\pi$, la composition 
$\g$ doit être formée de multiples de l'entier $\deg(\s)$, ce 
qui nous donne une contradiction. 
\end{proof}

Le lemme \ref{lemmeclef} peut être reformulé ainsi~:
si $\pi\in\XA_\s^{}$ et $\pi\notin\XA_{\s}^*$,
alors $\AC(\pi)\in\IA_{\s}$. 


\begin{coro}
\label{astablej}
L'algèbre $\RA_\s$ et ses idéaux $\IA_\s$, $\JA_\s$ sont stables par 
l'automorphisme $\AC$.
\end{coro}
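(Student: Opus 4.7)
The corollary asserts three stability statements, which I would treat in order of increasing depth.

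\textbf{Stability of $\RA_\s$.} Both parabolic functors $\ip_\a$ and $\rp_\a$ preserve supercuspidal support, so $\AC$, being an alternating sum of compositions $\ip_\a\circ\rp_\a$, has the property that every irreducible term $\tau$ of $\AC(\pi)$ satisfies $\scusp(\tau)=\scusp(\pi)$. Since membership in $\XA_\s$ depends only on the supercuspidal support (for $\pi\in\XA_\s$, the multiset $\scusp(\pi)$ is a sum of $\s$-packets, and any $\tau$ sharing this multiset shares this decomposition), each such $\tau$ lies in $\XA_\s$, so $\AC(\pi)\in\RA_\s$.

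\textbf{Stability of $\IA_\s$.} The functors $\ip_\a$ and $\rp_\a$ preserve the degree grading on $\RA$, hence so does $\AC$. Since $\IA_\s$ is exactly the positive-degree part of $\RA_\s$ (the augmentation ideal, obtained by removing the trivial character of $\G_0$), it is $\AC$-stable.

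\textbf{Stability of $\JA_\s$.} This is the substantive step, for which I would combine Lemma \ref{lemmeclef} with the inertial decomposition of Theorem \ref{DEC} and the algebra-automorphism property of $\AC$ (Proposition \ref{propAC}). Let $\pi\in\XA_\s\setminus\XA_\s^*$ be a generator of $\JA_\s$, so $\cusp(\pi)\notin\Dive(\Om_\s)$. Grouping $\cusp(\pi)$ by inertial class, I would write
\begin{equation*}
\cusp(\pi)=\ss_\s+\sum_{i\geq 0}\ss_i,
\quad
\ss_\s\in\Dive(\Om_\s),\ \ss_i\in\Dive(\Om_{\st_i(\s)}),
\end{equation*}
with at least one $\ss_i$ nonzero. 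Since the cuspidal representations $\s,\st_0(\s),\st_1(\s),\dots$ have pairwise distinct degrees and thus are pairwise non-inertially-equivalent, Theorem \ref{DEC} yields a factorization $\pi\cong\pi_\s\times\prod_i\pi_i$ into irreducibles with $\cusp(\pi_\s)=\ss_\s$ and $\cusp(\pi_i)=\ss_i$. Applying $\AC$ as an algebra morphism,
\begin{equation*}
\AC(\pi)=\AC(\pi_\s)\cdot\prod_i\AC(\pi_i).
\end{equation*}
For each nontrivial $\pi_i$, Lemma \ref{lemmeclef} applied with $\st_i(\s)$ in place of $\s$ forces every irreducible term of $\AC(\pi_i)$ to have cuspidal support whose terms have degree $\geq\deg(\st_i(\s))>\deg(\s)$ and hence lie outside $\Om_\s$. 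Thus $\AC(\pi_i)\in\JA_\s$ for each such $i$. Since $\JA_\s$ is an ideal of $\RA_\s$ and at least one factor in the product lies in $\JA_\s$ (with the remaining factors in $\RA_\s$ by the first step), we conclude $\AC(\pi)\in\JA_\s$.

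The main obstacle is ensuring that the inertial decomposition of $\cusp(\pi)$ for $\pi\in\XA_\s$ really uses only the classes $\Om_{\st_i(\s)}$, $i\geq -1$, that appear in Lemma \ref{lemmeclef}: this rests on the fact that the supercuspidal support of any element of $\XA_\s$ is a sum of $\s$-packets, combined with the classification of cuspidals by Theorem \ref{AppCuspSuper}, which limits the possible cuspidals in $\cusp(\pi)$ to the list $\s,\st_0(\s),\st_1(\s),\dots$ (up to inertial equivalence).
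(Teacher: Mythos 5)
Your treatment of $\JA_\s$ is essentially the paper's argument: decompose $\pi$ via Th\'eor\`eme \ref{DEC} along the pairwise non-inertially-equivalent cuspidals $\st_i(\s)$, $i\geq -1$, factor $\AC(\pi)$ using the algebra-morphism property of $\AC$ (Proposition \ref{propAC}), apply Lemme \ref{lemmeclef} to each factor, and close with the ideal property of $\JA_\s$. The paper groups the factors with $i\geq 0$ into a single $\pi_+\in\Irr_{\st_0(\s)}$ instead of treating them individually, but this is cosmetic.

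However, your argument for the stability of $\RA_\s$ contains a genuine gap. You assert that membership in $\XA_\s$ depends only on the supercuspidal support. This is correct when $\s$ is supercuspidal, but it fails precisely in the non-supercuspidal case that the corollary must cover. Concretely, take $\s=\st_0(1)$ when $e(1)=2$: this is a cuspidal representation of $\GL_2(\F)$, so $\s\in\XA_\s$, and the trivial character $1_2$ of $\GL_2(\F)$ has the same supercuspidal support as $\s$ (both are subquotients of the same principal series from $\mult\F\times\mult\F$); yet $1_2\notin\XA_\s$, since being non-cuspidal of degree $2=\deg(\s)$ it is not a subquotient of any $\s_1\tdt\s_n$ with $\s_i\in\Om_\s$. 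What actually characterizes $\XA_\s$ is the cuspidal support (it must lie in $\Dive(\Om_\s)+\Dive(\Om_{\st_0(\s)})+\Dive(\Om_{\st_1(\s)})+\dots$), and controlling cuspidal support under $\AC$ is exactly what Lemme \ref{lemmeclef} is for. The paper therefore runs the Th\'eor\`eme \ref{DEC} decomposition together with Lemme \ref{lemmeclef} already at the $\RA_\s$ step: each factor $\pi_i$ yields $\AC(\pi_i)\in\IA_{\st_i(\s)}$, and the decreasing chain $\IA_\s\supseteq\IA_{\st_0(\s)}\supseteq\IA_{\st_1(\s)}\supseteq\dots$ then places the product $\AC(\pi)$ in $\IA_\s\subseteq\RA_\s$. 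Your degree-grading argument for $\IA_\s$ is fine once $\RA_\s$-stability is secured in this way; as written, though, it rests on the unsound first step.
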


\begin{proof}
D'après le théorème \ref{DEC}, toute représentation irréductible $\pi\in\Irr_\s$ 
se décom\-po\-se sous la forme~:
\begin{equation*}
\pi = \pi_{-1}\times\pi_0\times\pi_1\times\pi_2\times\dots
\end{equation*}
où, pour tout $i\>-1$, la représentation $\pi_i$ est irréductible et de 
support cuspidal dans $\Dive(\Om_{\st_i(\s)})$. 
Si l'on applique le lemme \ref{lemmeclef} à $\st_i(\s)$ et $\pi_i$ pour un 
$i\>-1$, on trouve que $\AC(\pi_i)$ appartient à l'idéal $\IA_{\st_i(\s)}$. 
Comme on a~: 
\begin{equation}
\label{decDPI3}
\AC(\pi) = \AC(\pi_{-1})\times\AC(\pi_0)\times\AC(\pi_1)\times\AC(\pi_2)\times\dots
\end{equation}
et comme la famille des $\IA_{\st_i(\s)}$, $i\>-1$, est décroissante, 
on en déduit que $\AC(\pi)$ appartient à $\IA_\s$. 
Comme $\RA_\s$ est égal à $\R\oplus\IA_\s$, on en déduit aussi que 
$\RA_\s$ est stable par $\AC$. 

Si maintenant $\pi\in\Irr_\s\cap\JA_\s$, cela signifie que $\pi\neq\pi_{-1}$. 
Posons~:
\begin{equation*}
\pi_+ = \pi_0\times\pi_1\times\pi_2\times\dots
\in\Irr_{\st_0(\s)}. 
\end{equation*}
On a donc $\pi=\pi_{-1}\times\pi_+$ avec $\AC(\pi_{-1})\in\IA_{\s}$ et 
$\AC(\pi_+)\in\IA_{\st_0(\s)}$.
Comme $\IA_{\st_0(\s)}$ est inclus dans $\JA_\s$, on en déduit que 
$\AC(\pi)\in\JA_\s$. 
\end{proof}

\begin{coro}
\label{RED0}
Soit $\pi\in\Irr_\s$ une représentation irréductible qu'on écrit~:
\begin{equation*}
\pi = \pi_{-1}\times\pi_0\times\pi_1\times\pi_2\times\dots
\end{equation*}
où, pour tout $i\>-1$, la représentation $\pi_i$ est irréductible et de 
support cuspidal dans $\Dive(\Om_{\st_i(\s)})$. 
Si le théorème \ref{ASG} est vrai pour chacun des $\pi_i$ avec $i\>-1$, 
alors il est vrai pour $\pi$. 
\end{coro}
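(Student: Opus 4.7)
Le plan est d'exploiter la multiplicativit\'e de $\AC$ sur $\RA$ (proposition \ref{propAC}) ainsi que le th\'eor\`eme \ref{DEC} appliqu\'e aux classes deux \`a deux non inertiellement \'equivalentes $\Om_{\st_i(\s)}$. D'apr\`es \eqref{decDPI3}, on dispose d\'ej\`a de la factorisation
$$\AC(\pi) = \AC(\pi_{-1})\times\AC(\pi_0)\times\AC(\pi_1)\times\cdots$$
dans $\RA$, produit en fait fini puisque $\pi_i$ est le caract\`ere trivial de $\G_0$ pour presque tout $i$. J'\'ecrirais alors, pour chaque $i\>-1$, en utilisant l'hypoth\`ese, $\AC(\pi_i)=(-1)^{r(\pi_i)}\pi_i^\star+X_i$, o\`u $\pi_i^\star$ est l'unique repr\'esentation irr\'eductible de support cuspidal $\cusp(\pi_i)$ intervenant dans $\AC(\pi_i)$ et o\`u $X_i$ ne contient aucun terme irr\'eductible de m\^eme support cuspidal que $\pi_i$. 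Le candidat naturel pour $\pi^\star$ sera
$$\pi^\star := \pi_{-1}^\star\times\pi_0^\star\times\pi_1^\star\times\cdots,$$
dont l'irr\'eductibilit\'e et le fait d'avoir le bon support cuspidal d\'ecouleront du th\'eor\`eme \ref{DEC}, une fois observ\'e que, dans le seul cas non trivial o\`u $\R$ est de caract\'eristique $\ell>0$, les degr\'es $\deg(\st_{-1}(\s))=\deg(\s)$ et $\deg(\st_i(\s))=\deg(\s)\,e(\s)\,\ell^i$ pour $i\>0$ sont deux \`a deux distincts, de sorte que les classes $\Om_{\st_i(\s)}$ sont deux \`a deux disjointes. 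En d\'eveloppant le produit et en utilisant $r(\pi)=\sum_i r(\pi_i)$, j'obtiendrai
$$\AC(\pi) = (-1)^{r(\pi)}\pi^\star + Y,$$
o\`u $Y$ rassemble les termes crois\'es contenant au moins un facteur $X_i$.

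Le point crucial sera de v\'erifier qu'aucun sous-quotient irr\'eductible de $Y$ n'a pour support cuspidal $\cusp(\pi)$. Pour cela, j'examinerai un tel sous-quotient hypoth\'etique $\tau$, provenant d'un terme $\gamma_{-1}\times\gamma_0\times\cdots$ de $Y$ avec $\gamma_i\in\{(-1)^{r(\pi_i)}\pi_i^\star,X_i\}$ et $\gamma_{i_0}=X_{i_0}$ pour au moins un indice $i_0$. Alors $\tau$ sera sous-quotient d'un produit $\beta_{-1}\times\beta_0\times\cdots$ de repr\'esentations irr\'eductibles, chaque $\beta_i$ \'etant une composante de $\gamma_i$, d'o\`u $\cusp(\tau)=\sum_i\cusp(\beta_i)$. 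Comme $\beta_i$ est sous-quotient de $\AC(\pi_i)$, le lemme \ref{lemmeclef} appliqu\'e \`a $\st_i(\s)$ me fournira $\cusp(\beta_i)\in\Dive(\Om_{\st_i(\s)})+\Dive(\Om_{\st_{i+1}(\s)})+\cdots$ ; la disjonction des classes $\Om_{\st_j(\s)}$, conjugu\'ee \`a la contrainte $\cusp(\tau)=\cusp(\pi)=\sum_i\cusp(\pi_i)$ et \`a l'\'egalit\'e des degr\'es $\deg(\beta_i)=\deg(\pi_i)$, me permettra, par une induction triangulaire sur $j$ d\'ebutant \`a $j=-1$, de conclure que $\cusp(\beta_i)=\cusp(\pi_i)$ pour tout $i$, ce qui contredira le fait que $\beta_{i_0}$ est composante de $X_{i_0}$. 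C'est l\`a le principal obstacle technique, mais il est de nature \'el\'ementaire.

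L'existence de $\pi^\star$ en r\'esultera, et l'unicit\'e s'obtiendra par le m\^eme argument triangulaire appliqu\'e \`a tout sous-quotient irr\'eductible de $\AC(\pi)$ de support cuspidal $\cusp(\pi)$, qui devra n\'ecessairement se d\'ecomposer selon le th\'eor\`eme \ref{DEC} en les $\pi_i^\star$, et donc co\"incider avec $\pi^\star$.
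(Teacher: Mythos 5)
Your proof is correct and follows the same global strategy as the paper's: start from the factorisation \eqref{decDPI3}, define $\pi^\star=\prod_i\pi_i^\star$, and show that no cross term can contribute an irreducible constituent with cuspidal support $\cusp(\pi)$. The difference lies in the bookkeeping for this last step. The paper's proof is terser: it observes that any irreducible constituent $\tau$ of $\prod_i\AC(\pi_i)$ is a subquotient of some $\tau_{-1}\times\tau_0\times\cdots$ with $\tau_i$ a constituent of $\AC(\pi_i)$, then splits into two cases depending on whether some $\tau_i$ lies in $\JA_{\st_i(\s)}$: if so, since $\JA_{\st_i(\s)}\subseteq\JA_\s$ and $\JA_\s$ is an ideal, the whole product lies in $\JA_\s$ (so $\tau$ cannot have the right cuspidal support); if not, the hypothesis on the $\pi_i$ forces $\tau_i=\pi_i^\star$ for all $i$, hence $\tau=\pi^\star$. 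Your argument avoids passing through the ideals $\JA$ and instead compares cuspidal supports directly: from $\cusp(\tau)=\cusp(\pi)$, the triangular degree-counting induction on $j$ (exploiting the lemme \ref{lemmeclef} applied to $\st_i(\s)$, the disjointness of the $\Om_{\st_j(\s)}$, and the graded equality $\deg(\beta_i)=\deg(\pi_i)$) yields $\cusp(\beta_i)=\cusp(\pi_i)$ for every $i$, contradicting the presence of a factor from $X_{i_0}$. This is more hands-on but logically equivalent; in fact it makes explicit a step the paper's dichotomy compresses (that within the case all $\tau_i\notin\JA_{\st_i(\s)}$ one indeed gets $\cusp(\tau_i)=\cusp(\pi_i)$, not merely $\cusp(\tau_i)\in\Dive(\Om_{\st_i(\s)})$). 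You also make explicit the irreducibility of $\pi^\star$ via the th\'eor\`eme \ref{DEC} and the pairwise distinctness of the $\deg(\st_i(\s))$, which the paper leaves implicit. Both proofs rely tacitly on the compatibility $\st_j(\st_i(\s))\in\{\st_k(\s):k>i\}$ when invoking the lemme \ref{lemmeclef} for the cuspidals $\st_i(\s)$ in place of $\s$; this is not an additional gap on your side, as the paper's proof of the corollaire \ref{astablej} already uses the same fact.
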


\begin{proof}
Tout terme irréductible $\tau$ de \eqref{decDPI3} apparaît comme 
sous-quotient d'un produit $\tau_1\tdt\tau_r$ où $\tau_i$ est un terme 
irréductible de $\AC(\pi_i)$, pour chaque $i$. 

Supposons que $\tau_i\in\JA_{\st_i(\s)}$ pour au moins un $i$.
Comme $\JA_{\st_i(\s)}$ est inclus dans $\JA_\s$ et que celui-ci 
est un idéal de $\RA_\s$, on en déduit que $\tau\in\JA_\s$. 

Inversement, supposons que $\tau_i\notin\JA_{\st_i(\s)}$ pour tout $i$. 
Par hypothèse, ceci ne se produit que si $\tau_i^{}=\pi_i^\star$ pour tout $i$, 
\ie si et seulement si $\tau$ est égal à~:
\begin{equation*}
\pi^\star=\pi_1^\star \times \dots \times \pi_r^\star.
\end{equation*}
Ceci prouve que le théorème est vrai pour $\pi$.
\end{proof}




\begin{prop}
\label{RED1}
Supposons que, 
pour toute représentation irréductible supercuspidale $\s$,
le théorème \ref{ASG} soit vrai pour toute représentation 
$\pi\in\XA$ dont le support supercuspidal est dans $\Dive(\Om_\s)$. 
Alors le théorème \ref{ASG} est vrai.
\end{prop}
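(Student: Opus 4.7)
Le plan consiste � se ramener au cas couvert par l'hypoth�se en d�composant $\pi$ selon les classes inertielles de son support super\-cus\-pi\-dal, puis en recollant gr�ce au caract�re multiplicatif de $\AC$ (proposition \ref{propAC}).

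On commencerait par appliquer le th�or�me \ref{DEC1} pour �crire $\pi=\pi_1\tdt\pi_r$, o� chaque $\pi_i\in\XA$ a son support super\-cus\-pi\-dal dans $\Dive(\Om_{\s_i})$ pour des repr�\-sen\-ta\-tions supercuspidales $\s_1,\dots,\s_r$ deux � deux non inertiellement �quivalentes. L'hypoth�se fournit alors, pour chaque $i$, une repr�sentation irr�ductible $\pi_i^\star$ de m�me support cuspidal que $\pi_i$ telle que
\begin{equation*}
\AC(\pi_i)\ =\ (-1)^{r(\pi_i)}\cdot\pi_i^\star\ +\ R_i,
\end{equation*}
o� $R_i\in\RA$ est combinaison d'irr�ductibles dont aucun n'a le support cuspidal de $\pi_i$. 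On poserait $\pi^\star=\pi_1^\star\tdt\pi_r^\star$~: les $\pi_i^\star$ ayant respectivement m�mes supports supercuspidaux que les $\pi_i$, une deuxi�me application du th�or�me \ref{DEC1} garantit l'irr�ductibilit� de $\pi^\star$ ainsi que l'�galit� $\cusp(\pi^\star)=\cusp(\pi)$. Utilisant la multiplicativit� de $\AC$ et l'identit� $r(\pi)=\sum_i r(\pi_i)$, le produit
\begin{equation*}
\AC(\pi)\ =\ \prod_{i=1}^r\AC(\pi_i)
\end{equation*}
se d�veloppe en $(-1)^{r(\pi)}\cdot\pi^\star$ plus une somme de termes crois�s de la forme $\tau_1\tdt\tau_r$ dont au moins un $\tau_j$ est issu de $R_j$.

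Le point central, et principal obstacle, consiste alors � v�rifier qu'aucun sous-quotient irr�ductible d'un tel terme crois� n'a le support cuspidal de $\pi$. Fixant $j_0$ tel que $\cusp(\tau_{j_0})\neq\cusp(\pi_{j_0})$, on observerait que tout sous-quotient irr�ductible de $\tau_1\tdt\tau_r$ a pour support cuspidal la somme $\sum_j\cusp(\tau_j)$~; comme chaque $\tau_j$ a son support super\-cus\-pi\-dal dans $\Dive(\Om_{\s_j})$ et que les $\s_j$ sont deux � deux non inertiellement �quivalentes, il s'agit de l'unique d�composition de ce support cuspidal selon les classes inertielles supercuspidales sous-jacentes. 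Une �galit� avec $\cusp(\pi)=\sum_j\cusp(\pi_j)$ for\-ce\-rait ainsi $\cusp(\tau_j)=\cusp(\pi_j)$ pour tout $j$, contredisant le choix de $j_0$. L'unicit� de $\pi^\star$ r�sultera imm�diatement de sa caract�risation par cette propri�t�.
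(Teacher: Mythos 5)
Ta preuve est correcte et suit essentiellement la m\^eme strat\'egie que celle de l'article~: d\'ecomposition de $\pi$ via le th\'eor\`eme \ref{DEC1}, multiplicativit\'e de $\AC$, application de l'hypoth\`ese \`a chaque facteur $\pi_i$, puis identification du terme $\pi^\star=\pi_1^\star\tdt\pi_r^\star$ gr\^ace \`a l'unicit\'e de la d\'ecomposition du support cuspidal par classes inertielles supercuspidales (disjointes puisque les $\s_i$ sont deux \`a deux non inertiellement \'equivalentes). La seule diff\'erence est cosm\'etique~: tu s\'epares explicitement le terme principal des termes crois\'es issus des $R_i$, alors que l'article raisonne directement sur un terme irr\'eductible arbitraire $\tau$ de $\AC(\pi)$, montrant via le th\'eor\`eme \ref{DEC1} que $\tau_1\tdt\tau_r$ est d\'ej\`a irr\'eductible (donc \'egal \`a $\tau$), ce qui fournit imm\'ediatement $\cusp(\tau)=\sum_i\cusp(\tau_i)$.
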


\begin{proof}
Soit $\pi\in\XA$. 
D'après le théorème \ref{DEC1}, il y a des représentations 
irréductibles supercuspidales $\s_1,\dots,\s_r$ non inertiellement équivalentes 
deux à deux et des représentations irré\-duc\-tibles $\pi_1,\dots,\pi_r$ telles que~:
\begin{equation*}
\pi = \pi_{1}\tdt\pi_{r}
\end{equation*}
et $\scusp(\pi_i)\in\Dive(\Om_{\s_i})$ pour tout $i\in\{1,\dots,r\}$. 
On a~:
\begin{equation}
\label{decDPI}
\AC(\pi) = \AC(\pi_{1})\tdt\AC(\pi_r)
\end{equation}
donc tout terme irréductible $\tau$ de \eqref{decDPI} apparaît comme 
sous-quotient d'un produit $\tau_1\tdt\tau_r$ où $\tau_i$ est un terme 
irréductible de $\AC(\pi_i)$. 
Comme $\scusp(\tau_i)=\scusp(\pi_i)$, le théorème \ref{DEC1} implique 
que le produit $\tau_1\tdt\tau_r$ est irréductible, donc égal à $\tau$. 
Ainsi~:
\begin{equation*}
\cusp(\tau)=\cusp(\tau_1)+\dots+\cusp(\tau_r),
\end{equation*}
qui n'est égal à $\cusp(\pi)$ que si $\cusp(\tau_i)=\cusp(\pi_i)$ pour tout 
$i\in\{1,\dots,r\}$. 
Par hypothèse, ceci ne se produit que si $\tau_i^{}=\pi_i^\star$ pour tout $i$, 
\ie si et seulement si $\tau$ est égal à~:
\begin{equation*}
\pi^\star=\pi_1^\star \times \dots \times \pi_r^\star.
\end{equation*}
Ceci prouve que le théorème est vrai pour $\pi$.
\end{proof}


\begin{prop}
\label{RED2}
Supposons que, pour toute représentation supercuspidale $\s$, 
le théo\-rème \ref{ASG} soit vrai pour toute représentation $\pi\in\XA$ 
telle que $\cusp(\pi)\in\Dive(\Om_\s)$. 
Alors le théorème \ref{ASG} est vrai.
\end{prop}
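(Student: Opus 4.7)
The plan is to reduce the general statement to the hypothesis by combining the reductions already provided by Proposition~\ref{RED1} and Corollary~\ref{RED0}. Let $\pi\in\Irr$. By Proposition~\ref{RED1}, it suffices to treat the case where $\scusp(\pi)\in\Dive(\Om_{\sigma'})$ for some supercuspidal representation $\sigma'$. Fix such a $\sigma'$; then $\pi\in\Irr_{\sigma'}$, so Corollary~\ref{RED0} supplies a decomposition
\[
\pi = \pi_{-1}\times\pi_0\times\pi_1\times\pi_2\times\cdots
\]
in which, for each $i\>-1$, the factor $\pi_i$ is irreducible with $\cusp(\pi_i)\in\Dive(\Om_{\st_i(\sigma')})$, and the theorem for $\pi$ is reduced to the theorem for each individual $\pi_i$.

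For the factor $\pi_{-1}$ one has $\st_{-1}(\sigma')=\sigma'$ by convention, and $\sigma'$ is supercuspidal; the cuspidal support condition $\cusp(\pi_{-1})\in\Dive(\Om_{\sigma'})$ is then exactly the situation in which the hypothesis of the proposition applies (with $\sigma:=\sigma'$). This gives the theorem for $\pi_{-1}$ for free.

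The main obstacle is the case $i\>0$, where $\tau_i:=\st_i(\sigma')$ is cuspidal but \emph{not} supercuspidal, so the hypothesis of Proposition~\ref{RED2} cannot be invoked verbatim on $\pi_i$. To handle this I would re-run the argument used to establish the hypothesis, with $\tau_i$ playing the role of the supercuspidal generator: the structural tools involved---the Hecke algebra realization of Theorem~\ref{qptf}, the inertial decomposition of Theorem~\ref{DEC}, the support-tracking of Lemma~\ref{lemmeclef}, and the stability of $\RA_{\tau_i}$, $\IA_{\tau_i}$, $\JA_{\tau_i}$ under $\AC$ from Corollary~\ref{astablej}---are all formulated for arbitrary cuspidal representations and therefore apply to $\tau_i$ without any change. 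Thus the same chain of reasoning that proves the hypothesis for a supercuspidal $\sigma$ also yields the theorem for each $\pi_i$ with $i\>0$, and combining this with the $i=-1$ case above and the factorisation from Corollary~\ref{RED0} finishes the proof.
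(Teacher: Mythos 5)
Your outline reproduces the paper's own (one-line) proof: combine Corollary~\ref{RED0} and Proposition~\ref{RED1}. You have also correctly pinpointed the place where this does not literally close: after \ref{RED1} reduces to $\pi$ with $\scusp(\pi)\in\Dive(\Om_{\s'})$ for a single supercuspidal $\s'$, and \ref{RED0} factors $\pi=\pi_{-1}\times\pi_0\times\pi_1\times\cdots$, the factors $\pi_i$ with $i\>0$ have cuspidal support in $\Dive(\Om_{\st_i(\s')})$, where $\st_i(\s')$ is cuspidal but \emph{not} supercuspidal (Theorem~\ref{AppCuspSuper}). The hypothesis of Proposition~\ref{RED2}, stated only for supercuspidal $\s$, therefore does not apply to these factors, and it is worth noting that the paper's own proof elides exactly this point.

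Where the proposal goes wrong is the proposed remedy. You say you would ``re-run the argument used to establish the hypothesis, with $\tau_i$ in the role of the supercuspidal generator''; but inside the proof of Proposition~\ref{RED2} the hypothesis is an \emph{assumption}, not something being established, so there is no argument to re-run. Observing that Theorem~\ref{qptf}, Theorem~\ref{DEC}, Lemma~\ref{lemmeclef} and Corollary~\ref{astablej} all hold for arbitrary cuspidal $\s$ is correct but does not, by itself, produce Theorem~\ref{ASG} for $\pi_i$; and silently appealing to the paper's later direct proof of the theorem for cuspidal $\s$ would make Proposition~\ref{RED2} vacuous as a reduction. The tool that genuinely bridges the gap, while keeping the supercuspidal hypothesis, is the change-of-group argument of \S\ref{GreatExpectations}: for each $i\>0$ choose a supercuspidal $\s_i'$ over a suitable base field (for instance the trivial character of $\GL_1$ over a field whose residual cardinality equals $q_{\st_i(\s')}$) so that $q_{\s_i'}=q_{\st_i(\s')}$; the hypothesis of \ref{RED2} gives Theorem~\ref{ASG} on $\XA_{\s_i'}^{*}$, and Corollary~\ref{COROCHGTGP} then transfers it to $\XA_{\st_i(\s')}^{*}$, which contains $\pi_i$. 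Alternatively, one may observe that what the paper actually proves, Theorem~\ref{PRECISION}, is stated for arbitrary \emph{cuspidal} $\s$, so the version of \ref{RED2} genuinely needed replaces ``supercuspidale'' by ``cuspidale'' in the hypothesis, and for that version the conjunction of \ref{RED0} and \ref{RED1} does suffice verbatim.
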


\begin{proof}
Par conjonction du corollaire \ref{RED0} et de la proposition \ref{RED1}.
\end{proof}

Pour la définition de la notation $\FM$ dans l'énoncé suivant, 
on renvoie à \eqref{Falbala}. 


\begin{prop}
\label{RED3}
Soit $\s$ une représentation cuspidale, et soit un entier $n\>1$.
Le théorème \ref{ASG} est vrai pour toute représentation $\pi\in\XA_{\s,n}^{*}$ 
si et seulement si~:
\begin{equation}
\label{mataus}
(-1)^{n}\cdot\FM(\AC(\pi)) \in \MA_{\s}
\end{equation}
est un $\Hh(\s,n)$-module simple.
\end{prop}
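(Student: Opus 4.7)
The plan is to translate both sides of the biconditional into explicit conditions on the coefficients $c_\tau$ in the irreducible expansion $\AC(\pi) = \sum_\tau c_\tau \cdot \tau$, and then match them.

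By Corollary \ref{astablej}, $\AC(\pi) \in \RA_\s$, and since $\AC$ preserves degree the sum runs over $\tau \in \XA_{\s,n}^{}$. Splitting this set as $\XA_{\s,n}^{*} \sqcup (\XA_{\s,n}^{} \setminus \XA_{\s,n}^{*})$, the second piece lies in $\JA_\s$, which is the kernel of $\FM$, so applying $\FM$ yields
\begin{equation*}
\FM(\AC(\pi)) = \sum_{\tau \in \XA_{\s,n}^{*}} c_\tau \cdot \FM(\tau).
\end{equation*}
By Theorem \ref{qptf}(3), $\{\FM(\tau)\}_{\tau \in \XA_{\s,n}^{*}}$ is a $\ZZ$-basis of the Grothendieck group of finite-dimensional $\Hh(\s,n)$-modules; therefore $(-1)^n \FM(\AC(\pi))$ is the class of a simple $\Hh(\s,n)$-module if and only if there is a unique $\tau_0 \in \XA_{\s,n}^{*}$ with $c_{\tau_0} = (-1)^n$ and $c_\tau = 0$ for every other $\tau \in \XA_{\s,n}^{*}$.

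On the other side, since $\cusp(\pi)$ has exactly $n$ summands, $r(\pi) = n$, and Theorem \ref{ASG} for $\pi$ is the analogous statement restricted to those $\tau$ with $\cusp(\tau) = \cusp(\pi)$. To reconcile the two conditions I must show that every $\tau \in \XA_{\s,n}^{*}$ appearing with nonzero coefficient in $\AC(\pi)$ already satisfies $\cusp(\tau) = \cusp(\pi)$. By Proposition \ref{RED2} it suffices to treat the case where $\s$ is supercuspidal, and then $\Om_\s \subset \SA$; in that case, for any $\tau \in \XA_{\s,n}^{*}$ each summand of $\cusp(\tau) \in \NN(\Om_\s)$ is already supercuspidal, so $\cusp(\tau) = \scusp(\tau)$. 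Since parabolic induction and restriction preserve the supercuspidal support, every irreducible constituent of $\ip_\a \circ \rp_\a(\pi)$ satisfies $\scusp(\tau) = \scusp(\pi) = \cusp(\pi)$, whence $\cusp(\tau) = \cusp(\pi)$ as required.

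Combining these observations, the Hecke-module simplicity condition and the Theorem \ref{ASG} condition become the same single-coefficient constraint on $\{c_\tau\}_{\tau \in \XA_{\s,n}^{*}}$, giving the biconditional. The only genuinely delicate step is this last identification of index sets, which is immediate once the supercuspidal reduction of Proposition \ref{RED2} is invoked; the remainder of the argument is a bookkeeping exercise using $\ker\FM = \JA_\s$ together with the bijection of Theorem \ref{qptf}(3).
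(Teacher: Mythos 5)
Your bookkeeping steps (using $\JA_\s = \ker\FM$ and the bijection of Theorem \ref{qptf}(3) to translate both sides into a condition on the coefficients $c_\tau$ over $\tau \in \XA_{\s,n}^{*}$) are fine and match the paper. The gap is in the reduction to the supercuspidal case.

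You write that ``by Proposition \ref{RED2} it suffices to treat the case where $\s$ is supercuspidal,'' but Proposition \ref{RED2} only reduces the proof of Theorem \ref{ASG} to the supercuspidal case; it says nothing about the equivalence asserted in Proposition \ref{RED3}, which is stated for a \emph{fixed arbitrary cuspidal} $\s$. There is no logical route from \ref{RED2} to the claim that it is enough to check \ref{RED3} when $\s$ is supercuspidal, and in fact the general cuspidal case is genuinely used later: in the proof of Corollaire \ref{COROCHGTGP} (and hence in Th\'eor\`eme \ref{PRECISION}), Proposition \ref{RED3} is invoked for a cuspidal $\s$ that need not be supercuspidal. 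So a proof valid only in the supercuspidal case does not suffice.

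Moreover, the argument you give for the key identification step --- ``$\scusp(\tau)=\scusp(\pi)=\cusp(\pi)$, whence $\cusp(\tau)=\cusp(\pi)$'' --- really does use supercuspidality in an essential way and does not transport to general cuspidal $\s$. When $\s$ is cuspidal but not supercuspidal, two distinct elements of $\Dive(\Om_\s)$ can have the same supercuspidal support (for instance when the order of $\nu_\s$ divides the length of $\scusp(\s)$), so equality of supercuspidal supports does not pin down the cuspidal support. The paper instead shows that the candidate $\pi^\star$ is actually an irreducible subquotient of $\s_1\tdt\s_n$ itself (because it occurs with nonzero coefficient in $\AC(\pi)$ and $\ip_\a\circ\rp_\a(\s_1\tdt\s_n)$ has the same semisimplification as $\s_1\tdt\s_n$ by the geometric lemma); once one knows that $\pi^\star$ is a subquotient of $\s_1\tdt\s_n$ with cuspidal support in $\Dive(\Om_\s)$, the geometric lemma forces $\cusp(\pi^\star)=\s_1+\dots+\s_n$. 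That ``subquotient of $\s_1\tdt\s_n$'' step is the ingredient missing from your argument, and it is precisely what replaces the supercuspidal shortcut.
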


\begin{proof}
Rappelons que le théorème \ref{ASG} est vrai pour $\pi$ si et seulement s'il 
existe une repré\-sentation irréductible $\pi^\star$
de même support cuspidal que $\pi$ telle que~: 
\begin{equation*}
\AC(\pi)- (-1)^{r(\pi)} \cdot \pi^\star\in\JA_{\s}.
\end{equation*}
Comme $\pi\in\XA_{\s,n}^{*}$, on a $r(\pi)=n$.
Comme $\JA_{\s}$ est le noyau de $\FM$, cette condition s'écrit~:
\begin{equation*}
(-1)^{n} \cdot \FM(\AC(\pi)) = \FM(\pi^\star).
\end{equation*}
Comme $\FM$ induit une bijection entre $\XA_{\s,n}^{*}$ et les modules à 
droite simples sur $\Hh(\s,n)$ (théorème \ref{qptf}), il s'ensuit que 
le théorème \ref{ASG} est vrai pour $\pi$ si et seulement si \eqref{mataus} 
est un module à droite simple dont l'antécédent par $\FM$ dans 
$\XA_{\s,n}^{*}$ a le même support cuspidal que $\pi$.

Supposons simplement que \eqref{mataus} est un module à droite simple, 
et notons $\pi^\star$ l'unique élément de $\XA_{\s,n}^{*}$ lui correspondant 
par $\FM$. 
Ecrivons $\cusp(\pi)=\s_1+\dots+\s_n$ avec $\s_1,\dots,\s_n\in\Om_\s$.
Alors $\pi^\star$ est un sous-quotient irréductible de $\s_1\tdt\s_n$ dont 
le support cuspidal est formé de termes inertiellement équivalents à $\s$. 
On a donc $\cusp(\pi^\star)=\s_1+\dots+\s_n=\cusp(\pi)$, ce qui met fin à la 
démonstration. 
\end{proof}

\begin{rema}
Dans le cas où $\F$ est fini et $\s$ est le caractère trivial de $\mult\F$, 
la proposition \ref{RED2} et le lien avec l'algèbre de Hecke 
sont prouvés par Ackermann et Schroll (voir \cite{AS}, notamment le théorème 4.1).
\end{rema}

\subsection{Conclusion partielle}
\label{ConcPart}

Si l'on joint les propositions \ref{RED2} et \ref{RED3}, 
on voit que, pour prouver le théorème \ref{ASG}, il suffit de prouver 
que, pour toute représentation supercuspidale $\s\in\SA$, tout $n\>1$
et toute repré\-sen\-tation irréductible $\pi\in\XA_{\s,n}^{*}$, la quantité~:
\begin{equation*}
(-1)^{n}\cdot\FM(\AC(\pi)) 
\end{equation*}
dans $\MA_{\s}$ est un $\Hh(\s,n)$-module simple.


\section{Un calcul de coinvariants}
\label{SEC3}

On fixe une représentation irréduc\-ti\-ble cuspidale $\s$ 
de degré $m$ et un entier $n\>1$, et on pose $\Q=\Si^{\times n}$
et $\Hh=\Hh(\s,n)$ (voir le paragraphe \ref{hecke} pour les notations).

On fixe une composition $(n_1,\dots,n_r)$ de $n$, et on pose~:
\begin{equation*}
\M = \GL_{mn_1}(\D)\tdt\GL_{mn_r}(\D) \subseteq \GL_{mn}(\D) = \G.
\end{equation*}
On note $\P$ le sous-groupe parabolique de $\G$ en\-gen\-dré par 
$\M$ et les matrices triangu\-laires supé\-rieu\-res, et on note $\N$ 
son radical unipotent. 
On pose $\Q_\M=\Si^{\times n_1}\odo\Si^{\times n_r}$,
et on note $\Hh_\M$ son algèbre d'endomorphismes. 

La représentation $\Q$ s'identifie à l'induite parabolique de $\Q_\M$ à $\G$ 
le long de $\P$. 
Par fonctoria\-lité, on en déduit un morphisme (injectif) d'algèbres~:
\begin{equation*}
\label{HMH}
j : \Hh_\M \to \Hh
\end{equation*}
faisant de $\Hh$ un $\Hh_\M$-module à droite. 

\subsection{Le cas fini}
\label{casfiniSEC3}

Dans ce paragraphe, on suppose qu'on est dans le cas fini. 
Comme $\N$ est un $p$-groupe fini et que $p$ est inversible dans $\R$, 
les $\N$-coinvariants $\Q_\N$ sont canoniquement isomorphes aux 
$\N$-invariants $\Q^\N$.

\begin{prop}
\label{isoQN}
On a un isomorphisme~:
\begin{equation*}
\Q^\N \simeq \Hh \otimes_{\Hh_{\M}} \Q_{\M}
\end{equation*}
de représentations de $\M$ et de $\Hh$-modules à droite.
\end{prop}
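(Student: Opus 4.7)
The plan is to compute both sides explicitly and construct a natural map between them. By transitivity of parabolic induction, let $\P_0$ denote the standard parabolic of $\G$ with Levi $\M_0 = \GL_m(\D)^n \subseteq \M$; then $\Q = \Si^{\times n} = \ip^\G_{\P_0}(\s^{\otimes n})$ and $\Q_\M = \ip^\M_{\M \cap \P_0}(\s^{\otimes n})$, compatibly with $\Q = \ip^\G_\P(\Q_\M)$.

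To compute the left-hand side, I would apply the Mackey formula (the geometric lemma in the finite case) to
\begin{equation*}
\Q^\N \simeq \rp^\G_\P(\Q) = \rp^\G_\P \circ \ip^\G_{\P_0}(\s^{\otimes n}),
\end{equation*}
which yields a direct sum indexed by the double cosets $w \in W_\M \backslash W / W_{\M_0}$ (with $W$ the Weyl group of $\G$), each summand involving a factor $\rp^{\M_0}_{\M_0 \cap w^{-1}\P w}(\s^{\otimes n})$. By cuspidality of $\s$, this restriction vanishes unless $\M_0 \subseteq w^{-1}\P w$, and the surviving double cosets are parameterized by functions $\sigma : \{1, \ldots, n\} \to \{1, \ldots, r\}$ with fibre cardinalities $n_1, \ldots, n_r$, equivalently by distinguished right coset representatives of $S_\a := S_{n_1} \times \cdots \times S_{n_r}$ in $S_n$; there are $N = n!/(n_1! \cdots n_r!)$ of them. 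Since the $n$ tensor factors of $\s^{\otimes n}$ are identical, each surviving contribution is canonically isomorphic to $\Q_\M$ as an $\M$-module, so $\Q^\N \simeq \Q_\M^{\oplus N}$ as $\M$-modules.

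On the Hecke-algebra side, $\Hh \simeq \Hh(n, \qr)$ is a free right $\Hh_\M$-module of rank $N$, with basis $\{T_w\}$ indexed by the same distinguished coset representatives of $S_\a$ in $S_n$. Hence $\Hh \otimes_{\Hh_\M} \Q_\M \simeq \Q_\M^{\oplus N}$ as $\M$-modules, and both sides of the claimed isomorphism have equal finite $\R$-dimension.

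To conclude, I would construct the natural map
\begin{equation*}
\phi : \Hh \otimes_{\Hh_\M} \Q_\M \to \Q^\N, \qquad h \otimes v \mapsto \tilde v \cdot h,
\end{equation*}
where $\tilde v \in \Q^\N$ is the image of $v$ under the Mackey embedding $\Q_\M \hookrightarrow \Q^\N$ coming from the trivial double coset $w = 1$, and $\cdot$ denotes the action of $\Hh = \End_\G(\Q)$ on $\Q^\N$. Well-definedness across the tensor relation follows because the embedding $\Q_\M \hookrightarrow \Q^\N$ is $\Hh_\M$-equivariant via $j : \Hh_\M \hookrightarrow \Hh$. Since the dimensions agree, it suffices to verify bijectivity, which I would establish by checking that, for each distinguished representative $w$, the image of $T_w \otimes \Q_\M$ coincides with the Mackey summand indexed by the double coset of $w$. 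This matching between Hecke basis elements and Mackey summands is the technical heart of the argument; it rests on realizing each $T_w$ as the classical intertwining operator between the two relevant parabolic inductions, which is standard for Hecke algebras of type $\textsf{A}$ but requires careful tracking of Frobenius adjunctions.
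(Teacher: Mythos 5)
Your proof is correct and very close in spirit to the paper's; the difference is mostly one of packaging. The paper does not compute $\Q^\N$ abstractly first: it directly exhibits the natural map $\xi : h \otimes f \mapsto h(i(f))$ (where $i : \Q_\M \hookrightarrow \Q$ is the extension-by-zero embedding with image the $\N$-invariant functions supported on $\P$), then shows that for each distinguished representative $d$ of $\W/\W_\M$ the composite $t_d \circ i$ is injective with image exactly the $\N$-invariant functions supported on $\P d \N$, so that the Bruhat-type cell decomposition of $\G$ immediately gives bijectivity. You instead first compute $\Q^\N \simeq \Q_\M^{\oplus N}$ from the Mackey formula applied to $\rp^\G_\P \circ \ip^\G_{\P_0}(\s^{\otimes n})$ using cuspidality of $\s$ to discard non-surviving double cosets, then match dimensions and verify that $T_w \otimes \Q_\M$ hits the $w$-summand. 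This buys you a clean conceptual picture (and cuspidality does make the Mackey count transparent), but at the cost of an extra hypothesis the paper never needs: the paper's support-theoretic argument works for an arbitrary $\Q_\M$ and does not use cuspidality anywhere. Note also that the step you describe as the "technical heart" --- matching each $T_w$ with its Mackey summand --- is precisely the content of the paper's assertion that $t_d \circ i$ has image the functions supported on $\P d \N$; the two formulations are equivalent, and if you are going to prove that matching, you might as well argue directly as the paper does and skip the Mackey/dimension detour altogether. One minor slip: once the dimensions agree, what remains is to verify injectivity \emph{or} surjectivity, not bijectivity.
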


\begin{proof}
On note $i$ le plongement canonique de $\Q_\M$ dans $\Q$, 
défini, pour tout $f\in\Q_\M$, par~:
\begin{equation*}
i(f) : g \mapsto 
\left\{
\begin{array}{ll}
f(m) & \text{si $g=mn$ avec $m\in\M$, $n\in\N$}, \\
0 & \text{si $g\notin\P$},
\end{array}
\right.
\end{equation*}
C'est un morphisme injectif de représentations de $\M$, 
tel que $i\circ k=j(k)\circ i$ pour tout $k\in\Hh_\M$. 
Son image est le sous-espace des fonctions de support inclus dans $\P$~; 
elle est invariante par $\N$. 
On note $\xi$ l'application~:
\begin{eqnarray*}
\Hh \otimes_{\Hh_{\M}} \Q_{\M} & \to & \Q \\
h\otimes f & \mapsto & h(i(f)).
\end{eqnarray*}
On note $\W$ le sous-groupe des matrices de permutation de $\GL_{n}(\F)$ 
(naturellement plongé dans $\G$) et $\SS$ l'ensemble des matrices des 
transpositions $i \leftrightarrow i+1$. 
On note $\W_\M$ le sous-groupe de $\W$ correspondant à $\M$. 

Soit $\Dd$ l'ensemble des représentants distingués de $\W/\W_\M$ dans $\W$, 
\ie que, pour tout $d\in\Dd$, l'unique élément de longueur minimale dans
$d\W_\M$ est $d$.
Fixons pour tout élément $d\in\Dd$
un élément $t_{d}\in\Hh$ de support $\P d\P$. 
Alors $\Hh$ est un $\Hh_\M$-module à droite libre de base $(t_d)_{d\in\Dd}$. 
Le morphisme $t_{d}\circ i$ est injectif, et il a pour image le sous-espace de 
$\Q^{\N}$ formé des fonctions de support inclus dans $\P d\N$. 
Ainsi $\xi$ est bijectif. 

Pour plus de détails, on pourra consulter \cite{OSf}, paragraphes 4.2 et 4.3.
\end{proof}

Appliquant le foncteur d'induction parabolique de $\M$ à $\G$ le long de 
$\P$, on obtient un iso\-mor\-phisme~:
\begin{equation}
\label{Coro0}
\Ind_{\P}^{\G}(\Q^\N) \simeq 
\Hh \otimes_{\Hh_{\M}} \Q
\end{equation}
de représentations de $\G$. 

\begin{coro}
\label{Coro1}
\begin{enumerate}
\item
Pour toute représentation $\varrho$ de $\M$, on a un isomorphisme~:
\begin{equation*}
\Hom_{\G}(\Q,\Ind_{\P}^{\G}(\varrho)) \simeq 
\Hom_{\Hh_{\M}}(\Hh,\Hom_{\M}(\Q_{\M},\varrho))
\end{equation*}
de $\Hh$-modules à droite. 
\item
Le foncteur exact $\pi\mapsto\Ind_{\P}^{\G}(\pi^\N)$ préserve la sous-catégorie 
$\Ee(\s,n)$ (voir le \S{\rm\ref{hecke}}). 
\end{enumerate}
\end{coro}

\begin{proof}
On obtient (1) par adjonction, grâce à 
la proposition \ref{isoQN}. 
Pour obtenir (2), il suffit prouver que l'image de $\Q$ par ce foncteur 
est dans $\Ee(\s,n)$, ce qui suit de \eqref{Coro0}.
\end{proof}

\subsection{Le cas $p$-adique}
\label{sec:Types}

Dans ce paragraphe, on suppose qu'on est dans le cas $p$-adique. 
On fixe, comme au paragraphe \ref{hecke}, 
un type simple maximal $(\J,\l)$ contenu dans $\s$. 
Ce type simple maximal admet une décompo\-si\-tion (non canonique)~: 
\begin{equation*}
\l=\k\otimes\s^{{\rm fin}}
\end{equation*}
où $\k$ est une $\b$-extension au sens de \cite[\S2.4]{MSt}
et $\s^{{\rm fin}}$ une représentation irréductible de $\J$ triviale sur un 
sous-groupe ouvert distingué $\J^1$ de $\J$.
Le quotient $\J/\J^1$ s'identifie (non canoniquement) à un groupe fini
$\GL_{f}(\FB)$
où $\FB$ est un corps fini de caractéristique $p$ et $f$ un entier, 
et $\s^{{\rm fin}}$ s'identifie à une représentation irréductible cuspidale de 
$\GL_{f}(\FB)$.

On note $\GB$ le groupe $\GL_{fn}(\FB)$.
L'induite parabolique $\s^{{\rm fin}}\tdt\s^{{\rm fin}}$ est une représentation de $\GB$ 
notée $\Q^{{\rm fin}}$ et son algèbre d'endomorphismes est notée 
$\Hh^{{\rm fin}}$. 
Comme au début de la section \ref{SEC3}, on a aussi une 
représentation $\Q^{{\rm fin}}_\M$ du sous-groupe de Levi 
$\MB=\GL_{fn_1}(\FB)\tdt\GL_{fn_r}(\FB)$ et une sous-algèbre 
$\Hh^{{\rm fin}}_{\M}$ de $\Hh^{{\rm fin}}$.
D'après le corollaire \ref{Coro1}, on a un isomorphisme~:
\begin{equation*}
\Hom_{\GB}(\Q^{{\rm fin}},\Ind_{\PB}^{\GB}(\varrho)) \simeq 
\Hom_{\Hh^{{\rm fin}}_{\M}}(\Hh^{{\rm fin}},\Hom_{\M}(\Q^{{\rm fin}}_{\M},\varrho))
\end{equation*}
de $\Hh^{{\rm fin}}$-modules à droite,
pour toute représentation $\varrho$ de $\MB$.

D'après \cite{MSt}, sections 2 et 5, il existe~:
\begin{enumerate}
\item
un sous-groupe ouvert compact $\BJ$ de $\G$ et un sous-groupe 
ouvert distingué $\BJ^{1}$ tels que le 
quotient $\BJ/\BJ^{1}$ s'identifie au groupe $\GB$~; 
\item
une représentation irréductible $\bk$ de $\BJ$ telle qu'on ait un isomorphisme
de représentations ~:
\begin{equation}
\label{Nicolas}
\Q \simeq \ind^{\G}_{\BJ}(\bk \otimes \Q^{{\rm fin}}) 
\end{equation}
où $\Q^{{\rm fin}}$ est vue, par inflation, comme une représentation de 
$\BJ$ triviale sur $\BJ^{1}$.
\end{enumerate}
(Dans la section 5 de \cite{MSt}, ces groupes et cette représentation sont 
notés $\BJ_{{\rm max}}^{}$, $\BJ_{{\rm max}}^1$ et $\bk_{{\rm max}}^{}$.)

L'isomorphisme \eqref{Nicolas} induit un morphisme fonctoriel d'algèbres 
de $\Hh^{{\rm fin}}$ dans $\Hh$.
Si l'on définit le foncteur exact~:
\begin{equation*}
\KM : \pi \mapsto \Hom_{\BJ^{1}}(\bk,\pi)
\end{equation*}
de la catégorie des représentations de $\G$ vers celle des représentations de 
$\GB$, 
on a un iso\-morphis\-me de $\Hh^{{\rm fin}}$-modules à droite~: 
\begin{equation}
\label{Lemme1bis}
\Hom_{\G}(\Q,\pi) \simeq \Hom_{\GB}(\Q^{{\rm fin}},\KM(\pi))
\end{equation}
pour toute repré\-sen\-ta\-tion lisse $\pi$ de $\G$.

De façon analogue, il y a un foncteur $\KM_\M$ 
de la catégorie des représentations de $\M$ vers celle des représentations de 
$\MB$ possédant les propriétés suivantes~:
\begin{enumerate}
\item
on a un iso\-morphis\-me de $\Hh_\M^{{\rm fin}}$-modules à droite~: 
\begin{equation}
\label{Lemme1ter}
\Hom_{\M}(\Q_\M,\varrho) \simeq \Hom_{\MB}(\Q_\M^{{\rm fin}},\KM_\M(\varrho))
\end{equation}
pour toute repré\-sen\-ta\-tion lisse $\varrho$ de $\M$~;
\item
si $\PB$ est le sous-groupe parabolique standard de $\GB$ 
correspondant au sous-groupe de Levi $\MB$, 
alors pour toute repré\-sen\-ta\-tion $\varrho$ de $\M$
on a (\cite[Propo\-sition 5.6]{SSb}) un isomorphisme~:
\begin{equation}
\label{Cagliostro}
\KM(\Ind_\P^\G(\varrho))\simeq\Ind_{\PB}^{\GB}(\KM_\M(\varrho))
\end{equation}
de représentations de $\GB$.
\end{enumerate}

\begin{prop}
\label{JCRomand}
On a un isomorphisme~:
\begin{equation*}
\Q_\N \simeq 
\Hh \otimes_{\Hh_{\M}} \Q_{\M}
\end{equation*}
de $\Hh$-modules à gauche et de représentations de $\M$. 
\end{prop}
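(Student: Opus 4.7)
The plan is to reduce the $p$-adic statement to the finite case (Proposition~\ref{isoQN}) via the type-theoretic functor $\KM$ of \cite{MSt}, concluding by Yoneda after a chain of functorial isomorphisms. The strategy is to test both sides of the proposed isomorphism against an arbitrary smooth representation $\varrho$ of $\M$.

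First, Frobenius reciprocity provides, functorially in $\varrho$, an isomorphism
\begin{equation*}
\Hom_{\M}(\Q_{\N},\varrho) \simeq \Hom_{\G}(\Q,\ip_{\a}(\varrho))
\end{equation*}
(with the usual normalization absorbing the modular character of $\P$). Next, the type isomorphism \eqref{Lemme1bis} identifies the right-hand side with $\Hom_{\GB}(\Q^{{\rm fin}},\KM(\ip_{\a}(\varrho)))$, and the compatibility \eqref{Cagliostro} of $\KM$ with parabolic induction rewrites this as $\Hom_{\GB}(\Q^{{\rm fin}},\Ind_{\PB}^{\GB}(\KM_{\M}(\varrho)))$. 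Then the finite case, namely Corollary~\ref{Coro1}(1) applied to the pair $(\GB,\MB)$, combined with the $\M$-level analogue \eqref{Lemme1ter} of \eqref{Lemme1bis}, yields a natural isomorphism
\begin{equation*}
\Hom_{\M}(\Q_{\N},\varrho) \simeq \Hom_{\Hh^{{\rm fin}}_{\M}}(\Hh^{{\rm fin}},\Hom_{\M}(\Q_{\M},\varrho)).
\end{equation*}

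The last step, which I expect to be the main obstacle, is to upgrade this to an isomorphism with $\Hom_{\Hh_{\M}}(\Hh,\Hom_{\M}(\Q_{\M},\varrho))$, which by tensor--Hom adjunction equals $\Hom_{\M}(\Hh\otimes_{\Hh_{\M}}\Q_{\M},\varrho)$; Yoneda will then produce the desired bimodule isomorphism. The content here is a Hecke-algebra compatibility: one must match the action of the lattice part of the affine Hecke algebra $\Hh\simeq\widetilde{\Hh}(n,\qr)$ --- the generators $\X_{1},\dots,\X_{n}$ of \eqref{Hodkann} --- with the action on $\Hom_{\M}(\Q_{\M},\varrho)$ coming from unramified twists of the cuspidal support. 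Once this compatibility is established via the Bernstein--Lusztig presentation, which exhibits $\Hh$ as a free module over $\Hh^{{\rm fin}}$ on the lattice $\ZZ^{n}$, the identification of the two Hom-spaces follows and the proposition is proved.
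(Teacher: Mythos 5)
Your chain of reductions is precisely the paper's: Frobenius reciprocity for $\ip_\a$, the type functor $\KM$ via \eqref{Lemme1bis}, the compatibility \eqref{Cagliostro} with parabolic induction, the finite-case Corollary~\ref{Coro1}(1), and the Levi-level identification \eqref{Lemme1ter}, landing on $\Hom_{\Hh^{{\rm fin}}_{\M}}(\Hh^{{\rm fin}},\Hom_{\M}(\Q_{\M},\varrho))$; and you correctly identify the remaining step as replacing the pair $(\Hh^{{\rm fin}},\Hh^{{\rm fin}}_{\M})$ by $(\Hh,\Hh_{\M})$. Your sketch of that last step, however, envisages matching the $\X_j$-action against something ``coming from unramified twists of the cuspidal support,'' which suggests an explicit computation. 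The paper's resolution is purely formal and requires no such tracking: under \eqref{Hodkann}, the lattice monomials $\X^{\a}=\X_1^{\a_1}\cdots\X_n^{\a_n}$ ($\a\in\ZZ^n$) give a basis of $\Hh_{\M}$ over $\Hh^{{\rm fin}}_{\M}$ and a basis of $\Hh$ over $\Hh^{{\rm fin}}$ simultaneously, so the multiplication map
\begin{equation*}
\Hh^{{\rm fin}} \otimes_{\Hh^{{\rm fin}}_{\M}} \Hh_{\M} \longrightarrow \Hh
\end{equation*}
is an isomorphism of $(\Hh^{{\rm fin}},\Hh_{\M})$-bimodules, and the desired $\Hom$-space identification is then tensor--Hom adjunction,
\begin{equation*}
\Hom_{\Hh_{\M}}(\Hh, -) \simeq \Hom_{\Hh^{{\rm fin}}_{\M}}(\Hh^{{\rm fin}}, -).
\end{equation*}
Note that the \emph{double} freeness, with the same monomial basis $(\X^\a)$ on both sides, is what makes the multiplication map bijective; the one-sided freeness of $\Hh$ over $\Hh^{{\rm fin}}$ that you invoke is only half of what is used. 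With this bimodule isomorphism substituted for your envisaged compatibility check, your argument becomes complete and coincides with the paper's.
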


\begin{proof}
Appliquons \eqref{Lemme1bis} à la représentation 
$\Ind_\P^\G(\varrho)$ où $\varrho$ est une 
$\R$-repré\-sen\-ta\-tion lisse de $\M$. 
Compte tenu de \eqref{Cagliostro}, on a des isomorphismes 
de $\Hh^{{\rm fin}}$-modules à droite~: 
\begin{equation*}
\Hom_{\G}(\Q,\Ind_\P^\G(\varrho)) \simeq 
\Hom_{\GB}(\Q^{{\rm fin}},\KM(\Ind_\P^\G(\varrho))) 
\simeq \Hom_{\GB}(\Q^{{\rm fin}},\Ind_{\PB}^{\GB}(\KM_\M(\varrho))).
\end{equation*}
D'après le corollaire \ref{Coro1} et \eqref{Lemme1ter}, 
on a des isomorphismes 
de $\Hh^{{\rm fin}}$-modules à droite~:
\begin{eqnarray*}
\Hom_{\GB}(\Q^{{\rm fin}},\Ind_{\PB}^{\GB}(\KM_\M(\varrho))) 
& \simeq &
\Hom_{\Hh^{{\rm fin}}_{\M}}(\Hh^{{\rm fin}},
\Hom_{\MB}(\Q^{{\rm fin}}_\M,\KM_\M(\varrho))) \\
& \simeq & 
\Hom_{\Hh^{{\rm fin}}_{\M}}(\Hh^{{\rm fin}},\Hom_{\M}(\Q_{\M},\varrho))
\end{eqnarray*}
et ce dernier est isomorphe à 
$\Hom_{\M}(\Hh^{{\rm fin}}\otimes_{\Hh^{{\rm fin}}_{\M}}\Q_{\M},\varrho)$.
Ceci étant valable pour toute représen\-ta\-tion lisse $\varrho$ de 
$\M$, on en déduit un isomorphisme~:
\begin{equation}
\label{attente}
\Q_\N \simeq 
\Hh^{{\rm fin}} \otimes_{\Hh^{{\rm fin}}_{\M}} \Q_{\M}
\end{equation}
de $\Hh^{{\rm fin}}$-modules à gauche et de représentations de $\M$. 

Avec les notations du paragraphe \ref{hecke}, 
et compte tenu de l'isomorphisme \eqref{Hodkann}, 
le $\Hh^{{\rm fin}}_{\M}$-module à gauche $\Hh_\M$ est libre de base 
$(\X^\a)_{\a\in\ZZ^n}$ avec $\X^\a=\X_1^{\a_1}\dots\X_n^{\a_n}$ pour 
$\a=(\a_1,\dots,\a_n)\in\ZZ^n$, et c'est aussi une base du 
$\Hh^{{\rm fin}}$-module à droite $\Hh$.
Ainsi l'application naturelle~:
\begin{eqnarray*}
\Hh^{{\rm fin}} \otimes_{\Hh^{{\rm fin}}_{\M}} \Hh_\M & \to & \Hh \\
(h,k) & \mapsto & h*k
\end{eqnarray*}
(où $*$ désigne la multiplication dans $\Hh$) 
est un isomorphisme de $(\Hh^{{\rm fin}},\Hh_\M)$-bimodules.
Comme $\Q_\M$ est un $\Hh_\M$-module à gauche, 
on obtient, grâce à \eqref{attente}, l'isomorphisme annoncé.
\end{proof}

Comme dans le cas fini, on en déduit le corollaire suivant.

\begin{coro}
\label{stableEsn}
\begin{enumerate}
\item
Pour toute représentation $\varrho$ de $\M$, on a un isomorphisme~:
\begin{equation*}
\Hom_{\G}(\Q,\Ind_{\P}^{\G}(\varrho)) \simeq 
\Hom_{\Hh_{\M}}(\Hh,\Hom_{\M}(\Q_{\M},\varrho))
\end{equation*}
de $\Hh$-modules à droite. 
\item
Le foncteur exact $\pi\mapsto\Ind_{\P}^{\G}(\pi_\N)$ préserve la sous-catégorie 
$\Ee(\s,n)$ (voir le \S{\rm\ref{hecke}}). 
\end{enumerate}
\end{coro}

\begin{rema}
\label{vaval}
D'après \cite{MSt}, la représentation $\Q$ s'identifie à 
l'induite compacte d'un type semi-simple (\textit{ibid.}, \S2.8-2.9).
D'après la propriété de paire couvrante de ces types  semi-simples 
(\textit{ibid.}, \S2.7), 
on a un isomorphisme de $\Hh_\M$-modules à droite~:
\begin{equation*}
\Hom_{\G}(\Q,\pi) \simeq \Hom_{\M}(\Q_{\M},\pi_\N)
\end{equation*}
pour toute représentation $\pi$ de $\G$, 
où $\pi_\N$ désigne la restriction parabolique (normalisée) de $\pi$.
\end{rema}



\subsection{Un corollaire}

On suppose à nouveau qu'on est, indistinctement, dans le cas fini ou $p$-adique. 
Des corollaires \ref{Coro1} et \ref{stableEsn} on déduit le résultat suivant. 

\begin{coro}
\label{Tet}
Le morphisme de groupes $\FM:\RA_\s\to\MA_\s$ 
défini en \eqref{Falbala} est un morphisme d'algèbres.
\end{coro}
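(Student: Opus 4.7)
The plan is to show, by linearity, that $\FM(\pi_1 \times \pi_2) = \FM(\pi_1) \times \FM(\pi_2)$ in $\MA_\s$ for all irreducibles $\pi_1 \in \XA_{\s,n_1}$ and $\pi_2 \in \XA_{\s,n_2}$, where $n = n_1 + n_2$. The first step is to reduce to the subcase $\pi_i \in \XA_{\s,n_i}^*$: if one of the $\pi_i$ is not in $\XA_\s^*$, then the right-hand side vanishes since $\FM$ kills $\JA_\s$, while the left-hand side vanishes because every irreducible subquotient of $\pi_1 \times \pi_2$ then has cuspidal support not in $\Dive(\Om_\s)$ and hence lies in $\JA_\s$ (by a geometric-lemma/Mackey argument in the spirit of Lemma \ref{lemmeclef}). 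Equivalently, this first step verifies that $\JA_\s$ is an ideal of $\RA_\s$, which is implicit in Corollary \ref{astablej}.

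Assuming now $\pi_i \in \XA_{\s,n_i}^*$, each $\pi_i$ is an irreducible quotient of $\Si^{\times n_i}$ (this follows from Theorem \ref{qptf}, the image $\FM(\pi_i)$ being a non-zero Hecke module). Hence $\pi_1 \otimes \pi_2$ is a quotient of $\Q_\M = \Si^{\times n_1} \otimes \Si^{\times n_2}$, and the exactness of $\Ind_\P^\G$ realises $\pi_1 \times \pi_2$ as a quotient of $\Q = \Si^{\times n}$. In particular $\pi_1 \times \pi_2 \in \Ee(\s,n)$, and so do all its irreducible subquotients, so that $\FM$ is exact on the full subcategory it generates (Theorem \ref{qptf}(2)).

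The heart of the argument is then a direct application of the adjunction isomorphism supplied by Corollary \ref{Coro1}(1) in the finite case and Corollary \ref{stableEsn}(1) in the $p$-adic case, to the representation $\varrho = \pi_1 \otimes \pi_2$ of $\M$. This yields an isomorphism of $\Hh$-modules
\begin{equation*}
\Hom_\G(\Q, \pi_1 \times \pi_2) \simeq \Hom_{\Hh_\M}(\Hh, \Hom_\M(\Q_\M, \pi_1 \otimes \pi_2)),
\end{equation*}
whose inner Hom unfolds as $\FM(\pi_1) \otimes \FM(\pi_2)$ and whose outer Hom is, by the very definition of ${\bf i}_\a$ and of the multiplication on $\MA_\s$ in \S\ref{hecke}, equal to $\FM(\pi_1) \times \FM(\pi_2)$.

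Finally, passing to classes in $\MA_\s$: by the exactness of $\FM$ on $\Ee(\s,n)$, the class of $\Hom_\G(\Q, \pi_1 \times \pi_2)$ equals $\sum_i \FM(\tau_i) = \FM(\pi_1 \times \pi_2)$, summed over the irreducible subquotients $\tau_i$ of $\pi_1 \times \pi_2$ (those in $\JA_\s$ contributing zero). Combined with the $\Hh$-module isomorphism above, this gives $\FM(\pi_1 \times \pi_2) = \FM(\pi_1) \times \FM(\pi_2)$, as required. I do not anticipate any serious obstacle here: the whole argument is a formal consequence of Corollaries \ref{Coro1} and \ref{stableEsn}, and the only care needed is the book-keeping which makes the passage from the module-level isomorphism to the Grothendieck-group identity legitimate.
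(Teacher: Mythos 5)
Your proof follows the paper's approach exactly: the paper's own proof is just the observation that the result follows from the adjunction isomorphisms of Corollaries \ref{Coro1}(1) and \ref{stableEsn}(1), which is precisely the heart of your argument, and your unwinding of the inner and outer Homs and the bookkeeping for the passage to Grothendieck classes are correct. One small remark: your first reduction step is superfluous and its geometric-lemma justification is not really needed, since the adjunction of Corollaries \ref{Coro1}(1)/\ref{stableEsn}(1) holds for an arbitrary representation $\varrho$ of $\M$, and $\pi_1\times\pi_2$ already lies in $\Ee(\s,n)$ as soon as each $\pi_i$ is a subquotient (not necessarily a quotient) of $\Si^{\times n_i}$; hence the computation of your third and fourth paragraphs applies verbatim to any $\pi_i\in\XA_{\s,n_i}$, and when some $\FM(\pi_i)$ vanishes it directly gives $\FM([\pi_1\times\pi_2])=0$, so the "ideal" property of $\JA_\s$ comes out for free.
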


Si l'on note $\AA_\s$ la $\ZZ$-algèbre quotient de $\RA_\s$ par $\JA_\s$, 
le morphisme $\FM$ induit un isomorphisme~: 
\begin{equation}
\label{isoAAfMA}
\fm : \AA_\s \to \MA_\s
\end{equation}
de $\ZZ$-algèbres.
D'après le corollaire \ref{astablej}, l'involution $\AC$ induit une involution 
sur $\AA_\s$, notée $\ac$.
On a défini une involution $\inv$ 
sur $\MA_\s$ au paragraphe \ref{hecke}.
Nous étudions maintenant le comportement de $\fm$
vis-à-vis de ces deux invo\-lu\-tions.
Plus précisément, 
nous avons deux isomorphismes d'algèbres $\fm\circ\ac$ et $\bt\circ\fm$ de 
$\AA_\s$ dans $\MA_\s$, que nous allons comparer. 

\subsection{Une formule pour $\FM\circ\AC$}
\label{FormuleRaviolisVapeur}


\begin{lemm}
\label{stableEsn2}
Soit $\a$ une composition de $mn$.
\begin{enumerate}
\item
Si $\a$ n'est pas de la forme $(mn_1,\dots,mn_r)$ où $(n_1,\dots,n_r)$ est une 
composition de $n$, 
alors le foncteur $\ip_\a\circ\rp_\a$ est nul sur la sous-catégorie $\Ee(\s,n)$.
\item
Dans tous les cas, le foncteur $\ip_\a\circ\rp_\a$ 
préserve la sous-catégorie $\Ee(\s,n)$.
\end{enumerate}
\end{lemm}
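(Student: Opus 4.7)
La stratégie est de ramener la partie (2) à la partie (1) combinée avec les r\'esultats ant\'erieurs sur le cas ``favorable'', et d'aborder la partie (1) directement par l'analyse du support cuspidal. Pour la partie (2), lorsque $\a=(mn_1,\dots,mn_r)$ est de la forme voulue, l'\'enonc\'e est exactement le corollaire \ref{Coro1}(2) dans le cas fini et le corollaire \ref{stableEsn}(2) dans le cas $p$-adique. Si $\a$ n'est pas de cette forme, la partie (2) r\'esultera trivialement de la partie (1), puisque le foncteur nul pr\'eserve toute sous-cat\'egorie.

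Pour d\'emontrer la partie (1), on se ram\`ene d'abord \`a l'\'enonc\'e $\rp_\a(\Q)=0$. Le foncteur $\rp_\a$ est exact et, \'etant adjoint \`a gauche, commute aux sommes directes arbitraires; comme $\Ee(\s,n)$ est par d\'efinition stable par sous-quotients et form\'e de sous-quotients de sommes directes de copies de $\Q$, la nullit\'e de $\rp_\a$ sur $\Q$ s'\'etend \`a toute cette sous-cat\'egorie, et donc \`a $\ip_\a\circ\rp_\a$. Pour \'etablir $\rp_\a(\Q)=0$ lorsque $\a=(a_1,\dots,a_r)$ poss\`ede une composante $a_i$ qui n'est pas multiple de $m$, on utilise que $\Q$ est de type fini (comme induite compacte d'un type dans le cas $p$-adique, comme repr\'esentation de dimension finie dans le cas fini) et que la restriction parabolique pr\'eserve le caract\`ere de type fini. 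Si l'on suppose $\rp_\a(\Q)\neq 0$, le lemme de Zorn fournit un quotient irr\'eductible $\tau=\tau_1\otimes\cdots\otimes\tau_r$, puis la r\'eciprocit\'e de Frobenius donne un morphisme non nul $\Q\to\ip_\a(\tau)$ dont l'image $\pi_0$ est \`a la fois sous-repr\'esentation de $\ip_\a(\tau)$ et quotient de $\Q$, donc appartient \`a $\Ee(\s,n)$. En choisissant un quotient irr\'eductible $\pi_1$ de $\pi_0$, qui existe puisque $\pi_0$ est de type fini, on voit que $\pi_1\in\XA_{\s,n}^{*}$ par le th\'eor\`eme \ref{qptf}, donc que $\cusp(\pi_1)$ est une somme de $n$ repr\'esentations de degr\'e $m$, toutes inertiellement \'equivalentes \`a $\s$; or $\pi_1$ est aussi un sous-quotient irr\'eductible de $\ip_\a(\tau)$, de sorte que $\cusp(\pi_1)=\sum_i\cusp(\tau_i)$. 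Chaque $\cusp(\tau_i)$ est alors une sous-somme de $\cusp(\pi_1)$, et $a_i=\deg(\tau_i)$ doit \^etre un multiple de $m$, ce qui contredit l'hypoth\`ese faite sur $\a$.

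L'essentiel du travail se concentre dans la partie (1), et l'ingr\'edient technique principal est la pr\'eservation du caract\`ere de type fini par la restriction parabolique, combin\'ee \`a la compatibilit\'e du support cuspidal avec induction et restriction paraboliques; ces deux propri\'et\'es valant uniform\'ement dans les cas fini et $p$-adique, la preuve se fait sans dissocier les deux situations. Le seul point d\'elicat est la justification de l'existence d'un quotient irr\'eductible dans le cas $p$-adique, qui d\'ecoule du lemme de Zorn d\`es que l'on sait que les modules consid\'er\'es sont de type fini.
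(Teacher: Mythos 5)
Ta d\'emonstration est correcte, et elle suit la m\^eme id\'ee directrice que celle du texte : la partie (2) dans le cas favorable est exactement le corollaire \ref{Coro1} (cas fini) ou \ref{stableEsn} (cas $p$-adique), et la partie (1) repose sur le fait que le support cuspidal des objets de $\Ee(\s,n)$ ne fait intervenir que des repr\'esentations de degr\'e multiple de $m$. La diff\'erence est de degr\'e d'explicitation : l\`a o\`u le texte se contente d'affirmer en une phrase que $\ip_\a\circ\rp_\a$ est nul sur $\Ee(\s,n)$ parce que le support cuspidal des irr\'eductibles de cette sous-cat\'egorie est contenu dans \eqref{Corinne}, tu ram\`enes soigneusement l'\'enonc\'e \`a $\rp_\a(\Q)=0$ (par exactitude et commutation aux sommes directes, ce qui est la bonne fa\c con de couvrir les objets non n\'ecessairement de longueur finie de $\Ee(\s,n)$) puis tu \'etablis cette nullit\'e par r\'eciprocit\'e de Frobenius et un argument de support cuspidal. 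Ta version rend transparent un point que le texte laisse implicite, \`a savoir le passage des irr\'eductibles \`a tous les objets de la sous-cat\'egorie.

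Un petit point \`a mentionner : l'\'etape $\pi_1\in\XA_{\s,n}^{*}$ ``par le th\'eor\`eme \ref{qptf}'' est un raccourci. Le th\'eor\`eme tel qu'\'enonc\'e dit que $\FM$ induit une bijection de $\XA_{\s,n}^{*}$ sur les modules simples, mais ne dit pas litt\'eralement qu'une irr\'eductible $\pi$ avec $\Hom_\G(\Q,\pi)\neq0$ est dans $\XA_{\s,n}^{*}$ ; ce fait (que tout quotient irr\'eductible de $\Q$ a son support cuspidal dans $\Dive(\Om_\s)$) est bien vrai et rel\`eve de la th\'eorie des types, mais m\'eriterait d'\^etre explicit\'e -- m\^eme si le texte lui-m\^eme invoque une hypoth\`ese du m\^eme ordre en affirmant sans preuve que le support cuspidal des irr\'eductibles de $\Ee(\s,n)$ est contenu dans \eqref{Corinne}. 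D'ailleurs, pour tirer la contradiction, la version plus faible (support cuspidal contenu dans \eqref{Corinne}, donc form\'e de repr\'esentations de degr\'e multiple de $m$) suffirait enti\`erement et aurait exactement le m\^eme statut que l'affirmation du texte.
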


\begin{proof}
Si $\a$ est de la forme $(mn_1,\dots,mn_r)$, c'est une conséquence des 
corollaires \ref{Coro1} et \ref{stableEsn}.
Sinon, la restriction de $\ip_\a\circ\rp_\a$ à $\Ee(\s,n)$ est le foncteur 
nul, car le support cuspidal d'une représentation irréductible dans 
$\Ee(\s,n)$ n'est composé que de représentations dans \eqref{Corinne}.
\end{proof}

Rappelons 
que, pour toute composition $\g=(n_1,\dots,n_r)$ de $n$, 
on a défini des foncteurs 
${\bf r}_\g$ et ${\bf i}_\g$ au paragraphe \ref{hecke}. 
Leur composé ${\bf i}_\g\circ{\bf r}_\g$ 
définit donc un endomorphisme de groupe de $\MA_{\s}$.

Ecrivons $m\cdot\g=(mn_1,\dots,mn_r)$, qui est une composition de $mn$. 

\begin{lemm}
\label{calculdelta}
\begin{enumerate}
\item
Pour toute composition $\g$ de $n$
et toute représentation $\pi\in\XA_{\s,n}$, on a~:
\begin{equation*}
\FM(\ip_{m\cdot\g}\circ\rp_{m\cdot\g}(\pi)) \simeq {\bf i}_\g\circ{\bf r}_\g(\FM(\pi))
\end{equation*}
dans la catégorie des $\Hh(\s,n)$-modules à droite. 
\item
Pour toute représentation $\pi\in\XA_{\s,n}$, on a~:
\begin{equation*}
\FM(\AC(\pi)) = \sum\limits_{\g}
(-1)^{r(\g)}\cdot{\bf i}_\g\circ{\bf r}_\g(\FM(\pi))
\end{equation*}
dans $\MA_\s$, où $\g$ décrit les compositions de $n$.
\end{enumerate}
\end{lemm}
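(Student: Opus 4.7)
The plan is to deduce both statements directly from the machinery established in Section \ref{SEC3}. For part (1), write $\g=(n_1,\dots,n_r)$ and set $\M=\GL_{mn_1}(\D)\tdt\GL_{mn_r}(\D)$, $\P=\P_{m\cdot\g}$. Applying the first part of Corollary \ref{stableEsn} (in the $p$-adic case, or Corollary \ref{Coro1} in the finite case) to the representation $\varrho=\rp_{m\cdot\g}(\pi)$ of $\M$ gives an isomorphism of right $\Hh$-modules
\[
\FM(\ip_{m\cdot\g}\circ\rp_{m\cdot\g}(\pi))
\;\simeq\;
\Hom_{\Hh_{\M}}\bigl(\Hh,\Hom_{\M}(\Q_{\M},\rp_{m\cdot\g}(\pi))\bigr).
\]
It remains to identify the inner Hom with the restriction of $\FM(\pi)$ along $j\colon\Hh_\M\hookrightarrow\Hh$. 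In the $p$-adic case this is exactly the content of Remark \ref{vaval}; in the finite case the same identification is furnished by the computation used to prove Corollary \ref{Coro1} (applying adjunction to the equality $\Q^\N\simeq\Hh\otimes_{\Hh_\M}\Q_\M$). Under the definition of ${\bf i}_\g=\Hom_{\Hh_\M}(\Hh,-)$ and ${\bf r}_\g$ as restriction along $j$, this yields the claimed isomorphism ${\bf i}_\g\circ{\bf r}_\g(\FM(\pi))$.

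For part (2) I would argue by cases. If $\pi\notin\XA_{\s,n}^{*}$, then $\pi\in\JA_\s$, hence $\FM(\pi)=0$, while $\AC(\pi)\in\JA_\s$ by Corollary \ref{astablej}, so $\FM(\AC(\pi))=0$ as well and the identity holds trivially. If on the other hand $\pi\in\XA_{\s,n}^{*}$, then $\pi$ lies in $\Ee(\s,n)$, and expanding the definition of $\AC$ together with the additivity of $\FM$ on the Grothendieck group gives
\[
\FM(\AC(\pi))=\sum_{\a}(-1)^{r(\a)}\FM(\ip_\a\circ\rp_\a(\pi)),
\]
the sum ranging over compositions $\a$ of $mn$. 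By Lemma \ref{stableEsn2}(1) the summand vanishes whenever $\a$ is not of the form $m\cdot\g$ for some composition $\g$ of $n$; for the remaining compositions one has $r(m\cdot\g)=r(\g)$, and part (1) rewrites the contribution as $(-1)^{r(\g)}{\bf i}_\g\circ{\bf r}_\g(\FM(\pi))$, which is exactly the announced formula.

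There is no genuine obstacle left in this lemma; the real work has already been done in the coinvariant computations of Propositions \ref{isoQN} and \ref{JCRomand} and in Remark \ref{vaval}, which together ensure the module-theoretic compatibility between parabolic induction/restriction on the group side and induction/restriction between $\Hh$ and its subalgebras $\Hh_\M$. The only point to watch is that the isomorphism in (1) must be taken as an isomorphism of $\Hh$-modules (not merely of vector spaces), which is automatic from the adjointness statements cited above; once this is in place, (2) is obtained by a purely combinatorial truncation of the alternating sum.
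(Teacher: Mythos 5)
Your argument is correct and follows the same route as the paper: part (1) composes the two identifications from Corollary \ref{stableEsn} (resp.\ \ref{Coro1} in the finite case) and Remark \ref{vaval}, and part (2) then drops out of Lemma \ref{stableEsn2} together with the exactness of $\FM$ on $\Ee(\s,n)$. The case split in your proof of (2) is harmless but not needed: every $\pi\in\XA_{\s,n}$ (not only those in $\XA_{\s,n}^{*}$) is a subquotient of $\Si^{\times n}$ and hence lies in $\Ee(\s,n)$, so the second branch of your argument already covers the first.
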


\begin{proof}
La partie (2) est une conséquence du lemme \ref{stableEsn2} et de la partie (1). 

D'après le corollaire \ref{stableEsn}, on a un isomorphisme
de $\Hh$-modules à droite~:
\begin{equation*}
\Hom_{\G}(\Q,\ip_{m\cdot\g}(\varrho)) \simeq 
 {\bf i}_\g(\Hom_{\M}(\Q_{\M},\varrho))
\end{equation*}
pour toute représentation $\varrho$ de $\M$.
D'après la remarque \ref{vaval}, on a également un isomorphisme
de $\Hh_\M$-modules à droite~:
\begin{equation}
\label{Gilles}
{\bf r}_\g(\Hom_{\G}(\Q,\pi)) \simeq \Hom_{\M}(\Q_{\M},\rp_{m\cdot\g}(\pi))
\end{equation}
pour toute représentation $\pi$ de $\G$. 
En composant les deux, on obtient l'identité voulue.
\end{proof}



\section{Preuve du théorème \ref{ASG}}
\label{four}

Les notations $\s$ et $m,n$ ont le même sens qu'au début de la section \ref{SEC3}.

\subsection{Changement de groupe}
\label{GreatExpectations}

On permet maintenant de changer le corps $\F$ fixé dans l'introduction~:
fixons un corps $\F'$ de même nature que $\F$,
\ie fini de caractéristique $p$ dans le cas fini, 
et localement compact non archimédien de caractéristique résiduelle $p$ 
dans le cas $p$-adique.
On fixe une représentation irréductible cuspidale $\s'$ d'un groupe 
$\GL_{m'}(\D')$, où $\D'$ est une $\F'$-algèbre à division centrale et 
$m'\>1$, telle que $q_{\s'}=\qr$ dans $\R$.
Comme en \eqref{Falbala}, on associe à $\s'$ un morphisme d'algèbres~:
\begin{equation*}
\FM' : \RA_{\s'} \to \MA_{\s'}.
\end{equation*}
Cette dernière est égale à $\MA_\s$ 
(que l'on note donc $\MA$) puisque $q_{\s'}=\qr$. 
Le morphisme $\FM'$ induit une bijection entre $\XA_{\s'}^*$ et l'ensemble des 
objets simples de $\MA$.
On note $\AC'$ l'involution sur $\RA_{\s'}$. 

D'après \cite{MSt} (voir aussi \cite[\S5.2]{MSc}), 
il y a pour tout entier $n\>0$ une bijection~:
\begin{equation*}
\boldsymbol{\Phi}_{n}^{} : \XA_{\s,n}^* \to \XA_{\s',n}^*
\end{equation*}
compatible au support cuspidal. 
En prenant la réunion sur $n$, 
on obtient une bijection $\boldsymbol{\Phi}$ de 
$\XA_{\s}^*$ dans $\XA_{\s'}^*$.
Pour toute représentation irréductible $\pi\in\XA_{\s}^*$, 
on a l'identité $\FM'(\boldsymbol{\Phi}(\pi))=\FM(\pi)$.

\begin{prop}
\label{ChGrFD}
Etant donnée $\pi\in\XA_{\s}^*$, 
notons $\pi'$ son image par $\boldsymbol{\Phi}$.
On a l'identité~: 
\begin{equation*}
\FM'(\AC'(\pi'))=\FM(\AC(\pi)).
\end{equation*}
\end{prop}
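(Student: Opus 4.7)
The plan is to transport both $\FM(\AC(\pi))$ and $\FM'(\AC'(\pi'))$ to the Hecke-algebra level via Lemma \ref{calculdelta}(2), where the situation depends on $\s$ and $\s'$ only through the common parameter $q_\s=q_{\s'}$.

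First I would apply Lemma \ref{calculdelta}(2) to $\pi\in\XA_{\s,n}^*$, which yields
\begin{equation*}
\FM(\AC(\pi)) = \sum_{\g} (-1)^{r(\g)}\cdot{\bf i}_\g\circ{\bf r}_\g(\FM(\pi))
\end{equation*}
in $\MA_\s$, the sum running over compositions $\g$ of $n$; the same lemma applied to $\pi'\in\XA_{\s',n}^*$ gives the analogous expression for $\FM'(\AC'(\pi'))$ in $\MA_{\s'}$, with primed functors attached to $\Hh(\s',n)$ in place of $\Hh(\s,n)$.

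Next, because $q_{\s'}=q_\s$, the isomorphisms \eqref{Hodkann} identify $\Hh(\s,n)$ and $\Hh(\s',n)$ with the same abstract Hecke algebra of parameter $q_\s$, and the sub-algebras $\Hh(\s,\a)$ and $\Hh(\s',\a)$ correspond under this identification, since paragraph \ref{hecke} defines both by the same subsets of generators (the $\SS_i$ with $i$ avoiding the partial sums of $\a$, plus all $\X_j$ in the $p$-adic case). Consequently $\MA_\s=\MA_{\s'}=\MA$ and the two families of functors ${\bf i}_\g\circ{\bf r}_\g$ coincide as endomorphisms of $\MA$. Combined with the defining relation $\FM'(\pi')=\FM(\pi)$ of $\boldsymbol{\Phi}$ recalled just above the statement, this yields termwise equality of the two expansions and hence the proposition.

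The argument is essentially bookkeeping: Lemma \ref{calculdelta}(2) has already done the hard analytic work of expressing $\FM\circ\AC$ in terms that involve $\s$ only through the Hecke algebra and its parameter. The one point worth double-checking is the compatibility of the sub-algebras $\Hh(\s,\a)$, $\Hh(\s',\a)$ under the parameter-preserving identification \eqref{Hodkann}, but this is immediate from their explicit description in paragraph \ref{hecke}.
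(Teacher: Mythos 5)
Your argument is correct and follows the same route as the paper: apply Lemma~\ref{calculdelta}(2) to both $\pi$ and $\pi'$, then use $\FM'(\pi')=\FM(\pi)$ and the identification of the Hecke algebras (and their parabolic subalgebras) under the equality $q_{\s'}=q_\s$ to conclude. Your explicit check that the subalgebras $\Hh(\s,\a)$ and $\Hh(\s',\a)$ correspond under \eqref{Hodkann} is a minor elaboration of a point the paper leaves implicit, but the structure of the proof is the same.
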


\begin{proof}
D'après le lemme \ref{calculdelta}, on a~:
\begin{eqnarray*}
\FM'(\AC'(\pi')) & = & \sum\limits_{\g}
(-1)^{r(\g)}\cdot{\bf i}_\g\circ
{\bf r}_\g(\FM'(\pi')) \\
& = & \sum\limits_{\g}
(-1)^{r(\g)}\cdot{\bf i}_\g\circ
{\bf r}_\g(\FM(\pi)) 
\end{eqnarray*}
(où $\g$ décrit les compositions de $n$)
et cette dernière somme est égale à $\FM(\AC(\pi))$.
\end{proof}

\begin{coro}
\label{COROCHGTGP}
Si le théorème \ref{ASG} est vrai pour toute représentation 
$\pi'\in\XA_{\s'}^*$, 
alors il est vrai pour toute représentation $\pi\in\XA_{\s}^*$.
\end{coro}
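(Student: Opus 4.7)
Le plan est de composer deux r�sultats d�j� �tablis. D'une part, la proposition \ref{RED3} traduit le th�or�me \ref{ASG} pour une repr�sentation irr�ductible $\pi\in\XA_{\s,n}^*$ en la simplicit� du module $(-1)^n\cdot\FM(\AC(\pi))$ dans $\MA_\s$. D'autre part, la proposition \ref{ChGrFD} assure que la quantit� $\FM(\AC(\pi))$ est transport�e par le changement de groupe $\s\mapsto\s'$ via la bijection $\boldsymbol{\Phi}$. Le corollaire en d�coulera par simple composition de ces deux �nonc�s.

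Concr�tement, je partirais d'une repr�sentation irr�ductible $\pi\in\XA_{\s,n}^*$ pour un entier $n\>0$, et je poserais $\pi'=\boldsymbol{\Phi}_n(\pi)\in\XA_{\s',n}^*$. L'hypoth�se $q_{\s'}=\qr$ garantit, via l'isomorphisme \eqref{Hodkann}, que $\MA_{\s'}$ s'identifie � $\MA_\s$~; notons simplement $\MA$ ce groupe ab�lien commun. Par hypoth�se, le th�or�me \ref{ASG} est vrai pour $\pi'$, donc la proposition \ref{RED3} appliqu�e au couple $(\s',n)$ entra�ne que $(-1)^n\cdot\FM'(\AC'(\pi'))$ est un objet simple de $\MA$. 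La proposition \ref{ChGrFD} livre alors l'�galit� $\FM'(\AC'(\pi'))=\FM(\AC(\pi))$, d'o� la simplicit� de $(-1)^n\cdot\FM(\AC(\pi))$ dans $\MA$. Une derni�re application de la proposition \ref{RED3}, cette fois au couple $(\s,n)$, permettra de conclure que le th�or�me \ref{ASG} est vrai pour $\pi$.

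Il n'y a pas d'obstacle r�el � franchir ici~: la difficult� a d�j� �t� absorb�e en amont. Le point crucial est le lemme \ref{calculdelta}, qui exprime $\FM(\AC(\pi))$ comme la somme altern�e $\sum_{\g}(-1)^{r(\g)}\cdot{\bf i}_\g\circ{\bf r}_\g(\FM(\pi))$ ne d�pendant que du param�tre $\qr$ (via l'alg�bre de Hecke et ses foncteurs d'induction et restriction). Combin� � la bijection $\boldsymbol{\Phi}$ compatible au support cuspidal issue de \cite{MSt}, qui satisfait $\FM'\circ\boldsymbol{\Phi}=\FM$, il rend la proposition \ref{ChGrFD} automatique, et par suite le pr�sent corollaire.
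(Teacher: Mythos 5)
Votre preuve est correcte et suit exactement la m\^eme d\'emarche que celle de l'article : on applique la proposition \ref{ChGrFD} pour transporter $\FM(\AC(\pi))$ vers $\FM'(\AC'(\pi'))$, puis la proposition \ref{RED3} deux fois (une fois pour $\s'$, une fois pour $\s$) afin de traduire la simplicit\'e du module en la validit\'e du th\'eor\`eme \ref{ASG}. Seul l'ordre des deux premi\`eres \'etapes diff\`ere l\'eg\`erement, ce qui est sans incidence.
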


\begin{proof}
Soit $\pi\in\XA_{\s,n}^{*}$ pour un $n\>1$, 
et posons $\pi'=\boldsymbol{\Phi}(\pi)$, 
qui appartient à $\XA_{\s',n}^*$. 
D'après la proposition \ref{ChGrFD}, on a~:
\begin{equation*}
(-1)^{n}\cdot\FM'(\AC'(\pi')) = (-1)^{n}\cdot\FM(\AC(\pi)).
\end{equation*}
Par hypothèse, le théorème \ref{ASG} est vrai pour $\pi'$, 
donc le membre de gauche est un module simple dans $\MA$ 
d'après la proposition \ref{RED3}.
Le membre droite l'est donc aussi, et il s'ensuit 
(à nouveau grâce à la proposition \ref{RED3})
que le théorème \ref{ASG} est vrai pour $\pi$.
\end{proof}

\subsection{La théorie des segments}
\label{segments}
\label{ZL}
\label{clasZ}

On rappelle maintenant la notion de segment introduite
dans \cite{MSc}. 
Rappelons que nous avons défini au paragraphe \ref{DefRepCusp},
pour toute représentation cuspidale $\s$, 
un ca\-rac\-tère $\nr^{}$ (voir aussi \cite[§4.5]{MSt}).



\begin{defi}
\label{DEFSEG}
Un {\it segment} est un couple $[\s,n]$ formé d'une classe 
de représentation irré\-duc\-tible cuspidale $\s\in\CA$ et 
d'un entier $n \geq 1$. 
\end{defi}


Soit un segment $[\s,n]$. 
On note $\EuScript{Z}(\s,n)$ le caractère de $\Hh(\s,n)$ défini par~:
\begin{equation*}
\SS_i\mapsto \qr,\ i\in\{1,\dots,n-1\},
\quad
\X_j^{}\mapsto \qr^{j-1},\ j\in\{1,\dots,n\},
\end{equation*}
et on note $\EuScript{L}(\s,n)$ le caractère de $\Hh(\s,n)$ défini par~:
\begin{equation*}
\SS_i\mapsto-1,\ i\in\{1,\dots,n-1\},
\quad 
\X_j^{}\mapsto \qr^{n-j},\ j\in\{1,\dots,n\}.
\end{equation*}
On remarque que ces deux caractères sont échangés par l'involution $\bt$ 
définie au paragraphe \ref{hecke}.

On associe au segment $[\s,n]$ deux représentations irréductibles de $\G_{mn}$.

\begin{defi}[{\cite[\S7.2]{MSc}, \cite[\textsection 3]{MSf}}]
Soit un segment $[\s,n]$. 
\begin{enumerate}
\item
On note $\Z(\s,n)$
l'unique sous-représentation irréductible de $\s\times\s\nr^{}\tdt\s\nr^{n-1}$ 
corres\-pondant par \eqref{BIJ} au caractère $\EuScript{Z}(\s,n)$.
\item
On note $\L(\s,n)$
l'unique quotient irréductible de $\s\times\s\nr^{}\tdt\s\nr^{n-1}$ 
correspondant par \eqref{BIJ} au caractère $\EuScript{L}(\s,n)$.
\end{enumerate}
\end{defi}

Nous aurons besoin de la propriété suivante. 

\begin{theo}[{\cite[Lemme 9.41]{MSc}, \cite[Lemme 4.8]{MSf}}]
\label{basestd}
Supposons que $\s$ est supercuspidale,
et notons $\Dd_{\s}$ l'en\-sem\-ble des segments 
$[\s',n]$ tels que $\s'\in\Om_{\s}$ et $n\>1$.
Les produits~: 
\begin{equation}
\label{defRSTD}
\Z(\s_1,n_1)\tdt\Z(\s_r,n_r),
\end{equation}
lorsque $[\s_1,n_1]+\dots+[\s_r,n_r]$ décrit 
$\Dive(\Dd_{\s})$, 
forment une base du $\ZZ$-module libre $\RA_{\s}$.
\end{theo}

Pour comparer les morphismes d'algèbres $\fm\circ\ac$ et $\bt\circ\fm$
il suffit de les comparer sur une base de $\AA_\s$. 
D'après le théorème \ref{basestd}, il suffit 
de le faire pour les représentations $\Z(\s',n)$ avec 
$[\s',n]\in\Dd_{\s}$ et $\s$ super\-cuspidale.

\begin{rema}
\label{ZeleNonInv}
Grâce au théorème \ref{basestd} (voir aussi la remarque A.6 de \cite{MSb}), 
il y a un unique automorphisme de $\ZZ$-algèbre $\EC$ de $\RA_{\s}$ 
tel que~:
\begin{equation*}
\EC(\Z(\s',n))=\L(\s',n)
\quad\text{pour tout}\quad
[\s',n]\in\Dd_{\s}.
\end{equation*}
Dans le cas banal, $\EC$ est involutif et coïncide à un signe près avec 
l'involution d'Aubert (ap\-pen\-di\-ce de \cite{MSb}).
Dans le cas non banal, $\EC$ n'est pas toujours involutif, comme 
le montre l'exem\-ple suivant. 
Supposons que la caractéristique $\ell$ de $\R$ divise $q^2+q+1$ et 
que $\s$ est le caractère trivial de $\mult\F$, noté $1$.
Alors on a~:
\begin{eqnarray*}
\EC(\Z(1,3)) &=& \L(1,3), \\
\EC(\L(1,3)) &=& \Z(1,3)+\st_0(1), \\
\EC(\st_0(1)) &=& -2\cdot\st_0(1),
\end{eqnarray*}
ce qui montre que $\EC$ n'est pas involutif, ne préserve pas l'irréductibilité 
à un signe près et n'est pas égal à $\AC$.
\end{rema}

\subsection{Une formule pour $\AC(\Z(\s,n))$}
\label{FormuleDZ}

Dans ce paragraphe et le suivant, $\R$ est 
une clôture algébrique $\flb$ d'un corps fini de caractéris\-tique 
$\ell\neq p$. 
On fixe une clôture algébrique $\qlb$ du corps des nombres 
$\ell$-adiques.
On note~: 
\begin{equation*}
\r_\ell : \widetilde{\RA}^{{\rm ent}} \to \RA
\end{equation*}
le morphisme de réduc\-tion modulo $\ell$, où 
$\widetilde{\RA}$ désigne le groupe abélien libre engendré 
par les clas\-ses de $\qlb$-représentations irré\-ductibles des $\G_m$, $m\>0$
et $\widetilde{\RA}^{{\rm ent}}$ le sous-groupe de 
$\widetilde{\RA}$ engendré par les classes de 
représentations irréductibles entières (voir \cite[\S1.2]{MSc}). 

\begin{lemm}
\label{Drl}
Soit $\s$ une $\flb$-représentation irréductible cuspidale.
Supposons qu'il existe une $\qlb$-représentation irréductible 
cuspidale entière $\widetilde{\s}$ dont la réduction mod $\ell$ 
est égale à $\s$. 
Alors~:
\begin{equation*}
\AC(\Z(\s,n)) = (-1)^n\cdot\r_{\ell}(\L(\widetilde{\s},n))
\end{equation*}
pour tout $n\>1$.
\end{lemm}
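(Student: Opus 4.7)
The strategy is to lift everything to characteristic zero, where the formula is classical, and then reduce modulo $\ell$. Concretely, let $\widetilde{\Z}=\Z(\widetilde{\s},n)$ and $\widetilde{\L}=\L(\widetilde{\s},n)$ denote the Zelevinski and Langlands representations attached to the $\qlb$-segment $[\widetilde\s,n]$. The plan has three ingredients.

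First I would show that $\r_\ell(\widetilde{\Z})=\Z(\s,n)$ in $\RA$. Since $\widetilde{\s}$ is integral and parabolic induction preserves integrality and commutes with $\r_\ell$, the representation $\widetilde{\Z}$ — being a subquotient of $\widetilde{\s}\times\widetilde{\s}\widetilde{\nu}_\s\times\cdots\times\widetilde{\s}\widetilde{\nu}_\s^{n-1}$ — is integral. Under reduction mod $\ell$, the parameter $q_{\widetilde{\s}}$ reduces to $q_\s$, so the Hecke algebras $\Hh(\widetilde{\s},n)$ and $\Hh(\s,n)$ are compatible, and the character $\EuScript{Z}$ of the Hecke algebra defining the unique sub of the standard module is given by the same formula on both sides. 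The type-theoretic realization of the functor $\FM$ then yields $\r_\ell(\widetilde{\Z})=\Z(\s,n)$.

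Second, I would verify that $\AC$ commutes with $\r_\ell$ on $\widetilde\RA^{{\rm ent}}$. Since $\AC$ is defined by the alternating sum $\sum_\a(-1)^{r(\a)}\ip_\a\circ\rp_\a$ over compositions, and since both $\ip_\a$ and $\rp_\a$ preserve integrality and commute with $\r_\ell$, this follows term by term at the level of Grothendieck groups.

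Third, I would invoke the classical formula in characteristic zero: Aubert's theorem (and before it Zelevinski's involution, see the references to \cite{Aubert,Ze1,Tadic,BaduRena} given in the excerpt) gives
\begin{equation*}
\AC(\widetilde{\Z})=(-1)^{n}\cdot\widetilde{\L}
\end{equation*}
since the cuspidal support of $\widetilde{\Z}$ consists of $n$ terms. Combining the three ingredients,
\begin{equation*}
\AC(\Z(\s,n))=\AC(\r_\ell(\widetilde{\Z}))=\r_\ell(\AC(\widetilde{\Z}))=(-1)^n\cdot\r_\ell(\widetilde{\L}),
\end{equation*}
which is the desired identity.

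The only non-routine step is the first: one must check that the chosen irreducible subquotient of the standard module is preserved by $\r_\ell$, i.e.\ that $\r_\ell(\widetilde{\Z})$ is still irreducible and still corresponds to the character $\EuScript{Z}(\s,n)$ of $\Hh(\s,n)$. This relies on the quasi-projectivity of $\Q$ and the exactness of $\FM$ on $\Ee(\s,n)$ (theorem \ref{qptf}) together with the compatibility of the types $(\J,\l)$ and their Hecke algebra isomorphisms \eqref{Hodkann} with reduction modulo $\ell$; the remaining two steps are formal.
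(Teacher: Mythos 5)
Your proposal follows the same three-step strategy as the paper: reduce the identity to the claim $\r_\ell(\Z(\widetilde{\s},n))=\Z(\s,n)$, use that $\AC$ commutes with $\r_\ell$ since both parabolic functors do, and invoke the characteristic-zero formula $\widetilde{\AC}(\Z(\widetilde{\s},n))=(-1)^n\cdot\L(\widetilde{\s},n)$. You correctly single out the first step as the only non-routine one, and steps two and three are handled correctly.

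However, your treatment of the first step is a gesture rather than an argument, and it glosses over exactly the subtlety the paper's proof takes pains to address. The previously published proofs that $\r_\ell(\Z(\widetilde{\s},n))=\Z(\s,n)$ establish only that $\pi:=\r_\ell(\Z(\widetilde{\s},n))$ is irreducible and compute its Jacquet modules along maximal parabolics; when $q_\s\equiv 1\pmod{\ell}$ this data does \emph{not} distinguish $\Z(\s,n)$ among the irreducible subquotients of the standard module, so a further argument is needed. The paper supplies it by a lattice argument: fix a $\G$-stable $\zlb$-lattice $\L\subset\Z(\widetilde{\s},n)$ and a $\G_m$-stable lattice $\L_{\widetilde{\s}}\subset\widetilde{\s}$; the natural map
\begin{equation*}
\Hom_{\G}(\L_{\widetilde{\s}}^{\times n},\L)\otimes\flb \longrightarrow \Hom_{\G}(\s^{\times n},\pi)
\end{equation*}
is a morphism of $\Hh(\s,n)$-modules (via the reduction $\End_\G(\L_{\widetilde{\s}}^{\times n})\otimes\flb\simeq\Hh(\s,n)$), which shows $\FM(\pi)$ contains the reduction of $\EuScript{Z}(\widetilde{\s},n)$, namely $\EuScript{Z}(\s,n)$, and hence $\pi=\Z(\s,n)$. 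Your phrase ``the type-theoretic realization of $\FM$ then yields $\r_\ell(\widetilde{\Z})=\Z(\s,n)$'' would have to be unpacked in precisely this way; as written, it does not address why the irreducible reduction is $\Z(\s,n)$ rather than another subquotient of the standard module in the non-banal case.
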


\begin{proof}
Notons $\widetilde{\AC}$ l'involution sur $\widetilde{\RA}$.
Comme $\r_\ell$ com\-mute à l'induction et à la restric\-tion 
paraboliques (voir \cite[\S1.2]{MSc}), on a l'égalité~: 
\begin{equation*}
\label{descenso}
\AC\circ\r_{\ell}= \r_{\ell}\circ\widetilde{\AC}.
\end{equation*}
D'après \cite[Théorème 9.39]{MSc} dans le cas $p$-adique 
et \cite[Lemme 5.9]{MSf} dans le cas fini, 
la réduction mod $\ell$ de $\Z(\widetilde{\s},n)$ est égale à 
$\Z(\s,n)$.
En réalité, les preuves de ces deux résultats comportent une 
omission, que nous allons corriger ici. 
Ces preuves montrent toutes les deux que la réduction mod $\ell$ de 
$\Z(\widetilde{\s},n)$ est irréductible (notons-la $\pi$) et que, 
pour toute composition $\a=(mk,m(n-k))$ avec $m=\deg(\s)$ et 
$k\in\{1,\dots,n-1\}$, on a 
$\rp_{\a}(\pi)=\Z(\s,k)\otimes\Z(\s\nu_{\s}^k,m-k)$.
Mais ceci ne suffit pas (lorsque $q_\s$ est congru à $1$ modulo $\ell$)
pour en déduire que $\pi$ est égale à $\Z(\s,n)$.
Expliquons, dans le cas fini, pourquoi on a cette égalité~; 
l'argument est similaire dans le cas $p$-adique. 

Soit $\L$ un $\zlb$-réseau de $\Z(\widetilde{\s},n)$ stable par $\G$, 
et soit $\L_{\widetilde{\s}}$ un $\zlb$-réseau de $\widetilde{\s}$ 
stable par $\G_{m}$.
La réduction de $\L_{\widetilde{\s}}$ étant isomorphe à $\s$ et celle de $\L$ 
étant isomorphe à $\pi$, on obtient un morphisme naturel~: 
\begin{equation*}
\Hom_{\G}(\L_{\widetilde{\s}}^{\times n},\L)
\otimes\flb \to \Hom_{\G}(\s^{\times n},\pi)
\end{equation*}
de $\flb$-espaces vectoriels.
C'est même un morphisme de $\Hh(\s,n)$-modules, si le membre de gauche 
est muni d'une structure de $\Hh(\s,n)$-modules grâce à l'isomorphisme 
naturel d'algèbres~:
\begin{equation*}
\End_\G(\L_{\widetilde{\s}}^{\times n})\otimes\flb \to \Hh(\s,n).
\end{equation*}
On voit ainsi que $\FM(\pi)$ contient la réduction mod $\ell$ 
du caractère $\EuScript{Z}(\widetilde{\s},n)$ de $\Hh(\widetilde{\s},n)$,
qui est égal à $\EuScript{Z}(\s,n)$.
On en déduit que $\pi$ contient (donc est isomorphe à) $\Z(\s,n)$.

Reprenons la preuve du lemme \ref{Drl}.
D'après ce qu'on sait en caractéristique nulle, 
$\widetilde{\AC}(\Z(\widetilde{\s},n))$ est égal à 
$(-1)^{n}\cdot\L(\widetilde{\s},n)$.
On en déduit le résultat voulu en appliquant $\r_\ell$. 
\end{proof}


\subsection{Le cas unipotent}
\label{CasUnip}

Dans ce paragraphe, on suppose que $\s$ est un caractère non ramifié de 
$\mult\F$, noté $\chi$.
Pour tout $n\>1$, la représentation $\Z(\chi,n)$ est donc le 
$\flb$-caractère non ramifié $\chi\circ\det$ de $\GL_n(\F)$. 

\begin{lemm}
\label{Lemmeunipotent}
Pour tout $n\>1$, on a $\FM(\AC(\Z(\chi,n)))=(-1)^n\cdot\EuScript{L}(\chi,n)$. 
\end{lemm}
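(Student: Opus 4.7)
The plan is to reduce the computation to a characteristic-zero version via Lemma \ref{Drl}, and then to transport the answer back through reduction modulo $\ell$.

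Since $\chi$ is an unramified character of $\mult\F$, it admits an integral $\zlb$-valued lift $\widetilde\chi$: a Teichm\"uller lift of its value on a chosen uniformizer works in the $p$-adic case, and any character of the cyclic group $\mult\F$ lifts in the finite case. Lemma \ref{Drl} applied to $\s = \chi$ and $\widetilde\s = \widetilde\chi$ then yields
\begin{equation*}
\AC(\Z(\chi,n)) = (-1)^n \cdot \r_\ell(\L(\widetilde\chi,n)),
\end{equation*}
so by applying $\FM$ the statement reduces to the identity
\begin{equation*}
\FM(\r_\ell(\L(\widetilde\chi,n))) = \EuScript{L}(\chi,n)
\end{equation*}
in $\MA_\chi$. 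Since, by the very definition of $\L(\chi,n)$ recalled in \S\ref{ZL}, one has $\FM(\L(\chi,n)) = \EuScript{L}(\chi,n)$, this is equivalent to the containment $\r_\ell(\L(\widetilde\chi,n)) - \L(\chi,n) \in \JA_\chi$.

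I would establish this in two steps. First, the constituent $\L(\chi,n)$ appears in the reduction with multiplicity exactly one, as follows from an integral lattice argument. In characteristic zero $\widetilde\FM(\L(\widetilde\chi,n)) = \EuScript{L}(\widetilde\chi,n)$ is the one-dimensional Hecke character whose defining formulas in \S\ref{ZL} involve only the integer $-1$ and integer powers of $\qr$, both $\zlb$-integral and reducing modulo~$\ell$ to the formulas for $\EuScript{L}(\chi,n)$. Choosing a $\G$-stable $\zlb$-lattice $\L_0 \subset \L(\widetilde\chi,n)$ (for instance the image of an integral model of $\widetilde\chi^{\times n}$ under the canonical surjection onto $\L(\widetilde\chi,n)$) and a $\zlb$-integral model $\Si^{\times n}_{\zlb}$ of $\Si^{\times n}$ (obtained by lifting the simple type $(\J,\l)$ to $\zlb$ and iterating compact and parabolic induction), the $\zlb$-module $\Hom_{\zlb[\G]}(\Si^{\times n}_{\zlb},\L_0)$ is a rank-one $\zlb$-lattice inside $\EuScript{L}(\widetilde\chi,n)$ whose reduction modulo~$\ell$ is the one-dimensional $\flb$-Hecke character $\EuScript{L}(\chi,n)$. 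Exactness of $\FM$ on $\Ee(\chi,n)$ (Theorem \ref{qptf}) and the fact that $\L_0\otimes\flb$ lies in this subcategory (being a quotient of $\chi^{\times n}$, parabolic induction being compatible with reduction) then force $\L(\chi,n)$ to occur in $\r_\ell(\L(\widetilde\chi,n))$ with multiplicity one.

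The main obstacle is the second step: proving that every other Jordan--H\"older constituent of $\r_\ell(\L(\widetilde\chi,n))$ lies in $\JA_\chi$. In the non-banal case such extra constituents are genuinely present, and one must show that their cuspidal supports necessarily involve some higher-degree cuspidal $\st_k(\chi)$ with $k \geq 0$, hence lie outside $\Dive(\Om_\chi)$ and are annihilated by $\FM$. This amounts to a cuspidal-support analysis of the reduction of a Steinberg-type representation; the compatibility of parabolic restriction with reduction modulo~$\ell$ combined with the classification of cuspidal representations from \S\ref{DefRepCusp} (Theorem \ref{AppCuspSuper}) should yield the required statement, but this is the delicate point of the proof.
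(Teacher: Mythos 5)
Your first reduction is exactly the paper's: twist so that $\chi=1$, apply Lemma~\ref{Drl} to reduce to computing $\FM(\r_\ell(\L(\widetilde 1,n)))$, where $\L(\widetilde 1,n)$ is the $\qlb$-Steinberg representation. From that point, however, your strategy diverges and runs into a genuine gap. You propose to show separately (a) that $\L(\chi,n)$ appears with multiplicity one in the reduction, and (b) that every other Jordan--H\"older constituent lies in $\JA_\chi$, and you explicitly acknowledge that you do not know how to prove (b). This is the missing idea, and moreover your sketch of (a) is itself incomplete: you assert that $\Hom_{\zlb[\G]}(\Si^{\times n}_{\zlb},\L_0)\otimes\flb$ maps isomorphically onto $\Hom_{\G}(\Si^{\times n},\L_0\otimes\flb)$, but the natural map $\Hom(\ldots)\otimes\flb\to\Hom(\ldots\otimes\flb)$ has no reason to be surjective in general; controlling its cokernel is precisely the content of the cohomological argument you are missing.

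The paper avoids the constituent-by-constituent analysis entirely. Taking a $\G$-stable $\zlb$-lattice $\L$ in the $\qlb$-Steinberg $\V$ and applying the Iwahori-invariants functor $\FM=(-)^\I$ to the exact sequence $0\to\mathfrak p\L\to\L\to\overline\L\to 0$, one obtains
\begin{equation*}
0 \to (\mathfrak p\L)^\I \to \L^\I \to \overline\L^\I \to \H^1(\I,\mathfrak p\L),
\end{equation*}
and the decisive step is to prove $\H^1(\I,\mathfrak p\L)=0$. This follows from inflation-restriction: since $\I_1$ is a normal pro-$p$ subgroup of $\I$ and $p$ is invertible, $\H^1(\I,\mathfrak p\L)\simeq\H^1(\I/\I_1,(\mathfrak p\L)^{\I_1})$; by exactness of $\I_1$-invariants, $(\mathfrak p\L)^{\I_1}=\mathfrak p\L^{\I_1}$ is a free rank-one $\zlb$-module on which $\I/\I_1$ acts trivially (because $\V^{\I_1}$ is one-dimensional and $\I$-trivial), so the $\H^1$ reduces to $\Hom(\I/\I_1,\mathfrak p\L^{\I_1})=0$, the source being a finite abelian group and the target torsion-free. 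One concludes $\overline\L^\I\simeq\L^\I\otimes\flb$, which is one-dimensional, so $\FM(\r_\ell(\L(\widetilde 1,n)))$ is the simple module $\EuScript L(1,n)$ and the lemma follows without any need to locate the other constituents of the reduction. This vanishing argument is the idea you would need to supply to complete both halves of your sketch, and once you have it, your two-step decomposition becomes superfluous.
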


\begin{proof}
Quitte à tordre par $\chi$, on peut supposer que $\chi$ est le caractère trivial 
de $\mult\F$, noté $1$.
Soit $\V$ la $\qlb$-représentation de Steinberg de $\G$.
D'après le lemme \ref{Drl}, il suffit de montrer que $\FM(\r_\ell(\V))$ est 
égal à $\EuScript{L}(1,n)$.
Fixons un $\zlb$-réseau $\L$ de $\V$ stable par $\G$.
On a une suite exacte de $\zlb\G$-modules~:
\begin{equation}
\label{SECML}
0 \to \mathfrak{p}\L \to \L \to \overline{\L} = \L\otimes\flb \to 0
\end{equation}
où $\mathfrak{p}$ désigne l'idéal maximal de $\zlb$.
Notons $\I$ le sous-groupe d'Iwahori standard de $\G$,
et notons $\I_1$ son radical pro-unipotent, qui est un pro-$p$-sous-groupe 
distingué de $\I$. 
Ainsi $\FM$ est le foncteur des vecteurs 
$\I$-invariants. 
En l'appliquant à \eqref{SECML}, on trouve une suite exacte~: 
\begin{equation*}
0 \to (\mathfrak{p}\L)^{\I} \to\L^\I \to \overline{\L}^\I \to \H^1(\I,\mathfrak{p}\L)
\end{equation*}
où $\H^1(\I,\mathfrak{p}\L)$ désigne le premier groupe de cohomologie continue 
de $\I$ à coefficients dans $\mathfrak{p}\L$.
On va montrer qu'il est nul.
Comme $\I_1$ est un pro-$p$-sous-groupe distingué de $\I$,
la suite d'inflation-restriction induit un isomorphisme~:
\begin{equation*}
\H^1(\I,\mathfrak{p}\L) \simeq \H^1(\I/\I_1,(\mathfrak{p}\L)^{\I_1})
\end{equation*}
de $\zlb$-modules. 
Appliquant le foncteur exact des $\I_1$-invariants à \eqref{SECML}, on 
trouve que $(\mathfrak{p}\L)^{\I_1}$ est égal à $\mathfrak{p}\L^{\I_1}$,
qui est un $\zlb$-réseau de $\V^{\I_1}$.
Mais $\V^{\I_1}$ est de dimension $1$ sur $\qlb$ et le groupe $\I$ agit 
trivialement dessus.
Ainsi, on a~:
\begin{equation*}
\H^1(\I/\I_1,(\mathfrak{p}\L)^{\I_1}) = \Hom(\I/\I_1,\mathfrak{p}\L^{\I_1})
\end{equation*}
qui est trivial car le quotient $\I/\I_1$ est un groupe abélien fini et 
$\mathfrak{p}\L^{\I_1}$ un $\zlb$-module libre.
Finalement, comme on a aussi $ (\mathfrak{p}\L)^{\I}=\mathfrak{p}\L^{\I}$, 
on obtient un isomorphisme naturel~:
\begin{equation*}
\L^\I\otimes\flb \simeq \overline{\L}^\I
\end{equation*}
de $\flb$-espaces vectoriels, et même de $\Hh(1,n)$-modules. 
Le membre de gauche étant la ré\-duc\-tion mod $\ell$ du $\qlb$-caractère 
$\EuScript{L}(\widetilde{1},n)$, 
où $\widetilde{1}$ désigne le $\qlb$-caractère trivial de $\mult\F$, 
le résultat s'ensuit. 
\end{proof}

\subsection{Le cas général}
\label{CasG}

Le corps $\R$ est à nouveau quelconque, de caractéristique $\ell\neq p$.
Le théorème suivant implique et précise le théorème \ref{ASG}.

\begin{theo}
\label{PRECISION}
Soit $\s$ une représentation irréductible cuspidale de degré $m\>1$.
Soit $n\>1$ et soit $\pi\in\Irr^*_{\s,n}$. 
Il y a une unique repré\-sentation irréductible $\pi^\star$
de même support cuspidal que $\pi$ telle que~: 
\begin{equation*}
\AC(\pi)- (-1)^n \cdot \pi^\star
\end{equation*}
ne contienne pas de terme irréductible de même support cuspidal que $\pi$~;
le module simple $\FM(\pi^\star)$ est égal à $\inv(\FM(\pi))$.
\end{theo}

\begin{proof}
Pour $n\>1$ et tout $\Hh(\s,n)$-module simple $\m$, 
posons $\tm(\m)=(-1)^{n}\cdot\inv(\m)$. 
Prolongeant par linéarité, on obtient un automorphisme involutif d'algèbre 
sur $\MA_\s$, encore noté $\tm$.
Pour prouver que les isomorphismes d'algèbres $\fm\circ\ac$ et 
$\tm\circ\fm$ sont égaux, il suffit de prouver qu'ils coïncident sur un 
système générateur de $\AA_\s$.

Supposons d'abord que $\s$ est le $\flb$-caractère trivial de $\mult\F$.
D'après le lemme \ref{Lemmeunipotent}, le théorème \ref{basestd} 
et le fait que $\fm\circ\ac$ et $\tm\circ\fm$ sont des morphismes d'algèbres, 
on a~:
\begin{equation*}
\fm\circ\ac(\pi)=\tm\circ\fm(\pi)
\end{equation*}
pour tout $n\>1$ et tout $\pi\in\XA_{1,n}^*$. 
Par extension des scalaires de $\flb$ à $\R$, 
on a le même ré\-sul\-tat quand $\s$ est le 
$\R$-caractère trivial de $\mult\F$.
Par conséquent, le théorème \ref{PRECISION} est vrai lorsque $\s$ est le 
$\R$-caractère trivial de $\mult\F$, et la re\-pré\-sen\-tation 
$\Z(1,n)^\star$ est égale à $\L(1,n)$ pour tout $n\>1$.

Passons maintenant au cas général. 
Fixons une extension finie $\F'$ de $\F$ dont le cardinal 
(celui du corps résiduel dans le cas $p$-adique) 
est égal à $q_{\s}$ et notons $\s'$ le caractère trivial de 
$\F^{\prime\times}$.
On a donc $q_{\s'}=q_{\s}$, \ie qu'on est dans les conditions du paragraphe 
\ref{GreatExpectations}.
D'après le corollaire \ref{COROCHGTGP} et ce qui précède, 
la première partie du théorème \ref{PRECISION} est donc vraie.
Soit maintenant $\pi$ dans $\Irr^*_{\s,n}$ et notons $\pi'$ son image par 
$\boldsymbol{\Phi}$ (paragraphe \ref{GreatExpectations}). 
D'après la proposition \ref{ChGrFD}, on a~:
\begin{equation*}
\FM(\pi^\star) 
= (-1)^n\cdot\FM(\AC(\pi)) 
= (-1)^n\cdot\FM'(\AC'(\pi')) 
= \FM'(\pi^{\prime\star}).
\end{equation*}
Ce dernier est égal à $\inv(\FM'(\pi'))$ d'après le théorème \ref{PRECISION} 
appliqué à $\s'$, et le résultat suit du fait que $\FM'(\pi')$ est égal à 
$\FM(\pi)$. 
\end{proof}


\begin{prop}
\label{COMPLEMENT}
Soit $\s$ une représentation irréductible cuspidale,
et soit $n\>1$.
\begin{enumerate}
\item
On a $\Z(\s,n)^\star=\L(\s,n)$.
\item
Supposons que $\R$ soit égal à $\flb$ 
et qu'il y ait une $\qlb$-représentation irréductible cuspidale 
entière $\widetilde{\s}$ dont la réduc\-tion mod $\ell$ soit égale à $\s$. 
Alors $\r_\ell(\L(\widetilde{\s},n))-\L(\s,n)$ appartient à $\JA_\s$, 
\ie qu'il ne contient aucun terme irréductible de support cuspidal 
$\s+\s\nu_\s^{}+\dots+\s\nu_{\s}^{n-1}$.
\end{enumerate}
\end{prop}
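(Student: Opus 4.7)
L'assertion (1) se d�duit directement du th�or�me \ref{PRECISION} appliqu� � la repr�sentation $\pi=\Z(\s,n)$, qui appartient � $\XA_{\s,n}^{*}$. Celui-ci donne
\begin{equation*}
\FM(\Z(\s,n)^\star)=\inv(\FM(\Z(\s,n)))=\inv(\EuScript{Z}(\s,n)).
\end{equation*}
Une inspection directe des formules \eqref{invf} et \eqref{invp} d�finissant $\inv$, compar�es aux formules d�finissant les caract�res $\EuScript{Z}(\s,n)$ et $\EuScript{L}(\s,n)$ au paragraphe \ref{ZL}, montre que $\inv$ �change ces deux caract�res~--- fait d�j� signal� dans le texte juste apr�s leur d�finition. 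On a donc $\FM(\Z(\s,n)^\star)=\EuScript{L}(\s,n)=\FM(\L(\s,n))$, et la bijectivit� de $\FM$ entre $\XA_{\s,n}^{*}$ et les modules simples sur $\Hh(\s,n)$ (th�or�me \ref{qptf}) entra�ne $\Z(\s,n)^\star=\L(\s,n)$.

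Pour prouver (2), on combine (1), le lemme \ref{Drl} et la propri�t� caract�ristique de l'op�ration $\star$ fournie par le th�or�me \ref{PRECISION}. Le lemme \ref{Drl} affirme que $\AC(\Z(\s,n))=(-1)^n\cdot\r_\ell(\L(\widetilde{\s},n))$ dans $\RA$. Par ailleurs, le th�or�me \ref{PRECISION} appliqu� � $\pi=\Z(\s,n)$ donne $\AC(\Z(\s,n))-(-1)^n\cdot\Z(\s,n)^\star\in\JA_\s$, et (1) permet de remplacer $\Z(\s,n)^\star$ par $\L(\s,n)$. En multipliant par $(-1)^n$, on obtient $\r_\ell(\L(\widetilde{\s},n))-\L(\s,n)\in\JA_\s$, ce qui est exactement l'�nonc� voulu.

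Il n'y a pas d'obstacle substantiel dans ce raisonnement : la proposition est essentiellement une reformulation, pour les repr�sentations associ�es � des segments, des r�sultats d�j� �tablis dans cette section. La seule v�rification non imm�diate est l'identit� $\inv(\EuScript{Z}(\s,n))=\EuScript{L}(\s,n)$, qui r�sulte d'un calcul �l�mentaire � partir des d�finitions~: la substitution $\SS_i\mapsto -\SS_{n-i}+\qr-1$ envoie la valeur propre $\qr$ sur $-1$, et la substitution $\X_j\mapsto\X_{n+1-j}$ envoie $\qr^{j-1}$ sur $\qr^{n-j}$.
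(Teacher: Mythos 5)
Your proof is correct and follows the same route as the paper: part (1) is deduced from Theorem \ref{PRECISION} together with the fact that $\bt$ exchanges $\EuScript{Z}(\s,n)$ and $\EuScript{L}(\s,n)$ (already noted after their definition), and part (2) combines Lemma \ref{Drl}, Theorem \ref{PRECISION}, and part (1). You merely spell out the intermediate steps that the paper leaves implicit.
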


\begin{proof}
Le $\Hh(\s,n)$-module $\inv(\EuScript{Z}(\s,n))$ étant égal à 
$\EuScript{L}(\s,n)$
et compte tenu de la défi\-nition de $\Z(\s,n)$ et $\L(\s,n)$, 
le théorème \ref{PRECISION} implique que $\Z(\s,n)^\star$ est égal à 
$\L(\s,n)$.
La se\-con\-de partie de la proposition suit de la conjonction du lemme 
\ref{Drl} et du théorème \ref{PRECISION}.
\end{proof}

\begin{rema}
D'après \cite[Remarque 2.10]{MSf}, toute $\flb$-représentation irréductible 
cuspidale se relève à $\qlb$ dans le cas fini, \ie que la condition de la 
proposition \ref{COMPLEMENT} est toujours remplie dans ce cas.
Dans le cas $p$-adique en revanche, il y a des $\flb$-représentations 
irréductibles cuspidales qui ne se relèvent pas à $\qlb$
(voir \cite[Exemple 3.31]{MSt}).
Cependant~:
\begin{enumerate}
\item
toute $\flb$-représentation irréductible supercuspidale se 
relève (\cite[Théorème 6.11]{MSc})~;
\item
toute $\flb$-représentation irréductible cuspidale se relève 
quand $\D=\F$ (\cite[Section 3]{MSt}).
\end{enumerate}
Dans un article ultérieur \cite{VScomptage}, 
on donnera une condition nécessaire et 
suffisante de relèvement en termes d'invariants numériques attachés 
à $\s$.
\end{rema}

La proposition suivante est un cas particulier -- le cas des algèbres de 
Hecke affines de type $\textsf{A}$ -- d'une formule de Kato 
\cite[Theorem 2]{Kato}. 

\begin{prop}
\label{ForKato}
Soit $\s$ une représentation irréductible cuspidale et soit $n\>1$.
Pour tout $\Hh(\s,n)$-module $\m$ de longueur finie, 
on a l'égalité~:
\begin{equation*}
\sum\limits_{\g} (-1)^{n-r(\g)}\cdot{\bf i}_\g\circ{\bf r}_\g(\m) = \inv(\m)
\end{equation*}
dans le groupe de Grothendieck $\MA_\s$, où $\g$ décrit les compositions de
$n$. 
\end{prop}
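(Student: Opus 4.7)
The plan is to derive the formula as a direct consequence of Theorem \ref{PRECISION} and Lemma \ref{calculdelta}, which together have been engineered precisely so that the identity of Kato emerges on the Hecke-algebra side of the equivalence~\eqref{isoAAfMA}.

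First, observe that both sides of the claimed equality are additive for short exact sequences of $\Hh(\s,n)$-modules: the functors ${\bf i}_\g$ and ${\bf r}_\g$ are exact (paragraph \ref{hecke}), and $\inv$, being a twist of the module structure by the algebra involution $\bt$, is exact as well. Consequently, in the Grothendieck group $\MA_\s$ it suffices to establish the identity when $\m$ is a simple $\Hh(\s,n)$-module.

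Second, given a simple $\Hh(\s,n)$-module $\m$, the bijection of Theorem \ref{qptf} yields a unique $\pi\in\XA_{\s,n}^{*}$ with $\FM(\pi)=\m$. Applying Lemma \ref{calculdelta}(2) to $\pi$ gives
\begin{equation*}
\FM(\AC(\pi))=\sum_{\g}(-1)^{r(\g)}\cdot{\bf i}_\g\circ{\bf r}_\g(\m),
\end{equation*}
the sum running over compositions $\g$ of $n$. On the other hand, Theorem \ref{PRECISION} asserts that $(-1)^{n}\cdot\FM(\AC(\pi))=\inv(\FM(\pi))=\inv(\m)$. Multiplying the previous identity by $(-1)^{n}$ and using $(-1)^{n+r(\g)}=(-1)^{n-r(\g)}$ yields the stated formula
\begin{equation*}
\sum_{\g}(-1)^{n-r(\g)}\cdot{\bf i}_\g\circ{\bf r}_\g(\m)=\inv(\m).
\end{equation*}

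There is no substantive obstacle here: all the real work has already been done in Theorem \ref{PRECISION} (via the reduction to the unipotent case in paragraph \ref{CasUnip} and the change-of-group argument of paragraph \ref{GreatExpectations}). The only point worth checking is that the bijection $\FM$ identifies the involution $\ac$ on $\AA_\s$ with $\tm=(-1)^{n}\cdot\inv$ on $\MA_\s$ exactly in the form needed, which is guaranteed by the compatibility $\FM(\pi^\star)=\inv(\FM(\pi))$ recorded at the end of Theorem \ref{PRECISION}.
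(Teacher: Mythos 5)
Your proof is correct and follows exactly the route the paper itself indicates (the paper's own proof is a one-liner citing Lemma \ref{calculdelta} and Theorem \ref{PRECISION}); you have simply filled in the two standard intermediate steps the paper leaves implicit — the reduction to simple modules by exactness of ${\bf i}_\g$, ${\bf r}_\g$ and $\inv$, and the sign bookkeeping $(-1)^{n+r(\g)}=(-1)^{n-r(\g)}$.
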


\begin{proof}
C'est une conséquence du lemme \ref{calculdelta} et du 
théorème \ref{PRECISION}.
\end{proof}

\begin{rema}
On reprend les notations de la section \ref{SEC3}. 
Le foncteur $\bt\circ\FM$ s'identifie à~:
\begin{equation*}
\pi\mapsto\Hom_\G(\Q^{\inv},\pi)
\end{equation*}
où $\Q^{\inv}$ est égal à $\Q$ en tant que représentation de $\G$,
et au $\Hh$-module à gauche $\Q$ tordu par $\inv$ 
en tant que $\Hh$-module à gauche.
Peut-on déterminer explicitement la structure du bimodule $\Q^{\inv}$~? 
\end{rema}

\subsection{Complément}

Nous profitons de l'appareil technique mis en place dans cet article pour 
répondre à une~ques\-tion de B.~Leclerc 
concernant les multiplicités des représentations irréductibles dans les 
re\-pré\-sen\-tations standard du théorème \ref{basestd}.

Supposons dans ce paragraphe qu'on est dans le cas $p$-adique.
Un \textit{segment formel} est une~pai\-re $[a,b]$ formée de deux entiers 
$a,b\in\ZZ$ tels que $a\<b$.
L'entier $b-a+1$ est appelé la \textit{longueur} de ce segment formel.
Si $[a,b]$ est un segment formel et si $\s$ est une représentation 
irréductible cuspidale, on pose~:
\begin{equation*}
[a,b]\boxtimes\s=[\s\nu_{\s}^{a},b-a+1]
\end{equation*}
qui est un segment au sens de la définition \ref{DEFSEG}.
Notant $\Dd$ l'ensemble des segments formels, 
on ob\-tient par linéarité une application $\upmu\mapsto\upmu\boxtimes\s$ 
de $\Dive(\Dd)$ dans $\Dive(\Dd_{\s})$.
Etant donné~:
\begin{equation}
\label{MSF}
\upmu=[a_1,b_1]+\dots+[a_r,b_r]\in\Dive(\Dd),
\end{equation}
le mutisegment $\upmu\boxtimes\s$ est dit \textit{apériodique} si 
$\R$ est de caractéristique $0$ ou si, 
pour tout $n\>1$, il y a un $k\in\ZZ$ tel que le segment 
$[\s\nu_\s^k,n]$ n'apparaît pas dans $\upmu\boxtimes\s$
(voir \cite[Définition 9.7]{MSc}).

Nous avons construit dans \cite{MSc} une application surjective $\Z$ de 
$\Dive(\Dd_{\s})$ dans $\XA_{\s}$, qui est bijec\-tive quand $\s$ est 
supercuspidale et qui 
coïncide avec la classifica\-tion de Zele\-vin\-ski 
quand $\R$ est le corps des nombres complexes.
Pour $\upmu,\upnu\in\Dive(\Dd)$, 
notons $\textsf{m}(\upmu,\upnu,\s)$ la multiplicité~de 
$\Z(\upnu\boxtimes\s)$ dans la représentation standard~:
\begin{equation}
\label{RSTD}
\Z([a_1,b_1]\boxtimes\s)\times\dots\times\Z([a_r,b_r]\boxtimes\s)
\end{equation}
où l'on écrit $\upmu$ comme en \eqref{MSF}.
D'après \cite[Théorème 9.36]{MSc}, 
on a $\Z(\upnu\boxtimes\s)\in\XA_{\s}^*$ si et seulement si 
$\upnu\boxtimes\s$ est apériodique.

Soient $\upmu$ comme en \eqref{MSF} et 
$\upnu=[c_1,d_1]+\dots+[c_s,d_s]\in\Dive(\Dd)$, 
dont les segments formels sont supposés être rangés par longueur décroissante. 
On écrit $\upmu\trianglelefteq\upnu$ 
lorsque~: 
\begin{equation*}
\sum\limits_{i\<k}(b_i-a_i+1)\<\sum\limits_{i\<k}(d_i-c_i+1)
\end{equation*}
pour tout $k\>1$. 
Ceci définit une relation d'ordre sur l'ensemble $\Dive(\Dd)$.

\begin{prop}
\label{multi10}
On a ${\sf m}(\upmu,\upmu,\s)=1$ et, 
si ${\sf m}(\upmu,\upnu,\s)\neq0$ 
alors $\upmu\trianglelefteq\upnu$.
\end{prop}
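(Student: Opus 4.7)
Le plan est de ramener ce calcul de multiplicit\'es au cas complexe, suivant la m\^eme strat\'egie en trois \'etapes que celle employ\'ee pour prouver le th\'eor\`eme~\ref{PRECISION}~: transfert dans $\MA_\s$ via $\FM$, changement de groupe pour se ramener \`a $\s$ trivial, puis rel\`evement \`a $\qlb$.

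\textbf{Premi\`ere \'etape.} D'apr\`es le corollaire~\ref{Tet}, $\FM$ est un morphisme d'alg\`ebres qui envoie la repr\'esentation standard~\eqref{RSTD} sur le produit $\FM(\Z([a_1,b_1]\boxtimes\s))\times\dots\times\FM(\Z([a_r,b_r]\boxtimes\s))$ dans $\MA_\s$. Lorsque $\upnu\boxtimes\s$ est ap\'eriodique, $\Z(\upnu\boxtimes\s)\in\XA_\s^*$ correspond via $\fm$ \`a un module simple et $\textsf{m}(\upmu,\upnu,\s)$ est la multiplicit\'e de ce module simple dans le produit ci-dessus. Dans le cas non ap\'eriodique, $\Z(\upnu\boxtimes\s)\in\JA_\s$ se d\'ecompose selon le th\'eor\`eme~\ref{DEC} sous la forme $\pi_{-1}\times\pi_0\times\pi_1\times\cdots$, et le calcul se ram\`ene, bloc cuspidal par bloc cuspidal, \`a des multiplicit\'es analogues dans les alg\`ebres $\MA_{\st_i(\s)}$, dont les diff\'erents supports cuspidaux for\c{c}ent la multiplicit\'e \`a \^etre nulle sauf dans les configurations compatibles.

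\textbf{Deuxi\`eme \'etape.} L'argument sous-tendant la proposition~\ref{ChGrFD} (fond\'e sur le lemme~\ref{calculdelta}) s'\'etend aux produits: pour toute repr\'esentation cuspidale $\s'$ v\'erifiant $q_{\s'}=q_\s$, la bijection $\boldsymbol{\Phi}$ entre $\XA_\s^*$ et $\XA_{\s'}^*$ est compatible aux produits dans l'alg\`ebre commune $\MA=\MA_\s=\MA_{\s'}$ et pr\'eserve donc la multiplicit\'e $\textsf{m}$. On se ram\`ene ainsi au cas unipotent o\`u $\s$ est le caract\`ere trivial $1$ de $\F^{\prime\times}$ pour une extension finie convenable $\F'$ de $\F$ de cardinal r\'esiduel $q_\s$.

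\textbf{Troisi\`eme \'etape.} Le caract\`ere trivial admet un rel\`evement \'evident $\widetilde{1}$ \`a $\qlb$. Comme $\r_\ell$ commute \`a l'induction parabolique et envoie $\Z(\widetilde{1},n)$ sur $\Z(1,n)$ (preuve du lemme~\ref{Drl}), la r\'eduction mod $\ell$ de la repr\'esentation standard complexe associ\'ee \`a $\upmu$ est la repr\'esentation standard mod $\ell$ associ\'ee \`a $\upmu$. La th\'eorie classique de Zelevinski~\cite{Ze1} fournit en caract\'eristique nulle exactement les propri\'et\'es voulues~: multiplicit\'e~$1$ sur la diagonale et triangularit\'e sup\'erieure pour $\trianglelefteq$. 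Comme les coefficients $\widetilde{\textsf{m}}(\upmu,\upnu,\widetilde{1})$ sont des entiers positifs valant $1$ sur la diagonale et nuls hors du c\^one $\upmu\trianglelefteq\upnu$, la m\^eme propri\'et\'e subsiste apr\`es r\'eduction modulo $\ell$, puis pour $\s$ quelconque gr\^ace \`a la deuxi\`eme \'etape.

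\textbf{Obstacle principal.} La difficult\'e essentielle sera de s'assurer que $\r_\ell$ se comporte correctement vis-\`a-vis de la classification par $\Z$ au niveau des multisegments quelconques, et pas seulement des segments individuels trait\'es par le lemme~\ref{Drl}~: il faut v\'erifier que $\r_\ell(\Z(\widetilde{\upnu}\boxtimes\widetilde{1}))=\Z(\upnu\boxtimes 1)$, \`a d\'efaut de quoi la multiplicit\'e~$1$ sur la diagonale pourrait \^etre alt\'er\'ee. Ce point se traitera en examinant la d\'efinition de $\Z$ via la bijection de Hecke-module~: comme $\FM$ commute \`a $\r_\ell$ au sens appropri\'e, et comme les caract\`eres de type $\EuScript{Z}$ de $\widetilde{\Hh}$ se r\'eduisent sur les caract\`eres correspondants de $\Hh$, on en d\'eduira l'identit\'e souhait\'ee.
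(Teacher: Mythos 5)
The paper's proof of this proposition is a one-line citation to \cite[Proposition~9.19]{MSc}: the upper-triangularity (with~$1$ on the diagonal) of the multiplicity matrix with respect to~$\trianglelefteq$ is a structural feature of the classification map~$\Z$ already established in earlier work, and it is used here as an input rather than re-derived. Your proposal attempts instead to re-prove it from scratch using the machinery of the present paper, and it runs into two genuine gaps.

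First, the transfer via~$\FM$ in your first step loses exactly the information you need in the non-aperiodic case: $\FM$ has kernel~$\JA_\s$, so for~$\upnu$ such that~$\upnu\boxtimes\s$ is not aperiodic, $\FM(\Z(\upnu\boxtimes\s))=0$ and the multiplicity~$\textsf{m}(\upmu,\upnu,\s)$ is invisible on the Hecke-algebra side. The paper itself points this out in the remark following Proposition~\ref{Mmunu} ("$\FM(\Z(\upnu\boxtimes\s))$ est nul quand $\upnu\boxtimes\s$ n'est pas ap\'eriodique"), explicitly flagging that the change-of-group method does not extend beyond the aperiodic case. Your sketch of a reduction "bloc cuspidal par bloc cuspidal" via Theorem~\ref{DEC} does not fill this hole: the standard representation~\eqref{RSTD} does not factor through that decomposition.

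Second, the step "la m\^eme propri\'et\'e subsiste apr\`es r\'eduction modulo~$\ell$" is unjustified, and the remedy you propose under "Obstacle principal" is actually false. The identity $\r_\ell(\Z(\widetilde{\upnu}\boxtimes\widetilde{1}))=\Z(\upnu\boxtimes 1)$ holds for a single segment (that is Lemma~\ref{Drl}) but fails for general multisegments: the mod-$\ell$ reduction of an irreducible $\qlb$-representation $\Z(\widetilde\upnu)$ is typically a non-trivial sum of irreducibles, so one gets
\begin{equation*}
\textsf{std}(\upmu)
=\sum_{\upnu}\widetilde{\textsf{m}}(\upmu,\upnu,\widetilde 1)\,\r_\ell(\widetilde{\Z}(\widetilde\upnu)),
\end{equation*}
and to deduce unitriangularity for~$\textsf{m}$ from unitriangularity for~$\widetilde{\textsf{m}}$ you need the decomposition map $\widetilde{\Z}(\widetilde\upnu)\mapsto\r_\ell(\widetilde{\Z}(\widetilde\upnu))$ itself to be unitriangular with respect to~$\trianglelefteq$. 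That assertion is essentially equivalent to the statement you are trying to prove, so the argument as written is circular. Making it non-circular requires the deeper structural input from \cite{MSc} (or its sources, e.g.\ Ariki's theorem and its geometric counterparts), which is exactly what the paper invokes by citation.
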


\begin{proof}
C'est une conséquence de \cite[Proposition 9.19]{MSc}.
\end{proof}

Comme au paragraphe \ref{GreatExpectations}, on fixe un corps $\F'$ 
localement compact non archimédien de carac\-téristique résiduelle $p$, 
une $\F'$-algèbre à division centrale $\D'$ et un entier $m'\>1$.

\begin{prop}
\label{Mmunu}
Soient $\s$ et $\s'$\! des représentations irréductibles cuspidales 
de $\GL_{m}(\D)$ et $\GL_{m'}(\D')$ respectivement, telles que 
$q_{\s'}=q_{\s}$ dans $\R$.
Alors~:
\begin{equation*}
{\sf m}(\upmu,\upnu,\s')={\sf m}(\upmu,\upnu,\s)
\end{equation*}
pour tous $\upmu,\upnu\in\Dive(\Dd)$ tels que 
$\upnu\boxtimes\s$ -- ou de façon équivalente $\upnu\boxtimes\s'$ -- 
soit apériodique. 
\end{prop}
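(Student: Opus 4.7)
The plan is to transfer both multiplicities to a common computation inside the Hecke-module category $\MA$, by means of the change-of-group technique of Section~\ref{GreatExpectations}. First, each factor of the standard representation \eqref{RSTD} lies in $\XA_\s^*$, so the whole product lies in $\RA_\s$; and since $\upnu\boxtimes\s$ is ap\'eriodique, the target $\Z(\upnu\boxtimes\s)$ also lies in $\XA_\s^*$ by \cite[Th\'eor\`eme 9.36]{MSc}. As classes in $\JA_\s$ cannot alter the coefficient of an element of $\XA_\s^*$, the multiplicity ${\sf m}(\upmu,\upnu,\s)$ can be read off in the quotient $\AA_\s=\RA_\s/\JA_\s$. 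Applying the algebra isomorphism $\fm$ of \eqref{isoAAfMA} (Corollaire~\ref{Tet}), the image of \eqref{RSTD} becomes the product
\begin{equation*}
\FM(\Z([a_1,b_1]\boxtimes\s))\times\dots\times\FM(\Z([a_r,b_r]\boxtimes\s)),
\end{equation*}
and ${\sf m}(\upmu,\upnu,\s)$ equals the multiplicity of the simple module $\FM(\Z(\upnu\boxtimes\s))$ inside this product.

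I would then observe that, under the isomorphism \eqref{Hodkann}, both the Hecke algebra $\Hh(\s,k)$ and the character $\FM(\Z(\s\nu_\s^a,k))=\EuScript{Z}(\s\nu_\s^a,k)$ are given by formulas involving only $\qr$. Since $q_{\s'}=q_\s$ by hypothesis, the parallel construction for $\s'$ yields, under a canonical identification $\MA_\s=\MA_{\s'}=\MA$, the same character in each factor. To match the target $\Z(\upnu\boxtimes\s)$ with $\Z(\upnu\boxtimes\s')$ across the two sides, I would invoke the bijection $\boldsymbol{\Phi}:\XA_\s^*\to\XA_{\s'}^*$ of Section~\ref{GreatExpectations}, which satisfies $\FM'(\boldsymbol{\Phi}(\pi))=\FM(\pi)$: this forces $\boldsymbol{\Phi}(\Z(\upnu\boxtimes\s))=\Z(\upnu\boxtimes\s')$, both being characterised by the same simple module in $\MA$. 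Consequently, ${\sf m}(\upmu,\upnu,\s)$ and ${\sf m}(\upmu,\upnu,\s')$ coincide with a single multiplicity in $\MA$, yielding the desired equality.

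The main technical point to check is that the character $\EuScript{Z}(\s\nu_\s^a,k)$ of $\Hh(\s\nu_\s^a,k)=\Hh(\s,k)$ transports correctly to $\EuScript{Z}(\s'\nu_{\s'}^a,k)$ under the identification of Hecke algebras supplied by \eqref{Hodkann}, that is, compatibility of the chosen $\b$-extensions and of \eqref{Hodkann} with the twist by $\nu_\s$ on each segment $[a,b]\boxtimes\s$. Once this compatibility is settled, reading off \eqref{RSTD} in $\MA$ and chasing the bijections above is purely formal, so the whole argument reduces to bookkeeping inside the common Hecke-module category.
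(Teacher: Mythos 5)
Your framework is correct and is the same as the paper's: pass to the quotient algebra $\AA_\s$, then via the isomorphism $\fm$ of \eqref{isoAAfMA} read off ${\sf m}(\upmu,\upnu,\s)$ as the multiplicity $[\M_\upmu:\L_\upnu^\s]$ of the simple module $\L_\upnu^\s=\FM(\Z(\upnu\boxtimes\s))$ in the standard module $\M_\upmu=\EuScript{Z}(\s,n_1)\times\dots\times\EuScript{Z}(\s,n_r)$, the latter visibly depending only on $q_\s$. Up to that point you and the paper agree.

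The gap is in the sentence claiming that $\boldsymbol{\Phi}(\Z(\upnu\boxtimes\s))=\Z(\upnu\boxtimes\s')$, \emph{\og both being characterised by the same simple module in $\MA$\fg}. Since $\boldsymbol{\Phi}=(\FM')^{-1}\circ\FM$ on simples, this identity is by definition equivalent to $\FM(\Z(\upnu\boxtimes\s))=\FM'(\Z(\upnu\boxtimes\s'))$, i.e.\ to $\L_\upnu^\s=\L_\upnu^{\s'}$; but nothing you have said gives a description of the simple module $\L_\upnu^\s$ that is manifestly independent of $\s$ (beyond $q_\s$). The Zelevinski parametrisation $\upnu\mapsto\Z(\upnu\boxtimes\s)$ is constructed in \cite{MSc} on the representation side, not combinatorially inside $\MA$, so there is no \emph{a priori} reason its image under $\FM$ should be $\s$-independent. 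Asserting it here is precisely assuming what the proposition needs to establish. The paper fills this gap with a small but essential lemma: using the unitriangularity $\trianglelefteq$ from Proposition~\ref{multi10} (and the surjectivity of $\upnu\mapsto\Z(\upnu\boxtimes\s)$ together with \cite[Th\'eor\`eme 9.36]{MSc}), it shows that $\L_\upnu^\s$ is the \emph{unique} simple module that (a) occurs with multiplicity one in $\M_\upnu$ and (b) occurs in $\M_\upmu$ only if $\upmu\trianglelefteq\upnu$. Since this characterisation involves only the standard modules $\M_\upmu$, which you correctly observed depend only on $q_\s$, one concludes $\L_\upnu^\s=\L_\upnu^{\s'}$. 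You would need to supply an argument of this kind; the \og main technical point\fg{} you flag (compatibility of the characters $\EuScript{Z}$ with the Hecke-algebra identification) concerns only the standard modules $\M_\upmu$, which is the easy half, not the simple modules $\L_\upnu^\s$, which is where the real content lies.
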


\begin{proof}
On vérifie d'abord, grâce à \cite[Lemme 4.45]{MSt} et à l'égalité
$q_{\s'}=q_{\s}$ dans $\R$, 
que $\upnu\boxtimes\s$ est apériodique si et seulement si $\upnu\boxtimes\s'$ 
l'est. 
Soit $\upnu\in\Dive(\Dd)$, et supposons que $\upnu\boxtimes\s$ est apériodique. 
Alors~:
\begin{equation*}
\L_{\upnu}^{\s}=\FM(\Z(\upnu\boxtimes\s))
\end{equation*}
est un module simple. 
On pose~:
\begin{equation*}
\M_{\upmu} 
= \FM(\Z([a_1,b_1]\boxtimes\s)\times\dots\times\Z([a_r,b_r]\boxtimes\s))
= \EuScript{Z}(\s,n_1)\times\dots\times\EuScript{Z}(\s,n_r)
\end{equation*}
avec $n_i=b_i-a_i+1$ pour $i\in\{1,\dots,r\}$,
qui ne dépend que de $\upmu$ (et de $q_{\s}$).
On a donc~:
\begin{equation}
\label{TRANSMmunu}
{\sf m}(\upmu,\upnu,\s)=[\M_\upmu^{}:\L_{\upnu}^\s]
\end{equation}
où le membre de droite désigne la multiplicité de $\L_{\upnu}^\s$ dans 
$\M_\upmu^{}$.

\begin{lemm}
Le module 
$\L_{\upnu}^\s$ est l'unique module simple vérifiant les conditions 
suivantes~: 
\begin{enumerate}
\item 
Il apparaît avec multiplicité $1$ dans $\M_\upnu$.
\item
S'il apparaît dans $\M_\upmu$, alors $\upmu\trianglelefteq\upnu$.
\end{enumerate}
\end{lemm}

\begin{proof}
Appliquant $\FM$, la proposition \ref{multi10} implique 
que $\L_{\upnu}^\s$ apparaît avec multiplicité $1$ dans $\M_\upnu$. 
Ensuite, supposons que $\L_{\upnu}^\s$ apparaît dans $\M_\upmu$ avec 
multiplicité $k\>1$.
Alors $\Z(\upnu\boxtimes\s)$ apparaît dans \eqref{RSTD}
avec multiplicité $k$, \ie que ${\sf m}(\upmu,\upnu,\s)=k$.
De la proposition \ref{multi10}, on déduit que $\upmu\trianglelefteq\upnu$.

Soit maintenant $\L$ un module simple vérifiant les conditions en question. 
D'après le théorème \ref{qptf}, il existe une représentation irréductible 
$\pi\in\XA_{\s}^*$ telle que $\FM(\pi)$ soit égal à $\L$.
Ensuite, d'après \cite[Théorème 9.36]{MSc},
il existe un $\uplambda\in\Dive(\Dd)$ tel que $\Z(\uplambda\boxtimes\s)$ 
soit égal à $\pi$.
Par conséquent, on a $\L=\L_\uplambda^\s$.
Les conditions 1 et 2 impliquent que $\upnu\trianglelefteq\uplambda$.
Comme $\L$ apparaît dans $\M_\uplambda$, la condition 2 implique que 
$\uplambda\trianglelefteq\upnu$.
Par conséquent, on a $\uplambda=\upnu$ et le résultat s'ensuit.
\end{proof}

Il s'ensuit que $\L_{\upnu}^\s$ est égal à $\L_{\upnu}^{\s'}$ 
(qu'on note $\L_{\upnu}$), 
ce qui met fin à la preuve de la proposition.
\end{proof}

\begin{rema}
La proposition \ref{Mmunu} reste sans doute vraie 
sans l'hypothèse d'apériodicité, 
mais ceci nécessite d'autres méthodes car 
$\FM(\Z(\upnu\boxtimes\s))$ est nul quand 
$\upnu\boxtimes\s$ n'est pas apériodique. 
\end{rema}

\begin{rema}
Compte tenu de \cite{CG} et de \cite{AM}, 
on en déduit que les multiplicités \eqref{TRANSMmunu} 
ne dépendent que de $\upmu,\upnu$ et de l'entier $e(\s)$ 
défini par \eqref{DEFe}.
En particulier, dans le cas complexe, les multiplicités ne dépendent que de 
$\upmu$ et $\upnu$.
\end{rema}

\providecommand{\bysame}{\leavevmode ---\ }
\providecommand{\og}{``}
\providecommand{\fg}{''}
\providecommand{\smfandname}{\&}

\end{document}